\DeclareMathOperator{\tr}{tr}
\DeclareMathOperator{\dist}{dist}
\newcommand{\eps}{\varepsilon}
\newcommand{\Prob}{\mathbb{P}}
\newcommand{\C}{\mathbb{C}}
\newcommand{\E}{\mathbb{E}}
\newcommand{\R}{\mathbb{R}}
\renewcommand{\P}{\mathbb{P}}
\renewcommand{\Re}{\mathop{\rm Re}}
\renewcommand{\Im}{\mathop{\rm Im}}
\theoremstyle{plain}
  \newtheorem{theorem}{Theorem}
  \newtheorem{proposition}[theorem]{Proposition}
  \newtheorem{lemma}[theorem]{Lemma}
  \newtheorem{corollary}[theorem]{Corollary}
\theoremstyle{definition}
  \newtheorem{definition}[theorem]{Definition}
  \newtheorem{remark}[theorem]{Remark}
\begin{document}

\title[Random matrices with external source]{Universality of local eigenvalue statistics in random matrices with external source}

\author[S. O'Rourke]{Sean O'Rourke}
\thanks{S. O'Rourke is supported by grant AFOSAR-FA-9550-12-1-0083.}
\address{Department of Mathematics, Yale University, New Haven , CT 06520, USA  }
\email{sean.orourke@yale.edu}

\author{Van Vu}
\thanks{V. Vu is supported by research grants DMS-0901216, DMS-1307797, and AFOSAR-FA-9550-12-1-0083.}
\address{Department of Mathematics, Yale University, New Haven , CT 06520, USA}
\email{van.vu@yale.edu}

\begin{abstract}
Consider a random matrix of the form $W_n := \frac{1}{\sqrt{n}} M_n + D_n$, where $M_n$ is a Wigner matrix and $D_n$ is a real deterministic diagonal matrix ($D_n$ is commonly referred to as an external source in the mathematical physics literature). 
  We study the universality of the local eigenvalue statistics of $W_n$ for a general class of Wigner matrices $M_n$ and diagonal matrices $D_n$.

  Unlike the setting of many recent results concerning universality,  the global semicircle law fails for this model.
However,  we can still obtain the universal sine kernel formula for the correlation functions. This demonstrates 
the remarkable phenomenon that local laws are more resilient than global ones. 

The universality of the correlation functions follows from 
a four moment theorem, which we prove using a variant of 
the approach used earlier by Tao and Vu.

\end{abstract}

\maketitle

\section{Introduction}
We begin with the class of Hermitian random matrices with independent entries originally introduced by Wigner \cite{W}.  

\subsection{Wigner Random Matrices}

\begin{definition}
Let $n$ be a large number.  A \emph{Wigner matrix} of size $n$ is defined as a random Hermitian $n \times n$ matrix $M_n = (\zeta_{ij})_{i,j=1}^n$ where
\begin{itemize}
\item $\{\zeta_{ij} : 1 \leq i \leq j \leq n\}$ is a collection of independent random variables.
\item For $1 \leq i < j \leq n$, $\zeta_{ij}$ has mean zero and unit variance.  
\item For $1 \leq i \leq n$, $\zeta_{ii}$ has mean zero and variance $\sigma^2$.  
\end{itemize}
\end{definition}

The prototypical example of a Wigner real symmetric matrix is the Gaussian orthogonal ensemble (GOE).  The GOE is defined by the probability distribution 
\begin{equation} \label{eq:distGOE}
	\Prob(d H) = Z^{(\beta)}_n \exp\left({-\frac{\beta}{4}\tr{H^2}}\right) d H
\end{equation}
on the space of $n \times n$ real symmetric matrices when $\beta = 1$ and $d H$ refers to the Lebesgue measure on the $n(n+1)/2$ different elements of the matrix.  Here $Z_n^{(\beta)}$ denotes the normalization constant.  So for a matrix $H=\left( h_{ij} \right)_{i,j=1}^n$ drawn from the GOE, the elements $\{ h_{ij} : 1 \leq i \leq j \leq n \}$ are independent Gaussian random variables with mean zero and variance $1+\delta_{ij}$.  

The classical example of a Wigner Hermitian matrix is the Gaussian unitary ensemble (GUE).  The GUE is defined by the probability distribution given in \eqref{eq:distGOE} with $\beta = 2$, but on the space of $n \times n$ Hermitian matrices.  Thus, for a matrix $H=\left( h_{ij} \right)_{i,j=1}^n$ drawn from the GUE, the $n^2$ different elements of the matrix, 
\begin{equation*}
	\{\Re h_{ij} : 1 \leq i \leq j \leq n \} \cup \{\Im  h_{ij} : 1 \leq i < j \leq n \}
\end{equation*}
are independent Gaussian random variables with zero mean and variance $(1+\delta_{ij})/2$.

We introduce the following notation.  Given a $n \times n$ Hermitian matrix $A$, we let 
$$ \lambda_1(A) \leq \cdots \leq \lambda_n(A)$$
denote the ordered eigenvalues of $A$.  Let $u_1(A), \ldots, u_n(A)$ be the corresponding orthonormal eigenvectors.  We also define the empirical spectral measure $\mu_A$ of $A$ as
$$ \mu_{A} := \frac{1}{n} \sum_{i=1}^n \delta_{\lambda_i(A)}. $$
Corresponding to the empirical spectral measure is the empirical spectral distribution (ESD) of $A$ given by
$$ F^A(x) := \frac{1}{n} \# \left\{ 1 \leq i \leq n : \lambda_i(A) \leq x \right \}, $$
where $\#E$ denotes the cardinality of the set $E$.  

A classic result for Wigner random matrices is Wigner's semicircle law \cite[Theorem 2.5]{BSbook}.  We define the semicircle distribution $F_{\mathrm{sc}}$ by its density 
$$ F'_{\mathrm{sc}}(x) := \left\{
     \begin{array}{lr}
        \frac{1}{2 \pi} \sqrt{4-x^2}, &  \text{if } |x| \leq 2,\\
       0, &  \text{otherwise.}
     \end{array}
   \right. $$
   
\begin{theorem}[Wigner's Semicircle law] \label{thm:semicirclelaw}
For each $n \geq 1$, let $M_n$ be a Wigner matrix of size $n$, where the entries above the diagonal are iid copies of a random variable $\zeta$ and whose diagonal entries are iid copies of $\tilde{\zeta}$.    Then the ESD of $\frac{1}{\sqrt{n}}M_n$ converges to the semicircle distribution $F_{\mathrm{sc}}$ almost surely as $n \rightarrow \infty$.  
\end{theorem}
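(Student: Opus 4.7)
The plan is to use the classical method of moments, combined with a truncation argument to cope with the fact that only the second moment of $\zeta$ and $\tilde\zeta$ is assumed finite.

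As a preliminary step I would truncate. Fix a slowly growing sequence $\omega_n\to\infty$ (for instance $\omega_n = n^{1/4}$) and let $\hat M_n$ be the Hermitian matrix obtained by replacing each off-diagonal entry $\zeta_{ij}$ by $\zeta_{ij}\mathbf{1}_{\{|\zeta_{ij}|\leq \omega_n\}}$, recentering, and renormalizing to unit variance (and similarly on the diagonal). The Hoffman--Wielandt inequality together with the standard rank-perturbation bound $\|F^A - F^B\|_\infty \leq \frac{1}{n}\rank(A-B)$, combined with Chebyshev and Borel--Cantelli, show that $F^{M_n/\sqrt n}$ and $F^{\hat M_n/\sqrt n}$ have the same almost-sure weak limit. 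Thus it suffices to prove the theorem for $\hat M_n$, whose entries now have all moments.

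Next, for the truncated ensemble I would compute
\[ \frac{1}{n}\,\E\,\tr\!\left(\frac{\hat M_n}{\sqrt n}\right)^{\!k} = \frac{1}{n^{1+k/2}} \sum_{i_1,\ldots,i_k=1}^n \E\bigl[\hat\zeta_{i_1 i_2}\hat\zeta_{i_2 i_3}\cdots \hat\zeta_{i_k i_1}\bigr], \]
indexing each summand by the closed walk $(i_1,i_2,\ldots,i_k,i_1)$ on $\{1,\ldots,n\}$. Since each $\hat\zeta_{ij}$ is centered and the entries are independent, only walks in which every edge of the underlying multigraph is traversed at least twice contribute. A combinatorial bookkeeping, grouping walks by the isomorphism type of their underlying multigraph and by the shape of the walk, shows that for odd $k$ the total contribution is $o(1)$, while for even $k=2m$ the leading contribution comes precisely from walks that trace out a planar tree with $m$ edges, each edge traversed exactly twice. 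These walks are in bijection with Dyck paths of length $2m$ and contribute, in the limit, the Catalan number $C_m$, which matches the $2m$-th moment of $F_{\mathrm{sc}}$.

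Finally, to upgrade convergence of expected moments to almost-sure convergence of the ESD, I would carry out a variance estimate of the form $\Var\bigl(\frac{1}{n}\tr(\hat M_n/\sqrt n)^k\bigr) = O(n^{-2})$, established by a similar pair-of-walks argument. Chebyshev together with Borel--Cantelli then gives almost-sure convergence of each fixed moment. Since $F_{\mathrm{sc}}$ is compactly supported it is determined by its moments, so the method of moments yields almost-sure weak convergence of $F^{M_n/\sqrt n}$ to $F_{\mathrm{sc}}$. The main obstacle is the combinatorial core: verifying that non-tree walks are subleading and that tree-doubled walks of length $2m$ are enumerated by $C_m$. This is Wigner's original insight; the delicate point is to show that each ``excess'' (extra vertex identification or shared edge beyond the tree-doubled skeleton) saves at least one power of $n^{-1/2}$, which is exactly what is needed to beat the prefactor $n^{-(1+k/2)}$.
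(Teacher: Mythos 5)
The paper does not prove Theorem~\ref{thm:semicirclelaw}: it is stated as a classical fact with a citation to \cite[Theorem 2.5]{BSbook}, and the moment-method argument you outline is exactly the standard proof given there, so there is no divergence from ``the paper's proof'' to discuss. Your sketch is essentially correct --- truncation/recentering/renormalization, the walk expansion of $\frac1n\E\tr(\hat M_n/\sqrt n)^k$, the reduction to closed walks whose multigraph has every edge covered twice, the Catalan/Dyck-path enumeration of the dominant tree-doubled walks, the observation that every additional vertex coincidence or multiply-covered edge costs at least $n^{-1/2}$, and the $O(n^{-2})$ variance bound plus Borel--Cantelli for almost-sure convergence of each moment. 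One small caveat on the truncation step: the renormalization (dividing every entry by $\sqrt{\Var\hat\zeta}$) changes essentially all $n^2$ entries, so the rank-perturbation inequality $\|F^A - F^B\|_\infty \leq \frac1n\rank(A-B)$ does not control that part; what does the work there is the Hoffman--Wielandt / Frobenius-norm bound on the L\'evy distance (the rank bound is only useful for the genuinely sparse or low-rank pieces, such as the recentering, which is a rank-$\le2$ correction on the off-diagonal). You cite both tools, so nothing is missing, but the division of labor between them should be stated more carefully if this were written out in full.
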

\begin{remark}
Wigner's semicircle law holds in the case when the entries of $M_n$ are not identically distributed (but still independent) provided the entries satisfy a Lindeberg-type condition.  See \cite[Theorem 2.9]{BSbook} for further details.  
\end{remark}

Wigner's semicircle law describes the global behavior of the eigenvalues.  In particular, if $\{M_n\}_{n\geq 1}$ is the sequence of Wigner matrices from Theorem \ref{thm:semicirclelaw}, it follows that, for any fixed interval $I \subset \mathbb{R}$,
\begin{equation*} %\label{eq:semicircleinterval}
	\frac{1}{n} \# \left\{ 1 \leq i \leq n : \lambda_i \left( \frac{1}{\sqrt{n}} M_n \right) \in I \right\} \longrightarrow \int_{I} F_{\mathrm{sc}}'(x) dx 
\end{equation*}
almost surely as $n \rightarrow \infty$.  

A major problem in random matrix theory is to study 
the distribution of the eigenvalues at a local scale, and in  the last ten years or so  there have been a series of 
important developments
(see   \cite{E,EKY,ESY1,ESY2,ESY3,ESY4,ESYY,EY,EYY1,EYY2,EYY,TVuniv,TVedge, TViid, TVmeh,  TVsur} and references therein).

  An important ingredient in the  study of the local eigenvalue statistics of Wigner matrices are the correlation functions. The point process $\{\lambda_1,\ldots,\lambda_n\}$ of eigenvalues of a random $n \times n$ Hermitian matrix 
can be described using the \emph{$k$-point correlation functions} $\rho^{(k)} = \rho^{(k)}_n: \R^k \to \R$, defined for any fixed natural number $k$ by requiring that

\begin{equation}\label{cord}
 \E \sum_{\substack{ i_1,\ldots,i_k\\ \text{distinct} }} \varphi (\lambda_{i_1},\ldots,\lambda_{i_k}) = \int_{\R^k} \varphi (x_1,\ldots,x_k) \rho^{(k)}(x_1,\ldots,x_k)\ dx_1\ldots dx_k
 \end{equation}
for any continuous, compactly supported test function $\varphi: \mathbb{R}^k \to \mathbb{C}$.  We refer the reader to \cite{HKPV} for further details regarding random point processes and correlation functions.  

\begin{remark}  When  $x_1,\ldots,x_k$ are distinct and the entries of the random matrix have absolutely continuous joint distribution,  one can interpret $\rho^{(k)}(x_1,\ldots,x_k)$ as the unique quantity such that the probability that there is an eigenvalue in each of the intervals  $(x_i -\eps, x_i +\eps)$ for $i=1,\ldots,k$ is $(\rho^{(k)}(x_1,\ldots,x_k)+o_{\eps \to 0}(1)) (2\eps)^k$ in the limit $\eps \to 0$.
\end{remark}

One of the main tools in random matrix theory is  the Stieltjes transform (of the empirical spectral measure of $\frac{1}{\sqrt{n}} M_n$).  We remind the reader that if $\mu$ is a probability measure on the real line, its Stieltjes transform is defined by
\begin{equation} \label{eq:def:stmu}
	m_{\mu}(z) := \int_{\R} \frac{d\mu(x)}{x - z} 
\end{equation}
for $z \in \C^+ := \{ w \in \mathbb{C} : \Im(w) > 0 \}$.  For a $n \times n$ Hermitian matrix $A$, we introduce the notation
$$ m_{A}(z) := m_{\mu_A}(z) = \int_{\mathbb{R}} \frac{ d \mu_{A}(x)}{x - z }, \quad z \in \C^+. $$
We let $m_{\mathrm{sc}}$ denote the Stieltjes transform of the semicircle distribution.  That is, 
$$ m_{\mathrm{sc}}(z) := \int_{\mathbb{R}} \frac{dF_{\mathrm{sc}}(x)}{x - z} = \frac{1}{2\pi} \int_{-2}^2 \frac{\sqrt{4-x^2}}{x - z} dx, \quad z \in \mathbb{C}^+. $$
It is well known that $m_{\mathrm{sc}}$ is the unique solution of 
\begin{equation} \label{eq:scst}
	m_{\mathrm{sc}}(z) + \frac{1}{z + m_{\mathrm{sc}}(z)} = 0, \quad z \in \C^+ 
\end{equation}
that satisfies $m_{\mathrm{sc}}(z) \rightarrow 0$ as $|z| \rightarrow \infty$.  

From standard Stieltjes transform techniques (see for example \cite[Theorem B.9]{BSbook}), it follows that the conclusion of Theorem \ref{thm:semicirclelaw} is equivalent to 
\begin{equation} \label{eq:sttconv}
	m_{\frac{1}{\sqrt{n}} M_n}(z) \longrightarrow m_{\mathrm{sc}}(z) 
\end{equation}
almost surely as $n \rightarrow \infty$ for each fixed $z \in \C^+$.  

A key observation \cite{BSbook} that has been used by many researchers is that information about the eigenvalues in a short interval centered at $\Re (z)$
 can be established by investigating  \eqref{eq:sttconv} when $\Im(z)$ is allowed to approach zero as $n$ tends to infinity.  

Correlation functions of so-called generalized Wigner random matrices have also been studied (see \cite{EY} for a survey).  In these models, the variances of the entries are not required to be the same, but the variances must satisfy a number of constraints to ensure the semicircle law still holds.

\subsection{Random matrices with external source}

In this note, we study deformed Wigner random matrices.  That is, matrices of the form
\begin{equation} \label{eq:Wn}
	W_n := \frac{1}{\sqrt{n}} M_n + D_n, 
\end{equation}
where $M_n$ is a Wigner matrix of size $n$ and $D_n$ is a deterministic real diagonal matrix.  
The matrix $D_n$ is usually referred to as an external source in the mathematical physics literature
(see \cite{BKext2,BKext, BKext3, CW} and references therein). 

As in the Wigner case above, there exists a limiting ESD that describes the global behavior of the eigenvalues of deformed Wigner random matrices.  In this case, the limiting distribution depends on the sequence of matrices $\{D_n\}_{n \geq 1}$; in many cases the limiting distribution is no longer the semicircle law.  

Assume the spectral norm $\|D_n\|$ of $D_n$ is uniformly bounded in $n$; suppose there exist a probability measure $\mu_D$ such that $\mu_{D_n} \longrightarrow \mu_D$ as $n \rightarrow \infty$.  Then, for a large class of Wigner matrices, there exists a probability measure $\mu_W$ such that $\mu_{W_n} \longrightarrow \mu_W$ almost surely as $n \rightarrow \infty$ \cite{AGZ,K,P,V}.  In particular, $\mu_W$ is characterized by Pastur's relation
\begin{equation} \label{eq:prst}
	m_{\mu_W}(z) = \int \frac{ d \mu_D(x)}{x - z - m_{\mu_W}(z)}, \quad z \in \C^+. 
\end{equation}
Clearly when $\mu_D = \delta_0$, \eqref{eq:prst} reduces to the semicircle case given in \eqref{eq:scst}.  It follows from the results of Biane \cite{B} that $\mu_W$ has continuous density $\rho$ characterized by the Stieltjes inversion formula
\begin{equation} \label{eq:def:rho}
	\rho(x) = \lim_{y \searrow 0} \frac{1}{\pi} \Im \left(m_{\mu_W}(x + \sqrt{-1} y) \right). 
\end{equation}

In this paper, we study a local version of the above limiting law under a number of assumptions on $M_n$, $\{D_n\}_{n \geq 1}$, and $\rho$.  Before stating  our main results, let us  describe some  previous results on the local behavior of eigenvalues of Wigner random matrices with external source.  

In \cite{BKext}, Bleher and Kuijlaars consider the case when the matrix $M_n$ is drawn from the GUE.  In particular, they consider the random matrix ensemble
\begin{equation} \label{eq:nunw}
	\nu_n(dW_n) =  \frac{1}{Z_n} \exp\left({- n \tr \left(\frac{1}{2} W_n^2 -D_n W_n \right)}\right) dW_n
\end{equation}
defined on $n \times n$ Hermitian matrices, where the diagonal matrix $D_n$ has two eigenvalues $\pm a$ each with equal multiplicity.  In this setting the eigenvalues of $W_n$ form a determinantal random point process with kernel $K_n(x,y)$, which can be expressed in terms of a solution to a Riemann-Hilbert problem \cite{BKorth}; see \cite{HKPV,Sdet} for further details regarding determinantal random point processes.  

\begin{theorem}[Bleher-Kuijlaars, \cite{BKext}] \label{thm:BKgauss}
Let $W_n$ be drawn from the set of $n \times n$ Hermitian matrices by \eqref{eq:nunw}, where the diagonal matrix $D_n$ has two eigenvalues $\pm a$ each with equal multiplicity and assume $a > 1$.  Then the mean limiting density of eigenvalues
$$ \rho(x) = \lim_{n \rightarrow \infty} \frac{1}{n} K_n(x,x) $$
exists, and is supported on two intervals $[-\alpha, - \beta]$ and $[\beta, \alpha]$.  The density $\rho$ is real analytic on $(-\alpha, -\beta) \cup (\beta, \alpha)$ and can be expressed as 
$$ \rho(x) =  \frac{1}{\pi} \Im( \xi_{1+}(x)), $$
where $\xi_1$ is the inverse of
$$ z = \frac{ \xi^3 - (a^2 - 1)\xi}{\xi^2 - a^2} $$
with $\xi_1(z) \sim z$ as $z \rightarrow \infty$ and $\xi_{1+}$ is the analytic continuation of $\xi_1$ to $\mathbb{C} \setminus ([-\alpha,-\beta]\cup [\beta,\alpha])$.  Moreover, for every $x_0 \in (-\alpha,-\beta)\cup(\beta,\alpha)$ and $u,v \in \mathbb{R}$, we have
\begin{equation} \label{eq:convsine}
	\lim_{n \rightarrow \infty} \frac{1}{n \rho(x_0)} \hat{K}_n \left( x_0 + \frac{u}{n \rho(x_0)}, x_0 + \frac{v}{n \rho(x_0)} \right) = \frac{ \sin \pi (u-v)}{\pi(u-v)}, 
\end{equation}
where $\hat{K}_n(x,y) = e^{n(h(x) - h(y))} K_n(x,y)$ and
$$ h(x) = - \frac{1}{4} x^2 + \Re (\eta_{1+}(x)), \quad x \in (-\alpha, -\beta) \cup (\beta, \alpha) $$
with $\eta_{1+}(x) = \int_{\alpha}^x \xi_{1+}(s) ds$.  
\end{theorem}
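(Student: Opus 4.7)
The plan is to exploit the fact that the measure \eqref{eq:nunw} is invariant under unitary conjugation $W_n \mapsto U W_n U^*$, so the Harish-Chandra/Itzykson-Zuber formula integrates out the eigenvectors and reduces the joint density of the eigenvalues $\lambda_1,\dots,\lambda_n$ to an explicit expression on $\R^n$. Because $D_n$ has only two distinct eigenvalues $\pm a$, the resulting density factors into a biorthogonal/determinantal form; standard manipulations (cf.\ the construction of multiple orthogonal polynomial ensembles) identify it as a determinantal point process whose correlation kernel $K_n(x,y)$ is expressible through multiple orthogonal polynomials of type~II with respect to the two weights $w_{\pm}(x) = \exp(-n(\tfrac{1}{2}x^2 \mp a x))$. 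A Christoffel-Darboux-type identity then writes $K_n$ in terms of the solution $Y: \C\setminus\R \to \C^{3\times 3}$ of a $3\times 3$ Riemann-Hilbert problem (RHP) with piecewise-constant jumps $Y_+ = Y_- J_Y$ on $\R$ encoding $w_\pm$, and prescribed behavior $Y(z) = (I+O(1/z))\diag(z^n,z^{-n/2},z^{-n/2})$ at infinity.

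First I would solve the associated constrained vector equilibrium problem for a pair of measures $(\mu_1,\mu_2)$; the extremal measures produce $g$-functions $g_1, g_2$ whose supports give the spectrum of $W_n$ in the large-$n$ limit. A direct algebraic computation shows that the relevant spectral curve is exactly $z = (\xi^3-(a^2-1)\xi)/(\xi^2-a^2)$: the three branches $\xi_1,\xi_2,\xi_3$ of this cubic correspond to the sheets of a Riemann surface, and when $a>1$ the discriminant analysis shows that the branch points lie at $\pm\alpha,\pm\beta$ with $0<\beta<\alpha$ (two-cut regime). The limiting density is then recovered from the standard Stieltjes/Plemelj identity $\rho(x) = \tfrac{1}{\pi}\Im \xi_{1+}(x)$, giving both the explicit form of $\rho$ and its support $[-\alpha,-\beta]\cup[\beta,\alpha]$.

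Next I would run the Deift-Zhou nonlinear steepest descent on the RHP: the chain of transformations $Y\mapsto T\mapsto S\mapsto R$ first normalizes $Y$ at infinity using $g_1,g_2$, then opens lenses around each band $[-\alpha,-\beta]$ and $[\beta,\alpha]$ to turn oscillatory jumps into exponentially small ones off $\R$, and finally matches $S$ to a global parametrix constructed from the algebraic Riemann surface of $\xi$ together with local parametrices near the four edges (using Airy functions) and near any interior point $x_0$ (using a trigonometric/sine parametrix). On the remaining contours the jump of $R$ is $I + O(1/n)$, which yields $R(z) = I + O(1/n)$ uniformly. Unwinding the transformations, the sine parametrix at $x_0$ produces precisely \eqref{eq:convsine}, and the gauge factor $e^{n(h(x)-h(y))}$ in $\hat{K}_n$ is exactly the exponential prefactor generated by the normalization $Y\mapsto T$, which identifies $h(x) = -\tfrac{1}{4}x^2 + \Re\eta_{1+}(x)$ with $\eta_{1+}$ the antiderivative of $\xi_{1+}$.

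The main obstacle is the RHP analysis itself, in particular setting up the correct vector equilibrium problem and verifying the variational inequalities (the ``strict inequality'' on the complement of the support and on the constraint) that justify opening lenses in the right direction. The algebraic geometry of the cubic and the careful choice of sheet structure for $\xi$ are what forces the hypothesis $a>1$: in the subcritical regime $a<1$ the two bands merge and the local statistics at the origin change character, so a clean two-cut steepest descent is only available when $a>1$. Once the equilibrium measure and $g$-functions are in place, the rest of the analysis is the by-now standard Deift-Zhou machinery.
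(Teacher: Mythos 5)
The paper under review does not itself prove Theorem~\ref{thm:BKgauss}; it is quoted verbatim from Bleher and Kuijlaars \cite{BKext} (with the uniformity refinement noted in Remark~\ref{rem:uniform}) and then used as a black box in the proof of Corollary~\ref{cor:sine}. Your sketch is a faithful outline of the proof actually given in \cite{BKext} and \cite{BKorth}: the reduction via HCIZ to a multiple orthogonal polynomial ensemble with weights $e^{-n(x^2/2 \mp a x)}$, the $3\times 3$ Riemann--Hilbert characterization, the Pastur spectral curve $z=(\xi^3-(a^2-1)\xi)/(\xi^2-a^2)$ with its three sheets, the two-cut structure for $a>1$, Deift--Zhou steepest descent with Airy parametrices at the four edges and the sine parametrix in the bulk, and the gauge factor $e^{n(h(x)-h(y))}$ arising from the $Y\mapsto T$ normalization. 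The one small stylistic divergence is that you frame the construction of the $g$-functions through a ``constrained vector equilibrium problem''; in \cite{BKext} the authors do not pose an equilibrium problem but instead extract the $\lambda$-functions directly as antiderivatives of the three branches $\xi_1,\xi_2,\xi_3$ of the explicit cubic (the equilibrium-problem reformulation appears in later works such as \cite{BKext3} and subsequent literature). Since you also describe the direct algebraic route in the same paragraph, this is a presentational choice rather than a gap. As a proof plan it is consistent with the reference the paper relies on.
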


\begin{remark} \label{rem:uniform}
The convergence in \eqref{eq:convsine} actually holds uniformly on compact subsets \cite{AKper}.  This can be seen from \cite[equation (8.6)]{BKext}, which holds uniformly in $z$.  One can then check that the remaining estimates in \cite[Section 8]{BKext} as well as \cite[equations (9.1), (9.2), (9.9), (9.12)]{BKext} hold uniformly.  
\end{remark}

A remarkable  point here is the appearance of the sine kernel, which shows that locally the eigenvalues interact in the same way as in the GUE and other Wigner models.  This leads one to conjecture
 that local laws are more resilient than the global law 
(notice that the semicircle law has completely disappeared from the picture).  This serves as the main motivation of our work.

Results similar to Theorem \ref{thm:BKgauss}  were also obtained by Shcherbina in \cite{Sbulk,Sedge} for both the bulk and edge regimes for a large class of diagonal matrices $D_n$.  In \cite{CW}, Claeys and Wang study an ensemble of random matrices that generalizes \eqref{eq:nunw} when $D_n$ has equispaced eigenvalues.

Following the spirit of recent developments concerning universality, we are going to extend Theorem \ref{thm:BKgauss} for a large class of random matrices in which the entries of $M_n$ are no longer Gaussian. As a matter of fact, we are going to prove 
a stronger theorem, which can be seen as the analogue of the Four Moment theorem from \cite{TVuniv}. The universaliy of the correlation functions follows immediately by the arguments from \cite{TVuniv}. 

 In \cite{LS}, Lee and Schnelli study random matrices of the form
$$ W_n = \frac{1}{\sqrt{n}} M_n + V_n, $$
where $M_n$ is a Wigner matrix whose entries satisfy a sub-exponential decay condition, and $V_n$ is diagonal matrix, independent of $M_n$, whose diagonal entries are iid copies of random variable $v$.  Under some additional assumptions on $v$ and the limiting density $\rho$, the authors prove a local limit law for the Stieltjes transform $m_{W_n}$.  The local limit law is then used to prove a delocalization result for the eigenvectors and a rigidity result for the eigenvalues.
These are important steps in proving universality.  However, the assumptions on $V_n$ in \cite{LS} and our assumption on 
$D_n$  are completely different.  However, the similarities between the results in \cite{LS} and those below hint that a more general treatment of the subject may be possible.

We use asymptotic notation, such as $O,o,\Theta$, under the assumption that $n \rightarrow \infty$.  See Section \ref{sec:notation} for a complete description of the asymptotic notation used here and throughout the paper.

\section{Main Results} \label{sec:results}

We consider a general class of Wigner random matrices whose entries satisfy a sub-exponential decay condition.  

\begin{definition}[Condition {\bf C0}] \label{def:C0}
A random Hermitian matrix $M_n = (\zeta_{ij})_{1 \leq i,j \leq n}$ of size $n$ is said to obey condition {\bf C0} if
\begin{itemize}
\item The $\zeta_{ij}$ are independent (but not necessarily identically distributed) for $1 \leq i \leq j \leq n$.  For $1 \leq i < j \leq n$, they have mean zero and unit variance; for $i=j$, they have mean zero and variance $\sigma^2$ for some fixed $\sigma^2>0$ independent of $n$.  
\item There exist constants $C,C'>0$ such that
$$ \P(|\zeta_{ij}| \geq t^C) \leq \exp(-t) $$
for all $t \geq C'$ and every $1 \leq i \leq j \leq n$.  
\end{itemize}
\end{definition}

A large class of Hermitian and real symmetric Wigner matrices obey condition {\bf C0}.  For instance, both the GOE and GUE satisfy the conditions above.    Another example is a random symmetric matrix whose upper-diagonal entries $\{\zeta_{ij} : 1 \leq i \leq j \leq n\}$ are independent Bernoulli random variables which take values $\pm 1$ with probability $1/2$.  

Let $W_n$ be given by \eqref{eq:Wn}, where $M_n$ is a $n \times n$ random Hermitian matrix that satisfies condition {\bf C0} and $\{D_n\}_{n \geq 1}$ is a sequence of deterministic diagonal matrices that satisfy the following condition.  

\begin{definition}[Condition {\bf C1}] \label{def:C1}
For each $n \geq 1$, let $D_n$ be a $n \times n$ deterministic real diagonal matrix.  The sequence of matrices $\{D_n\}_{n \geq 1}$ is said to obey condition {\bf C1} with probability measure $\mu_D$ if there exist constants $C''>0, l \geq 1, 0 < p_1, p_2, \ldots, p_l \leq 1$, and distinct $a_1, a_2, \ldots, a_l \in \R$ such that the following conditions hold.  
\begin{itemize}
\item The probability measure $\mu_D$ takes the form:
$$ \mu_D = \sum_{i=1}^l p_i \delta_{a_i}. $$
\item For each $n \geq 1$, there exist $0 \leq p_1^{(n)}, p_2^{(n)}, \ldots, p_l^{(n)} \leq 1$ such that
$$ \mu_{D_n} = \sum_{i=1}^l p_{i}^{(n)} \delta_{a_i}, $$
where 
$$ \sup_{1 \leq i \leq l} \left| p_{i}^{(n)} - p_{i} \right| \leq \frac{C''}{n} $$
for all $n \geq 1$.  
\end{itemize}
\end{definition}

In particular, it follows that if $\{D_n\}_{n \geq 1}$ satisfies condition {\bf C1} with measure $\mu_D$, then $\|D_n\|$ is uniformly bounded in $n$, and $\mu_{D_n} \longrightarrow \mu_{D}$ as $n \rightarrow \infty$.  In this case, the limiting ESD of $W_n$ has continuous density $\rho$, \cite{B}.  We assume the following conditions on $\rho$.  

\begin{definition}[Condition {\bf (A)}] \label{def:A}
We say that $\rho$ satisfies condition {\bf (A)} if $\rho$ is a continuous probability density supported on $q \geq 1$ disjoint intervals $[\alpha_j, \beta_j], j=1,\ldots,q$, where 
\begin{equation} \label{eq:condAnon0}
	\rho(x) > 0 \quad \text{for all} \quad x \in \bigcup_{j=1}^q (\alpha_j, \beta_j). 
\end{equation}
\end{definition}

For a density $\rho$ that satisfies condition {\bf (A)}, we define the quantiles
\begin{equation} \label{eq:def:svalues}
	s_j(\rho) := \int_{-\infty}^{\beta_j} \rho(x) dx, \quad j=1,\ldots, q, \quad s_0(\rho) := 0. 
\end{equation}
Given a small positive parameter $\eps$, we say that the index $i = i(n)$ is in the $(\eps,n)$-bulk of $\rho$ if 
$$ (s_{j-1}(\rho) +  \eps)n \leq i \leq (s_{j}(\rho) -\eps) n $$
for some $j=1,\ldots,q$.  We will also need the following definition.   

\begin{definition}[Moment matching]
We say that two complex random variables $\zeta$ and $\zeta'$ \textit{match to order $k$} if
$$ \E \Re(\zeta)^m\Im(\zeta)^l = \E \Re(\zeta')^m \Im(\zeta')^l $$
for all $m,l \geq 0$ such that $m+l \leq k$.  
\end{definition}

We will prove the following Four Moment theorem.

\begin{theorem}[Four Moment theorem] \label{thm:4moment}
There is a small positive constant $c_0$ such that for every $\eps > 0$ and $k \geq 1$ the following holds.  Let $M_n = (\zeta_{ij})_{1 \leq i,j \leq n}$ and $M_n' = (\zeta_{ij}')_{1 \leq i,j \leq n}$ be two random matrices satisfying condition {\bf C0}.  Assume that for any $1 \leq i < j \leq n$, $\zeta_{ij}$ and $\zeta_{ij}'$ match to order $4$ and for any $1 \leq i \leq n$, $\zeta_{ii}$ and $\zeta_{ii}'$ match to order $2$.  Let $\{D_n\}_{n \geq 1}$ satisfy condition {\bf C1} with measure $\mu_D$.  Assume the limiting density $\rho$ (defined by \eqref{eq:def:rho}) satisfies condition {\bf (A)}.  Set $A_n = \sqrt{n}M_n + n D_n$ and $A_n' = \sqrt{n} M_n' + n D_n$, and let $G:\R^k \rightarrow \R$ be a smooth function obeying the derivative bound
\begin{equation} \label{eq:der}
	|\nabla^j G(x)| \leq n^{c_0} 
\end{equation}
for all $0 \leq j \leq 5$ and $x \in \R^k$.  Then for any $ i_1 < i_2 < \cdots < i_k$ in the $(\eps,n)$-bulk of $\rho$, and for $n$ sufficiently large depending on $\eps,k,\mu_D$ (and the constants $C,C',C''$ in Definitions \ref{def:C0} and \ref{def:C1}) we have
\begin{equation} \label{eq:egcon}
|\E[G(\lambda_{i_1}(A_n), \ldots, \lambda_{i_k}(A_n))] - \E[G(\lambda_{i_1}(A_n'), \ldots, \lambda_{i_k}(A_n'))]| \leq n^{-c_0}. 
\end{equation}
If $\zeta_{ij}$ and $\zeta_{ij}'$ only match to order $3$ rather than $4$, then there is a positive constant $C$ independent of $c_0$ such that the conclusion \eqref{eq:egcon} still holds provided that one strengthens \eqref{eq:der} to
$$ |\nabla^j G(x)| \leq n^{-Cjc_0} $$
for all $0 \leq j \leq 5$ and $x \in \R^k$.  
\end{theorem}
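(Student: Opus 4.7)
The plan is to follow the Lindeberg swapping strategy of Tao and Vu \cite{TVuniv}, adapted to the external source model. I would first enumerate the $N = O(n^2)$ independent entries of $M_n$ on and above the diagonal and construct a sequence of interpolating matrices $M_n^{(0)} = M_n, M_n^{(1)}, \ldots, M_n^{(N)} = M_n'$ in which consecutive matrices differ in exactly one entry. Setting $A^{(s)} = \sqrt{n} M_n^{(s)} + n D_n$, it suffices to prove the telescoping bound
$$ \bigl| \E[G(\lambda_{i_1}(A^{(s+1)}), \ldots, \lambda_{i_k}(A^{(s+1)}))] - \E[G(\lambda_{i_1}(A^{(s)}), \ldots, \lambda_{i_k}(A^{(s)}))] \bigr| = O(n^{-2-c_0}) $$
for each $s$, which upon summing over the $O(n^2)$ swaps yields the claimed $O(n^{-c_0})$ estimate.

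To estimate one swap, I would fix the matrix $A$ agreeing with both $A^{(s)}$ and $A^{(s+1)}$ except at a single position $(i,j)$, write $A(t) = A + t\sqrt{n} V$ with $V = E_{ij} + E_{ji}$ (or $V = E_{ii}$ on the diagonal), and Taylor expand each bulk eigenvalue $\lambda_{i_m}(A(t))$ in $t$ to order $5$ via Hadamard perturbation theory. The $j$th derivative of $\lambda_{i_m}$ in $t$ is a finite sum of products of eigenvector overlaps $\langle u_\alpha(A), V u_\beta(A)\rangle$ divided by products of eigenvalue gaps $\lambda_{i_m}(A) - \lambda_\alpha(A)$. Substituting these expansions into $G$ and invoking \eqref{eq:der} produces a polynomial in $t$ of degree $\leq 4$ plus a fifth-order remainder; taking expectations, the four-moment matching hypothesis causes the polynomial contribution to cancel exactly (for diagonal entries the perturbation has no off-diagonal part, which is why two-moment matching is already enough there).

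The remaining task is to bound the expected fifth-order remainder by $O(n^{-2-c_0})$. For this I would use three a priori estimates on $A$: (i) eigenvector delocalization $\|u_\alpha(A)\|_\infty \leq n^{-1/2 + o(1)}$ with overwhelming probability; (ii) level repulsion $|\lambda_{i_m}(A) - \lambda_\alpha(A)| \geq n^{-o(1)}$ for $\alpha \neq i_m$ in the $(\eps,n)$-bulk; and (iii) rigidity of the bulk eigenvalue locations. From (i) each overlap $\langle u_\alpha, \sqrt{n} V u_\beta \rangle$ is $O(n^{-1/2+o(1)})$, and combined with (ii) the $j$th derivative of $\lambda_{i_m}$ in $t$ is $O(n^{-j/2 + o(1)})$. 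Together with $\E |\zeta_{ij}|^5 = O(1)$ from condition {\bf C0} and the $n^{c_0}$ bound on $\nabla^j G$, the remainder is $O(n^{-5/2 + O(c_0)})$, which beats $n^{-2-c_0}$ once $c_0$ is chosen small enough. The three-moment matching variant stops at fourth order, and the strengthened derivative bound $n^{-Cjc_0}$ then compensates for the one missing power of $n^{-1/2}$.

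The hard part will be establishing (i)--(iii) in the external source setting, since the global law for $W_n$ is no longer the semicircle and the classical Wigner arguments do not transfer directly. The key input is a sharp local law for $m_{W_n}(z)$ on scales $\Im z \gtrsim n^{-1+\delta}$, which I would derive by linearizing Pastur's equation \eqref{eq:prst} and using condition {\bf (A)} (strict positivity and square-root edge behavior of $\rho$) to establish stability of the self-consistent equation. Given such a local law, (i) and (iii) follow by now-standard resolvent techniques, while (ii) can be extracted by Gaussian convolution paired with a single-eigenvalue version of the present theorem, in the spirit of Erd\H{o}s-Schlein-Yau.
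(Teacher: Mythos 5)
Your proposal takes essentially the same route as the paper: Lindeberg entry-by-entry swapping, Taylor expansion of bulk eigenvalues via Hadamard perturbation theory, cancellation of the low-order terms by four-moment matching, and control of the fifth-order remainder via eigenvector delocalization, a gap bound, and a local law derived from the stability of Pastur's relation. This is the adaptation of the Tao--Vu strategy that the paper carries out; its Propositions \ref{prop:replacement} and \ref{prop:config} are precisely the swapping lemma and the ``good configuration holds with overwhelming probability'' step of your outline, while Theorem \ref{thm:concentration}, Theorem \ref{thm:eigenvectors}, and Theorem \ref{thm:gap} supply your inputs (iii), (i), and (ii).

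Where you diverge is in how you propose to obtain the gap bound (ii). You suggest Gaussian convolution paired with a single-eigenvalue moment comparison, in the spirit of Erd\H{o}s--Schlein--Yau. The paper instead proves Theorem \ref{thm:gap} by the Tao--Vu ``backwards propagation of gap'' argument (Lemma \ref{lemma:propagation} and Proposition \ref{prop:bad} in Section \ref{sec:gap}), which runs on the interlacing identity together with delocalization and the local law, and requires no Gaussian reference ensemble. That choice is not cosmetic: the ESY-style route would require a Wegner-type level repulsion estimate for the Gauss-divisible external-source model, which is not available off the shelf and would need a separate analysis of the determinantal correlation kernel near the spectrum; the backwards-propagation route is self-contained once Theorems \ref{thm:concentration} and \ref{thm:eigenvectors} are in hand, which is why the paper uses it. Two further points to tighten: condition {\bf (A)} does not impose square-root edge behavior on $\rho$ — only strict positivity on the interior of the support intervals — and since the result is a bulk statement this is all that is used, as the stability analysis in Section \ref{sec:stability} proceeds by a compactness/continuity argument rather than any edge expansion. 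And since Theorem \ref{thm:gap} holds only with high probability (not overwhelming), you cannot apply the fifth-order remainder bound unconditionally; the swapping argument must first truncate $G$ to a version that vanishes on the small-gap event, and it is exactly the gap theorem that controls the resulting truncation error.
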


As a consequence, the correlation functions are universal (under the four moments matching assumption).

\begin{theorem} [Universality of the correlation funcitons] \label{thm:corr} 
Let $M_n = (\zeta_{ij})_{1 \leq i,j \leq n}$ and $M_n' = (\zeta_{ij}')_{1 \leq i,j \leq n}$ be two random matrices satisfying condition {\bf C0}.  Assume that for any $1 \leq i < j \leq n$, $\zeta_{ij}$ and $\zeta_{ij}'$ match to order $4$ and for any $1 \leq i \leq n$, $\zeta_{ii}$ and $\zeta_{ii}'$ match to order $2$.  Let $\{D_n\}_{n \geq 1}$ satisfy condition {\bf C1} with measure $\mu_D$.  Assume the limiting density $\rho$ (defined by \eqref{eq:def:rho}) satisfies condition {\bf (A)}.  Let $f:\mathbb{R}^k \to \mathbb{C}$ be a continuous and compactly supported function.  Fix $k \geq 1$; let $\rho_n^{(k)}$ be the $k$-point correlation function of $W_n = \frac{1}{\sqrt{n}} M_n + D_n$, and let ${\rho'}_n^{(k)}$ be the $k$-point correlation function of $W_n' = \frac{1}{\sqrt{n}} M_n' + D_n$.  Fix $x_o \in \bigcup_{j=1}^q (\alpha_j, \beta_j)$, where $[\alpha_j, \beta_j], j=1,\ldots,q$ are the intervals from Definition \ref{def:A}.  Then 
\begin{align*}
	\int_{\mathbb{R}^k} f(u_1, \ldots, u_k) \left( \frac{1}{n \rho(x_0) } \right)^k \left(\rho_n^{(k)} - {\rho'}_n^{(k)} \right) \left(x_0 + \frac{u_1}{n \rho(x_0)}, \ldots, x_0 + \frac{u_k}{n \rho(x_0)} \right) du_1 \ldots du_k
\end{align*}
converges to zero as $n \rightarrow \infty$, where
$$ \left(\rho_n^{(k)} - {\rho'}_n^{(k)} \right) \left(v_1, \ldots, v_k \right) := \rho_n^{(k)}(v_1, \ldots, v_k) - {\rho'}_n^{(k)}(v_1, \ldots, v_k) $$
for any $v_1, \ldots, v_k \in \mathbb{R}$.  
\end{theorem}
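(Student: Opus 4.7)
The plan is to derive Theorem~\ref{thm:corr} from the Four Moment theorem by the standard reduction of \cite{TVuniv}. Using \eqref{cord} and the rescaling $\lambda_i(A_n) = n\lambda_i(W_n)$ (with a change of variables $u_j \mapsto v_j = x_0 + u_j/(n\rho(x_0))$, whose Jacobian exactly cancels the $(n\rho(x_0))^{-k}$ factor), the integral in the statement becomes $\E\Sigma(A_n) - \E\Sigma(A_n')$, where
\begin{align*}
\Sigma(A) &:= \sum_{\substack{i_1,\ldots,i_k \\ \text{distinct}}} G\bigl(\lambda_{i_1}(A), \ldots, \lambda_{i_k}(A)\bigr), \\
G(y_1, \ldots, y_k) &:= f\bigl(\rho(x_0)(y_1-nx_0), \ldots, \rho(x_0)(y_k-nx_0)\bigr).
\end{align*}
For smooth $f$ one has $|\nabla^j G| = O(1)$, easily below the threshold $n^{c_0}$ in \eqref{eq:der}, so Theorem~\ref{thm:4moment} is applicable to each individual $k$-tuple. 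Since $f$ is only assumed continuous, one first approximates it uniformly by smooth compactly supported functions; the left-hand side of the statement depends continuously on $f$ in the uniform norm (with controlled support), so it suffices to treat smooth $f$.

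A naive term-by-term application of Theorem~\ref{thm:4moment} to all $\Theta(n^k)$ tuples gives the useless bound $n^{k-c_0}$. The key observation is that $G$ is supported where $|y_j - nx_0| \leq R/\rho(x_0)$ for every $j$ (with $R$ the support radius of $f$), so only tuples with every $\lambda_{i_j}(W_n)$ lying in the window $J := [x_0 - R/(n\rho(x_0)), x_0 + R/(n\rho(x_0))]$ can contribute. Letting $s := \int_{-\infty}^{x_0} \rho(t)\, dt$ denote the $\rho$-quantile of $x_0$, a rigidity estimate of the form $|\lambda_i(W_n) - \bar\lambda_i| \leq n^{-1+\eps}$ with overwhelming probability (where $\bar\lambda_i$ is the classical location of the $i$-th eigenvalue) implies that, up to a negligible exceptional event, only indices $i_j$ with $|i_j - sn| \leq n^\delta$ can contribute, for a suitable small $\delta$. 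Hence the number of active $k$-tuples is at most $n^{k\delta}$ for both $W_n$ and $W_n'$.

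For each such active tuple the hypotheses of Theorem~\ref{thm:4moment} are met: since $x_0 \in \bigcup_{j}(\alpha_j,\beta_j)$, for a small enough fixed $\eps > 0$ every active index lies in the $(\eps,n)$-bulk of $\rho$, and the theorem yields an error $n^{-c_0}$ per tuple, for a total of $O(n^{k\delta - c_0})$. This tends to zero if $\delta < c_0/k$, and the contribution of the low-probability exceptional events is also negligible given the crude bound $|\Sigma| \leq n^k \|G\|_\infty = O(n^k)$, provided rigidity supplies sufficiently rapid probability decay (e.g.\ a polynomial tail of large enough degree, easily obtained from a standard concentration inequality for eigenvalue positions). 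Putting the pieces together gives $\E\Sigma(A_n) - \E\Sigma(A_n') \to 0$, which is the assertion. The main obstacle is the rigidity input: pinning $\lambda_i(W_n)$ to within $n^{-1+\eps}$ of its classical bulk location with a sufficiently rapid probability decay. This is not implied by the coarse global convergence $\mu_{W_n} \to \mu_W$ and must be established separately, by adapting the Tao--Vu gap/level-repulsion machinery to the external-source setting, where the relevant local density is the Biane density \eqref{eq:def:rho} determined by Pastur's relation \eqref{eq:prst}; condition {\bf (A)}, in particular \eqref{eq:condAnon0}, provides the regularity of $\rho$ near $x_0$ needed to carry out such an adaptation.
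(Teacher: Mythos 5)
Your proposal follows the paper's proof closely: reduce to smooth $f$ by approximation, rewrite via \eqref{cord} with the change of variables that cancels the $(n\rho(x_0))^{-k}$ factor, observe that only $O(n^{kc})$ index tuples near the $\rho$-quantile of $x_0$ can contribute, and apply Theorem~\ref{thm:4moment} tuple-by-tuple together with the triangle inequality (choosing $c$ small relative to $c_0$). The local-law input you flag as ``the main obstacle'' is exactly Theorem~\ref{thm:concentration} (concentration of the ESD up to the edge), which the paper has already established in Section~\ref{sec:esd} from the stability analysis of Pastur's relation; the interval-counting form there is a little weaker than the eigenvalue rigidity $|\lambda_i - \bar\lambda_i| \leq n^{-1+\eps}$ you invoke, but it is sufficient for the index localization and is the result the paper actually cites, so there is no genuine gap left to fill.
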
 

As an immediate consequence of Theorem \ref{thm:BKgauss} and Theorem \ref{thm:corr} (see also Remark \ref{rem:uniform}), we show that the sine kernel is universal.  

\begin{corollary} [Universality of the sine kernel] \label{cor:sine} 
Let $M_n = (\zeta_{ij})_{1 \leq i,j \leq n}$ be a random matrix that satisfies condition {\bf C0} and suppose $M_n' = (\zeta_{ij}')_{1 \leq i,j \leq n}$ is drawn from the GUE.  Assume that for any $1 \leq i < j \leq n$, $\zeta_{ij}$ and $\zeta_{ij}'$ match to order $4$ and for any $1 \leq i \leq n$, $\zeta_{ii}$ and $\zeta_{ii}'$ match to order $2$.  Let $D_n$ be a diagonal matrix with two eigenvalues $\pm a$ each with equal multiplicity and assume $a > 1$.  Let $\rho$ be the limiting density from Theorem \ref{thm:BKgauss}.  Let $f:\mathbb{R}^k \to \mathbb{C}$ be a continuous and compactly supported function.  Fix $k \geq 1$; let $\rho_n^{(k)}$ be the $k$-point correlation function of $W_n = \frac{1}{\sqrt{n}} M_n + D_n$.  Fix $x_o \in (-\alpha, -\beta) \cup (\beta, \alpha)$, where $\alpha, \beta$ are defined in Theorem \ref{thm:BKgauss}.  Then
\begin{align*}
	\int_{\mathbb{R}^k} & f(u_1, \ldots, u_k) \left( \frac{1}{n \rho(x_0) } \right)^k \rho_n^{(k)} \left(x_0 + \frac{u_1}{n \rho(x_0)}, \ldots, x_0 + \frac{u_k}{n \rho(x_0)} \right) du_1 \ldots du_k \\
	& \longrightarrow \int_{\mathbb{R}^k}  f(u_1, \ldots, u_k) \det \left( K(u_i,u_j) \right)_{i,j=1}^k du_1 \ldots du_k
\end{align*}
as $n \rightarrow \infty$, where $K(x,y) = \frac{ \sin \pi(x-y)}{\pi(x-y)}$.  
\end{corollary}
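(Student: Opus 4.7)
The plan is to combine Theorem \ref{thm:corr} with Theorem \ref{thm:BKgauss} via a triangle inequality. Let $I_n$ denote the integral appearing in the conclusion of the corollary, and let $I_n'$ denote the analogous integral with $\rho_n^{(k)}$ replaced by ${\rho'}_n^{(k)}$, the $k$-point correlation function of $W_n' = \frac{1}{\sqrt n} M_n' + D_n$ where $M_n'$ is drawn from the GUE. The hypotheses of Theorem \ref{thm:corr} are satisfied: $M_n'$ trivially obeys \textbf{C0}; the spectrum of $D_n$ is $\{\pm a\}$ with equal multiplicity, so $\{D_n\}$ satisfies \textbf{C1}; and the explicit density $\rho = \frac{1}{\pi}\Im\xi_{1+}$ from Theorem \ref{thm:BKgauss} is continuous, supported on $[-\alpha,-\beta]\cup[\beta,\alpha]$, and strictly positive on the open intervals, so condition \textbf{(A)} holds as well. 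Hence $I_n - I_n' \to 0$, and it suffices to show that $I_n'$ converges to the desired sine-kernel integral.

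For the GUE comparison matrix, if $M_n'$ is GUE then $W_n'$ has density proportional to $\exp(-\tfrac{n}{2} \tr(W - D_n)^2)$ on Hermitian matrices, which after completing the square is a rewriting of the ensemble \eqref{eq:nunw} of Theorem \ref{thm:BKgauss} (the $D_n$-dependent factor is absorbed into the normalization constant). Consequently the eigenvalues of $W_n'$ form a determinantal point process with kernel $K_n$, so
\[
	{\rho'}_n^{(k)}(v_1, \ldots, v_k) = \det\bigl(K_n(v_i, v_j)\bigr)_{i,j=1}^k .
\]
The modified kernel satisfies $\hat K_n(x_i, x_j) = e^{n h(x_i)} K_n(x_i, x_j) e^{-n h(x_j)}$, so the matrix $(\hat K_n(x_i, x_j))_{i,j}$ is a diagonal conjugate of $(K_n(x_i, x_j))_{i,j}$ and has the same determinant. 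We may therefore replace $K_n$ by $\hat K_n$ inside the determinant when evaluating $I_n'$.

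By Theorem \ref{thm:BKgauss} together with Remark \ref{rem:uniform}, the rescaled kernel
\[
	\frac{1}{n\rho(x_0)} \hat K_n\!\left(x_0 + \frac{u}{n\rho(x_0)},\; x_0 + \frac{v}{n\rho(x_0)}\right) \longrightarrow K(u,v) = \frac{\sin\pi(u-v)}{\pi(u-v)}
\]
uniformly for $(u,v)$ in compact subsets of $\R^2$. Since $f$ is compactly supported, the integration in $I_n'$ is effectively over a fixed compact set $\Omega \subset \R^k$; on $\Omega$ the uniform entrywise convergence propagates to uniform convergence of the $k\times k$ determinants, and dominated convergence then gives
\[
	I_n' \longrightarrow \int_{\R^k} f(u_1,\ldots,u_k) \det\bigl(K(u_i, u_j)\bigr)_{i,j=1}^k\, du_1 \cdots du_k .
\]
Combined with $I_n - I_n' \to 0$ this proves the corollary. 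There is no genuine obstacle here; the argument is essentially bookkeeping, and the only substantive checks are the identification of $W_n'$ with the ensemble \eqref{eq:nunw} and the observation that the $e^{n(h(x)-h(y))}$ conjugation factor in $\hat K_n$ has no effect on the determinantal correlation functions.
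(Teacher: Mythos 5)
Your proof is correct and takes essentially the same route the paper intends: the paper states the corollary as an ``immediate consequence'' of Theorem \ref{thm:corr}, Theorem \ref{thm:BKgauss}, and Remark \ref{rem:uniform}, and your argument supplies exactly the bookkeeping implicit in that remark. In particular, the two checks you flag as the only substantive ones --- completing the square to identify the law of $W_n'$ with the ensemble \eqref{eq:nunw}, and observing that the conjugation factor $e^{n(h(x)-h(y))}$ in $\hat K_n$ disappears from the $k\times k$ determinant --- are precisely the facts the paper is relying on without spelling out.
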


\begin{remark}
Corollary \ref{cor:sine} also holds when $0 < a < 1$.  This follows from Theorem \ref{thm:corr} and \cite[Theorem 1.2]{BKext2}.  In this case, the limiting density $\rho$ is supported on a single interval.  The value $a=1$ corresponds to a critical case because the limiting density is supported on two intervals for $a > 1$ and on a single interval for $0 < a < 1$.  In the case that $a=1$, the limiting density does not satisfy the assumptions of condition {\bf (A)}; see \cite{BKext3} for further details.  
\end{remark}

One way to improve upon the four moment assumption in Corollary \ref{cor:sine} (at least for matrices with complex entries) is to extend Theorem \ref{thm:BKgauss} to gauss divisible 
matrices, using some other methods (such as direct computation or local  relaxation flow). Let $\tilde M_n := (1- c_n)^{1/2} M_n +  c_n^{1/2} G_n$, where $G_n$ is drawn from the GUE and $0 < c_n <1$ is a parameter that may depend on $n$. If one can 
extend Theorem \ref{thm:BKgauss} to the model $\tilde M_n$ for constant $c_n$ (an analogue of Johansson's result \cite{joh}  in the Wigner case), then we can reduce the four moment matching assumption to
three matching moments (see \cite{EYTV,TVuniv}). If we can push $c_n$ smaller (say $n^{-\kappa}$ for a constant $\kappa>0$) as done by Erdos et. al. (see \cite{EY} for a survey), then using the arguments from \cite{TVmeh},  we can reduce the three matching moments to two matching moments, which is optimal. (This step would also  require a localization result in the spirit of \cite{EYY}, but this is probably not a big obstacle.) 

We also conjecture that Theorem \ref{thm:4moment} and Theorem \ref{thm:corr} should hold for more general diagonal matrices $D_n$.  We require condition {\bf C1} in order to establish a version of Pastur's relation \eqref{eq:prst} for the matrix $W_n$ when $\Im(z)$ is allowed to approach zero as $n$ tends to infinity; see Lemma \ref{lemma:stbound} and Remark \ref{rem:C1} for further details.

\subsection{Gaps between consecutive eigenvalues}

Theorem \ref{thm:4moment} is the analog of \cite[Theorem 15]{TVuniv} for Wigner random matrices with external source.  As in the proof of \cite[Theorem 15]{TVuniv}, we will need to verify that the eigenvectors of $W_n = \frac{1}{\sqrt{n}} M_n + D_n$ are completely delocalized.  That is, with high probability, the unit eigenvectors $u_1(W_n), \ldots, u_n(W_n)$ have all coordinates of magnitude $O(n^{-1/2})$, modulo logarithmic corrections.  For technical reasons we delay presenting this result until Section \ref{sec:eigenvectors}.  We will also need the following eigenvalue gap bound.  

\begin{theorem}[Eigenvalue gap bound] \label{thm:gap}
Let $M_n$ be a $n \times n$ matrix that satisfies condition {\bf C0}.  Let $\{D_n\}_{n \geq 1}$ satisfy condition {\bf C1} with measure $\mu_D$.  Assume the limiting density $\rho$ (defined by \eqref{eq:def:rho}) satisfies condition {\bf (A)}.  Set $A_n := \sqrt{n} M_n + n D_n$.  Let $\eps > 0$.  Then for every $c_0 > 0$ there exist $c_1>0$ and $n_0>1$ depending on $\eps,c_0, \mu_D$ (and the constants $C,C',C''$ in Definitions \ref{def:C0} and \ref{def:C1}) such that if $i$ is in the $(\eps,n)$-bulk of $\rho$, then
$$ \P ( \lambda_{i+1}(A_n) - \lambda_i(A_n) \leq n^{-c_0}) \leq n^{-c_1} $$
for all $n > n_0$.  
\end{theorem}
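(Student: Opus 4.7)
The plan is to adapt the strategy of the proof of \cite[Theorem 19]{TVuniv} to the deformed setting, using the local Pastur relation (Lemma \ref{lemma:stbound}) in place of the local semicircle law, together with the eigenvector delocalization result from Section \ref{sec:eigenvectors}. Rescaling first: since $A_n = n W_n$, we have $\lambda_j(A_n) = n \lambda_j(W_n)$, and the conclusion is equivalent to showing $\Prob(\lambda_{i+1}(W_n) - \lambda_i(W_n) \leq n^{-1-c_0}) \leq n^{-c_1}$. Fix a parameter $\eps_0 > 0$ much smaller than $c_0$. Applying Lemma \ref{lemma:stbound} at $z = E + i\eta$ with $\eta = n^{-1+\eps_0}$ and $E$ ranging over a fine net in the $(\eps/2,n)$-bulk, we get that with overwhelming probability $|m_{W_n}(z) - m_{\mu_W}(z)| = o(1)$ uniformly in $E$. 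Taking imaginary parts and using $\Im m_{\mu_W}(E) = \pi \rho(E) = \Theta(1)$ in the bulk yields a Wegner-type estimate: every interval of length $2\eta$ centered in the bulk contains $(1+o(1)) \cdot 2 n \eta \rho(E)$ eigenvalues of $W_n$.

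Second, to upgrade this Wegner bound to a gap bound at the finer scale $n^{-1-c_0}$, I would use Cauchy interlacing in the style of Tao and Vu. Let $W_{n-1}$ denote the $(n-1) \times (n-1)$ principal minor of $W_n$ obtained by deleting the last row and column, with eigenvalues $\mu_1 \leq \cdots \leq \mu_{n-1}$ and orthonormal eigenvectors $v_1, \ldots, v_{n-1}$; write $X \in \C^{n-1}$ for the off-diagonal part of the deleted column, so $X_j = \frac{1}{\sqrt{n}} \zeta_{jn}$. Whenever $\lambda_i(W_n)$ is not an eigenvalue of $W_{n-1}$, the Schur complement identity gives
\[
 W_{nn} - \lambda_i(W_n) = \sum_{j=1}^{n-1} \frac{|v_j^* X|^2}{\mu_j - \lambda_i(W_n)}.
\]
Cauchy interlacing forces $\mu_i \in [\lambda_i(W_n), \lambda_{i+1}(W_n)]$, so under the assumption that the gap is at most $n^{-1-c_0}$ the $j = i$ term has magnitude at least $n^{1+c_0} |v_i^* X|^2$, while the left-hand side is $O(n^{o(1)})$ since $W_{nn}$ has sub-exponential tails and $\|W_n\|$ is bounded with overwhelming probability.

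To close the argument, I would show (i) that $|v_i^* X|^2$ is not anomalously small and (ii) that the $j \neq i$ terms in the Schur sum cannot compensate for the large $j = i$ term. For (i), eigenvector delocalization gives $\|v_i\|_\infty = O(n^{-1/2+o(1)})$, and a standard small-ball anti-concentration bound applied to the independent sub-exponential coordinates of $X$ (conditional on $W_{n-1}$) yields $|v_i^* X|^2 \geq n^{-1-\eps_0/2}$ with probability at least $1 - n^{-c_1}$. For (ii), I would split the $j \neq i$ terms into dyadic annuli $\{j : |\mu_j - \lambda_i(W_n)| \in [2^{k-1}\eta, 2^k\eta]\}$; the Wegner bound from the first paragraph controls the number of $j$ per annulus, and the typical bound $|v_j^* X|^2 = O(n^{-1+o(1)})$ (again from delocalization together with concentration of quadratic forms) yields a total contribution of $O(n^{o(1)})$. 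Combined, this forces $|v_i^* X|^2 \leq n^{-c_0 + o(1)}$, contradicting (i) once $\eps_0$ is small enough. The main obstacle here is the scale mismatch between the local Pastur relation, which only controls $m_{W_n}(z)$ down to $\Im z \sim n^{-1+\eps_0}$, and the target gap scale $n^{-1-c_0}$, which is much smaller: the resolution is exactly the separation of roles just described, with the local law used only to bound the bulk of the Schur sum and eigenvector delocalization plus anti-concentration handling the single dangerous term at $j=i$. A secondary difficulty is that $\rho$ is not semicircular, which is handled by the atomic structure of $\mu_D$ in condition {\bf C1}, making the self-consistent equation \eqref{eq:prst} amenable to a fixed-point analysis at the short scale $\eta = n^{-1+\eps_0}$.
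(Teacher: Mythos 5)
Your proposal takes a genuinely different route from the paper's proof, and unfortunately it has a gap at the crucial step. The paper does not attempt a one-step argument; it follows the Tao--Vu ``backwards propagation of gap'' scheme. Concretely, it defines the regularized gap $g_{i,l,n}$, proves Lemma~\ref{lemma:propagation} (if $g_{i_0,l,n+1}$ is small, then either a rare bad event occurs or $g_{i_0,l+1,n}$ is comparably small), and then iterates this from $n_0$ down to $n_0/2$, invoking Proposition~\ref{prop:bad} to bound the probability of the bad events. The many independent randomizations of the deleted rows/columns are what eventually force the gap to open up.

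The difficulty your single-step Schur-complement argument does not address is cancellation in the interlacing identity. You correctly note that a small gap $\lambda_{i+1}(W_n) - \lambda_i(W_n) \le n^{-1-c_0}$ forces $\mu_i - \lambda_i(W_n) \le n^{-1-c_0}$, so the $j=i$ term in \eqref{eq:interlace} is large and positive. But the $j=i-1$ term is negative, and there is nothing preventing $\lambda_i(W_n) - \mu_{i-1}$ from also being of order $n^{-1-c_0}$, in which case the $j=i-1$ term is large and negative and can cancel the $j=i$ term. The Wegner-type estimate you extract from Lemma~\ref{lemma:stbound} lives at scale $\eta = n^{-1+\eps_0}$; it says each window of width $\eta$ contains $\Theta(n^{\eps_0})$ eigenvalues, and says nothing about their separations at the far finer scale $n^{-1-c_0}$. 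Your dyadic decomposition in part~(ii) therefore only controls the indices $j$ with $|\mu_j - \lambda_i| \gtrsim \eta$; the (up to $\Theta(n^{\eps_0})$) indices with $|\mu_j - \lambda_i| \ll \eta$ are exactly the ones left uncontrolled, and bounding those contributions would itself require a sub-$\eta$ gap lower bound --- the very statement being proved. Because the ``cancellation'' event is essentially the same event as the ``small gap'' event, the one-step argument cannot extract a contradiction; the backwards propagation over $\Theta(n)$ minors is precisely the mechanism that breaks this circularity. (A minor additional point: even granting your bound on the $j \ne i$ terms, the conclusion should be $|v_i^* X|^2 \le n^{-1-c_0+o(1)}$ rather than $n^{-c_0+o(1)}$; this does not affect the structure of the argument, but the cancellation gap does.)
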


\subsection{Overview}

The paper is organized as follows.  In Section \ref{sec:tools}, we described our notation and introduce some tools we will need to prove our main results.  Section  \ref{sec:stability} contains our results concerning the stability of equation \eqref{eq:prst}. This stability 
is critical in all arguments using the Stieltjes transform to study the eigenvalues at a local scale. Some material in this section 
has appeared in earlier publications \cite{AGZ,B,K,V}.   
 Section \ref{sec:esd} is devoted to proving a local limit law for the ESD of $W_n$.  We also prove Theorem \ref{thm:corr} in Section \ref{sec:esd}.  Next, we show the eigenvectors of Wigner random matrices with external source are delocalized in Section \ref{sec:eigenvectors}. 
 Section \ref{sec:gap} contains the proof of Theorem \ref{thm:gap}.  Finally in Section \ref{sec:4moment}, we prove Theorem \ref{thm:4moment}.  
The approach here is more or less the same as in \cite{TVuniv}. However, due to the appearance of the external source,
we need to make  a number of delicate modification  and also introduce several new arguments.

\subsection*{Acknowledgements}
We are grateful to T. Tao for  assistance concerning Section \ref{sec:stability}.  We thank A. Kuijlaars for a useful e-mail exchange.  We also thank the anonymous referees for corrections.

\section{General tools and notation} \label{sec:tools}

In this section, we describe our notation and collect some tools we will need to prove Theorem \ref{thm:4moment}, Theorem \ref{thm:corr}, and Theorem \ref{thm:gap}.  

\subsection{Notation} \label{sec:notation}
%We will let $\mu_n$ denote the empirical spectral measure of $D_n$ and let $g_n(z)$ be the Stieltjes transform of $\mu_n$.  

%For an interval $I \subset \R$, we let $N_I$ denote the number of eigenvalues of $W_n$ in $I$.  

We consider $n$ as an asymptotic parameter tending to infinity.  We use $X \ll Y$, $Y \gg X$, $Y=\Omega(X)$, or $X=O(Y)$ to denote the bound $|X| \leq CY$ for all sufficiently large $n$ for some constant $C$.  Notations such as $X \ll_k Y$, $X = O_k(Y)$ mean that the hidden constant $C$ depends on another constant $k$.  $X=o(Y)$ or $Y=\omega(X)$ means that $X/Y \rightarrow 0$ as $n \rightarrow \infty$.  The rate of decay here will be allowed to depend on other parameters.  We write $X=\Theta(Y)$ for $Y \ll X \ll Y$.  

It will be convenient to introduce the following notation for frequent events depending on $n$, in increasing order of likelihood.  

\begin{definition}[Frequent events]\label{freq-def}  Let $E$ be an event depending on $n$.
\begin{itemize}
\item $E$ holds \emph{asymptotically almost surely} if $\P(E) = 1-o(1)$.
\item $E$ holds \emph{with high probability} if $\P(E) \geq 1-O(n^{-c})$ for some constant $c>0$.
\item $E$ holds \emph{with overwhelming probability} if $\P(E) \geq 1-O_C(n^{-C})$ for \emph{every} constant $C>0$ (or equivalently, that $\P(E) \geq 1 - \exp(-\omega(\log n))$).
\item $E$ holds \emph{almost surely} if $\P(E)=1$.  
\end{itemize}
\end{definition}

We remind the reader that $W_n$ is the $n \times n$ Hermitian matrix defined by \eqref{eq:Wn}, where $M_n$ satisfies condition {\bf C0} and $\{D_n\}_{n \geq 1}$ satisfies condition {\bf C1} with measure $\mu_D$.  As discussed above the limiting ESD of $W_n$ has density, which we denote by $\rho$.  

For any interval $I \subset \mathbb{R}$, we define
\begin{equation} \label{eq:def:NI}
	N_I := n \mu_{W_n}(I) = \# \{ 1 \leq i \leq n : \lambda_i(W_n) \in I \}, 
\end{equation}
and let $|I|$ denote the length of $I$.  

We let $\sqrt{-1}$ denote the imaginary unit and reserve $i$ as an index.  We let $\#E$ and $|E|$ denote the cardinality of the set $E$.  For a matrix $A$, let $\|A\|$ denote the spectral norm of $A$; for a vector $v$, we let $\|v\|$ denote the Euclidian norm of $v$.  

\subsection{Tools}

We introduce the following lemmas from \cite{TVuniv, TVedge}. 

\begin{lemma}[Projection Lemma; \cite{TVedge}] \label{lemma:proj}
Let $X=(\xi_1, \ldots \xi_n) \in \C^n$ be a random vector whose entries are independent with mean zero, unit variance, and bounded in magnitude by $K$ almost surely for some $K$, where $K \geq 10(\E|\xi|^4+1)$.  Let $H$ be a subspace of dimension $d$ and $\pi_H$ the orthogonal projection onto $H$.  Then
$$ \P(|\|\pi_H(X)\| - \sqrt{d}| \geq t) \leq 10 \exp \left( - \frac{t^2}{10K^2} \right). $$
In particular, one has
$$ \|\pi_H(X)\| = \sqrt{d} + O(K \log n) $$
with overwhelming probability.  

The same conclusion holds (with $10$ replaced by another explicit constant) if one of the entries $\xi_j$ of $X$ is assumed to have variance $c$ instead of $1$, for some absolute constant $c>0$.  
\end{lemma}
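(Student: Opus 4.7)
The plan is to apply Talagrand's concentration inequality for convex, Lipschitz functions of independent bounded random variables to the function $f(X) := \|\pi_H(X)\|$. The function $f$ is $1$-Lipschitz in $X$ (since $\pi_H$ has operator norm at most $1$) and convex (as a seminorm). Since the entries of $X$ are independent and almost surely bounded in magnitude by $K$, Talagrand's inequality yields
$$ \P(|f(X) - M_f| \geq t) \leq 4 \exp(-c t^2 / K^2) $$
for some absolute constant $c > 0$, where $M_f$ denotes a median of $f(X)$.

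Next, I would identify $M_f$ with $\sqrt{d}$ up to an additive error of $O(K)$. Writing $P := \pi_H$, so that $\|\pi_H(X)\|^2 = X^{*} P X$, and using independence together with mean zero and unit variance of the $\xi_i$, one gets
$$ \E f(X)^2 = \sum_{i} P_{ii}\, \E |\xi_i|^2 = \tr(P) = d. $$
Integrating the tail from the previous step gives $\Var(f(X)) = O(K^2)$, hence $(\E f)^2 = d - O(K^2)$. Combined with Jensen's inequality $(\E f)^2 \leq d$, this forces $|\E f - \sqrt{d}| = O(K)$ when $d \geq K^2$, while the trivial bound $\E f \leq \sqrt{d} \leq K$ handles the case $d < K^2$. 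Since $|M_f - \E f| \leq \sqrt{\Var(f)} = O(K)$ as well, we conclude $|M_f - \sqrt{d}| = O(K)$.

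Combining the two steps gives, after absorbing the $O(K)$ shift into the constants,
$$ \P(|f(X) - \sqrt{d}| \geq t) \leq 10 \exp(-t^2/(10 K^2)) $$
(using the trivial bound $10 e^{-t^2/(10 K^2)} \geq 1$ for small $t$ to handle the range where the $O(K)$ shift dominates). Choosing $t = C K \log n$ then produces the overwhelming probability statement $\|\pi_H(X)\| = \sqrt{d} + O(K \log n)$. For the variant in which one coordinate has variance $c$ rather than $1$, the only change is in the mean computation: $\E f^2 = \tr(P) + (c-1) P_{jj} = d + O(1)$, which only affects the $O(K)$ error by an absolute constant, so the same argument yields the conclusion with a different explicit constant. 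The main delicacy is the bookkeeping of the median-versus-$\sqrt{d}$ error in the regime $d \lesssim K^2$, where $\sqrt{d}$ is itself of the same order as the error term and the statement is only meaningful after absorbing $O(K)$ into the Talagrand tail.
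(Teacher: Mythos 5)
The paper does not give its own proof of this lemma --- it is imported verbatim from \cite{TVedge} --- so the relevant comparison is with the argument there, which is also a Talagrand concentration argument for the convex $1$-Lipschitz function $X \mapsto \|\pi_H(X)\|$ on a product of bounded sets, followed by identification of the median with $\sqrt{d}$. Your route is therefore essentially the standard one and is correct. Two remarks. First, you locate the median purely from $\E\|\pi_H(X)\|^2 = \tr(\pi_H) = d$ together with the variance bound $\Var(\|\pi_H(X)\|) = O(K^2)$ that you extract from the Talagrand tail itself; as a result the hypothesis $K \geq 10(\E|\xi|^4+1)$ never actually enters your argument. In Tao--Vu that hypothesis is used more directly: one computes $\Var(\|\pi_H(X)\|^2)$ by expanding the quadratic form $X^{\ast}\pi_H X$ (the diagonal $i=j=k=l$ terms produce the fourth moments) and then Chebyshev places the median of $\|\pi_H(X)\|^2$ within $O(\sqrt{dK})$ of $d$. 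Both routes give $|M_f - \sqrt{d}| = O(K)$, and, as you acknowledge, landing precisely on the constants $10$ and $10K^2$ is a bookkeeping matter. Second, since the entries are complex you should say explicitly why Talagrand applies: regard each $\xi_j$ as a point of $\R^2$ supported in the disc of radius $K$, so that $X$ lies in a product of bounded convex sets in $\R^{2n}$, and note that $\|\pi_H(\cdot)\|$ remains convex and $1$-Lipschitz for the Euclidean metric on $\R^{2n}$ induced by the identification $\C^n \cong \R^{2n}$. With that caveat spelled out, the argument is complete.
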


\begin{lemma}[Tail bounds for complex random walks; \cite{TVuniv}] \label{lemma:tail}
Let $1 \leq N \leq n$ be integers, and let $A = (a_{ij})_{1 \leq i \leq N; 1 \leq j \leq n}$ be an $N \times n$ complex matrix whose $N$ rows are orthonormal in $\C^n$, and obeying the incompressibility condition
$$ \sup_{1 \leq i \leq N; 1 \leq j \leq n} |a_{ij}| \leq \sigma $$
for some $\sigma>0$.  Let $\zeta_1, \ldots, \zeta_n$ be independent complex random variables with mean zero, variance $\E|\zeta_j|^2$ equal to $1$, and obeying $\E|\zeta_i|^3 \leq C$ for some $C\geq 1$.  For each $1 \leq i \leq N$, let $S_i$ be the complex random variable
$$ S_i := \sum_{j=1}^n a_{ij} \zeta_j $$
and let $\vec{S}$ be the $\C^N$-valued random variable with coefficients $S_1, \ldots, S_N$.  
\begin{itemize}
\item (Upper tail bounds on $S_i$) For $t \geq 1$, we have 
$$ \P(|S_i| \geq t) \ll \exp(-ct^2) + C \sigma $$
for some absolute constant $c>0$.  
\item (Lower tail bounds on $\vec{S}$) For any $t \leq \sqrt{N}$, one has
$$ \P(\|\vec{S}\| \leq t) \ll O(t/\sqrt{N})^{\lfloor N/4 \rfloor} + CN^4 t^{-3} \sigma. $$
\end{itemize}
\end{lemma}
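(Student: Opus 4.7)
The proof proposal splits naturally into the two bullets, both of which I would attack by comparison to Gaussian sums.

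For the upper tail on $|S_i|$, I would invoke the one--dimensional Berry--Esseen theorem. Fix $i$; then $S_i = \sum_{j} a_{ij}\zeta_j$ is a sum of independent, mean-zero random variables with variance $\sum_j |a_{ij}|^2 \E|\zeta_j|^2 = 1$ and summed third absolute moments
$$ \sum_{j} |a_{ij}|^3\, \E|\zeta_j|^3 \leq \sigma \cdot \sum_{j} |a_{ij}|^2 \cdot \sup_j \E|\zeta_j|^3 \ll C\sigma. $$
Applying Berry--Esseen to the real and imaginary parts of $S_i$ (or in its direct two-dimensional form) yields $|\P(|S_i|\geq t) - \P(|Z|\geq t)| \ll C\sigma$, where $Z$ is a standard complex Gaussian. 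The Gaussian tail $\P(|Z|\geq t) \leq 2e^{-ct^2}$ then closes the bullet.

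For the lower tail on $\|\vec S\|$, the plan is to compare $\vec S$ with a complex Gaussian vector $\vec G$ whose covariance is $AA^* = I_N$ (the orthonormality of the rows of $A$ is crucial here). One has the classical Gaussian small-ball estimate
$$ \P(\|\vec G\| \leq s) \ll (s/\sqrt{N})^{2N} \quad \text{for } 0 \leq s \leq \sqrt{N}, $$
so the target exponent $\lfloor N/4 \rfloor$ in the statement leaves generous slack. To do the comparison I would use the Fourier/characteristic-function approach of Esseen. Writing $\vec S = \sum_{j} \mathbf a_j \zeta_j$ with $\mathbf a_j$ the columns of $A$,
$$ \E\, e^{\sqrt{-1}\,\langle \xi, \vec S\rangle_{\R}} = \prod_{j} \E\, e^{\sqrt{-1}\,\Re(\overline{\langle \xi, \mathbf a_j\rangle}\,\zeta_j)}, $$
and a Taylor expansion of each factor, using unit variance and bounded third moment of $\zeta_j$ together with $\sum_j |\langle \xi, \mathbf a_j\rangle|^2 = \|\xi\|^2$, yields
$$ \bigl|\E\, e^{\sqrt{-1}\,\langle \xi, \vec S\rangle_{\R}}\bigr| \leq e^{-c\|\xi\|^2} + O\bigl(\sigma\,\|\xi\|^3\bigr) $$
on a ball of radius $\ll 1/\sigma^{1/3}$. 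Integrating this estimate against a smooth bump supported in the ball of radius $t$ in $\C^N$ and applying the standard Esseen small-ball inequality produces the main term $(t/\sqrt{N})^{\lfloor N/4\rfloor}$; optimising the smoothing scale balances the sub-Gaussian factor $e^{-c\|\xi\|^2}$ against the $\sigma\|\xi\|^3$ error and generates the $CN^4 t^{-3}\sigma$ correction.

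I expect the main obstacle to be the second bullet, and specifically the handling of the fact that $\zeta_j$ is assumed only to have a bounded third moment (with no density and no sub-Gaussian control). Because of this, one cannot make $\vec S$ absolutely continuous by any cheap truncation, and the Gaussian approximation must be carried out in a smoothed form that loses polynomial factors of $N$ and positive powers of $t^{-1}$; this is precisely why the statement tolerates the relatively weak exponent $\lfloor N/4\rfloor$ and an error of order $N^4 t^{-3}\sigma$ rather than something sharper, and the plan above is tailored to keep every loss within this budget.
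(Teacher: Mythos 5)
The paper does not prove this lemma; it is quoted verbatim from Tao and Vu \cite{TVuniv}, so there is no in-paper argument to compare against. Your upper-tail step is correct and is the standard route: one-dimensional Berry--Esseen with Lyapunov term $\sum_j |a_{ij}|^3 \E|\zeta_j|^3 \le C\sigma\sum_j|a_{ij}|^2 = C\sigma$, followed by a Gaussian tail bound.

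Your lower-tail sketch contains a genuine gap. The hypotheses fix only $\E\zeta_j=0$ and $\E|\zeta_j|^2=1$; nothing constrains $\E\zeta_j^2$. Viewed as a random vector in $\R^{2N}$, $\vec S$ therefore has real covariance governed by \emph{both} $AA^*=I_N$ and the pseudo-covariance $A\,\diag(\E\zeta_j^2)\,A^T$, and this $2N\times 2N$ real covariance can be degenerate, with rank as low as $N$ (take $A$ real and all $\zeta_j$ real: then $\vec S$ lies in an $N$-dimensional real subspace of $\C^N$). In that regime $\E\exp(\sqrt{-1}\langle\xi,\vec S\rangle_{\R})$ equals $1$ identically on the $N$-dimensional orthogonal complement of the support, so the asserted bound $|\E\exp(\sqrt{-1}\langle\xi,\vec S\rangle_{\R})|\le e^{-c\|\xi\|^2}+O(\sigma\|\xi\|^3)$ is false, and the Gaussian you should be comparing to (matching the first two \emph{real} moments of $\vec S$) has small-ball probability of order $(s/\sqrt N)^{N}$, not $(s/\sqrt N)^{2N}$. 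Independently of degeneracy, the cubic error in the log-characteristic function is $\sum_j|\langle\xi,\mathbf{a}_j\rangle|^3\E|\zeta_j|^3\le C\bigl(\max_j|\langle\xi,\mathbf{a}_j\rangle|\bigr)\|\xi\|^2$; since $\|\mathbf{a}_j\|\le\sigma\sqrt N$ this gives $O(C\sigma\sqrt N\,\|\xi\|^3)$, not $O(\sigma\|\xi\|^3)$, so the characteristic function is only under control on a ball of radius $\ll 1/(\sigma\sqrt N)$, not $\ll \sigma^{-1/3}$. Because the target exponent is only $\lfloor N/4\rfloor$, these losses need not be fatal, but as written the two inequalities at the core of your sketch are untrue, and the passage from a characteristic-function estimate to both the $\lfloor N/4\rfloor$ exponent and the $CN^4 t^{-3}\sigma$ error---where all the dimension-dependence in a $2N$-dimensional Esseen inversion has to be tracked---is compressed into ``optimising the smoothing scale.'' The odd shape of the stated bound, with $\lfloor N/4\rfloor$ rather than $N$ or $2N$ and an $N^4$ prefactor, is itself a signal that the proof in \cite{TVuniv} is not a high-dimensional Fourier inversion but an elementary blockwise/iterated use of one-dimensional Berry--Esseen, which sidesteps both pitfalls above at the cost of a weaker exponent.
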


\subsection{$\eps$-nets}

We introduce $\eps$-nets as a convenient way to discretize a compact set.  Let $\eps > 0$.  A set $X$ is an $\eps$-net of a set $Y$ if for any $y \in Y$, there exists $x \in X$ such that $\|x-y\| \leq \eps$.  We will need the following estimate for the maximum size of an $\eps$-net.  

\begin{lemma} \label{lemma:epsnet}
Let $Q$ be a compact subset of $\{z \in \mathbb{C} : |z| \leq M \}$.  Then $Q$ admits an $\eps$-net of size at most
$$ \left( 1 + \frac{2M}{\eps} \right)^2. $$
\end{lemma}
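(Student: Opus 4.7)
The plan is a standard volume-packing argument in $\mathbb{C} \cong \mathbb{R}^2$. First, I would extract a maximal subset $X = \{x_1, \ldots, x_N\} \subset Q$ with the property that $\|x_i - x_j\| \geq \eps$ for all $i \neq j$; compactness of $Q$ guarantees such a maximal $\eps$-separated packing exists (one may also invoke Zorn's lemma directly). By maximality, for every $y \in Q$ there must be some $x_i \in X$ with $\|y - x_i\| < \eps$, since otherwise $X \cup \{y\}$ would be a strictly larger $\eps$-separated subset. Hence $X$ is automatically an $\eps$-net for $Q$.

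It then remains to bound $N = |X|$. Because the points of $X$ are pairwise at distance at least $\eps$, the open disks $B(x_i, \eps/2)$ are pairwise disjoint. Each center $x_i$ satisfies $|x_i| \leq M$, so every such disk is contained in the enlarged disk $\{z \in \mathbb{C} : |z| < M + \eps/2\}$. Comparing two-dimensional Lebesgue measures gives
\[
N \cdot \pi (\eps/2)^2 \;\leq\; \pi (M + \eps/2)^2,
\]
which rearranges to $N \leq (1 + 2M/\eps)^2$, as claimed.

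There is essentially no genuine obstacle to this argument; the entire proof is a one-paragraph exercise. The only minor point that requires a touch of care is the separation convention: taking \emph{open} balls of radius $\eps/2$ ensures pairwise disjointness from the non-strict separation $\|x_i - x_j\| \geq \eps$, and enlarging the enclosing disk from radius $M$ to $M + \eps/2$ correctly accommodates small disks centered at points of $Q$ lying near the boundary of $\{|z| \leq M\}$.
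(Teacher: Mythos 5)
Your proposal is correct and matches the paper's argument essentially verbatim: both extract a maximal $\eps$-separated subset, observe that maximality forces it to be an $\eps$-net, and bound its cardinality by comparing the areas of disjoint radius-$\eps/2$ disks to the enclosing disk of radius $M + \eps/2$.
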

\begin{proof}
Let $\mathcal{N}$ be maximal $\eps$ separated subset of $Q$.  That is, $|z-w| \geq \eps$ for all distinct $z,w \in \mathcal{N}$ and no subset of $Q$ containing $\mathcal{N}$ has this property.  Such a set can always be constructed by starting with an arbitrary point in $Q$ and at each step selecting a point that is at least $\eps$ distance away from those already selected.  Since $Q$ is compact, this procedure will terminate after a finite number of steps.  

We now claim that $\mathcal{N}$ is an $\eps$-net of $Q$.  Suppose to the contrary.  Then there would exist $z \in D$ that is at least $\eps$ from all points in $\mathcal{N}$.  In other words, $\mathcal{N} \cup \{ z \}$ would still be an $\eps$-separated subset of $Q$.  This contradicts the maximal assumption above.  

We now proceed by a volume argument.  At each point of $\mathcal{N}$ we place a ball of radius $\eps/2$.  By the triangle inequality, it is easy to verify that all such balls are disjoint and lie in the ball of radius $M + \eps/2$ centered at the origin.  Comparing the volumes give
$$ |\mathcal{N}| \leq \frac{ \left( M + \eps/2 \right)^2}{ (\eps/2)^2 } = \left( 1 + \frac{2M}{\eps} \right)^2. $$
\end{proof}

\subsection{Truncation}

Let $M_n = (\zeta_{ij})_{i,j=1}^n$ satisfy condition {\bf C0}.  From the union bound, we obtain
\begin{equation} \label{eq:truncation}
	\sup_{1 \leq i \leq j \leq n} |\zeta_{ij}| \leq \log^{C+1} n
\end{equation}
with overwhelming probability.  Following the truncation arguments in \cite{BSbook,TVuniv}, we may redefine the random variables $\zeta_{ij}$ on the events where their magnitude exceeds $\log^{C+1} n$.  It follows that it suffices to prove our main results under the assumption that \eqref{eq:truncation} holds almost surely.  As such, we allow the hidden constant in our asymptotic notation (such as $O,o,\ll$) to depend on the positive constants $C,C'$ from condition {\bf C0}.  

Similarly, following \cite{TVuniv}, we assume the $\zeta_{ij}$ have absolutely continuous distribution in the complex plane.  This assumption ensures that certain events (such as eigenvalue collision) occur with probability zero and can hence be safely ignored.  As in \cite{TVuniv}, the discrete case can be obtained from the continuous case by a standard limiting argument as well as Weyl's perturbation bound (see for instance \cite{Bma}).  Thus, we will henceforth assume the $\zeta_{ij}$ have absolutely continuous distribution in the complex plane and satisfy \eqref{eq:truncation} almost surely in order to prove Theorem \ref{thm:4moment} and Theorem \ref{thm:gap}.  

\section{Stability} \label{sec:stability}

In this section, we study the stability of equation \eqref{eq:prst}. Let us recall that in the Wigner case, the stability of
\eqref{eq:scst} (concerning the semicircle distribution) is straight-forward. On the other hand, with the presence of the external source, the situation is far more complex and requires a detailed proof. 

 Throughout this section, we let $\mu$ be a probability measure on the real line supported on a compact interval $[a,b]$, and let $m_\mu$ be its Stieltjes transform (defined in \eqref{eq:def:stmu}).  

For the remainder of the section, we utilize the following notation.  Let $R > 0$.  We use $z,z'$ to denote elements in the upper half-plane with $|z|,|z'| \leq R$.  We use $X \ll Y$, $Y \gg X$, $Y=\Omega(X)$, or $X=O(Y)$ to denote the bound $|X| \leq CY$ for some constant $C>0$ which may depend on $R$ and $\mu$, but which does not depend on any other parameters.    

We begin by analyzing the pointwise stability of the equation
$$ s = m_{\mu}(z+s).$$

\begin{lemma}[Stability]\label{stab}  
Let $z, s, s'$ be elements of the upper half-plane such that
\begin{equation}\label{szs}
s = m_\mu(z+s) + O(\eps)
\end{equation}
and
\begin{equation}\label{szs-2}
 s' = m_{\mu}(z+s'),
\end{equation}
with $|z| \leq R$ and some small $\eps>0$.  Suppose also that
$$ \Im z + \Im(s) \geq \eta > 0.$$
Then $s,s' = O(1)$ and
$$ s' = s + O(\eps^{1/3}) + O( \eps^{1/2}/\eta^{1/2} ).$$

The same conclusion holds if \eqref{szs-2} is weakened to $s' = m_{\mu}(z+s')+O(\eps)$, so long as one has the additional hypothesis $\Im z + \Im(s') \geq \eta$.
\end{lemma}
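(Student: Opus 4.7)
The plan is to reduce the stability question to an (essentially) algebraic equation in $\delta := s' - s$ and analyze it carefully. The argument proceeds in three stages.

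\emph{Stage 1 (boundedness).} Since $\mu$ is supported in a compact interval, we have the crude bound $|m_\mu(w)| \leq 1/\mathrm{dist}(w, \mathrm{supp}\,\mu)$, so $|m_\mu(z+s)|$ becomes small once $|s|$ exceeds some constant depending on $R$ and the diameter of $\mathrm{supp}\,\mu$. Combined with $s = m_\mu(z+s) + O(\varepsilon)$, this forces $|s| = O(1)$ for $\varepsilon$ small enough; the same argument, using $s' = m_\mu(z+s')$, yields $|s'| = O(1)$.

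\emph{Stage 2 (key identity).} Subtract the two (approximate) equations and use the resolvent identity
\[
\frac{1}{x-z-s'} - \frac{1}{x-z-s} = \frac{\delta}{(x-z-s)(x-z-s')}
\]
to obtain
\[
\delta = m_\mu(z+s') - m_\mu(z+s) + O(\varepsilon) = \delta \cdot I + O(\varepsilon), \qquad I := \int \frac{d\mu(x)}{(x-z-s)(x-z-s')}.
\]
This yields the central identity $\delta(1 - I) = O(\varepsilon)$.

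\emph{Stage 3 (expansion and case analysis).} Expand $I$ in $\delta$ using $\frac{1}{x-z-s'} = \frac{1}{(x-z-s) - \delta}$ and a geometric series, which is legitimate once $|\delta| < \eta$. This gives $I = \phi'(s) + \delta K + O(\delta^2/\eta^3)$, where $\phi(w) := m_\mu(z+w)$ and $K := \int d\mu/[(x-z-s)^2(x-z-s')]$. A Cauchy--Schwarz estimate together with $|x-z-s| \geq \Im(z+s) \geq \eta$ and $\int d\mu/|x-z-s'|^2 = \Im s'/(\Im z + \Im s') \leq 1$ shows $|K| = O(1/\eta)$. Substituting back into the central identity yields the quadratic-in-$\delta$ relation
\[
\delta(1 - \phi'(s)) - \delta^2 K = O(\varepsilon) + O(\delta^3/\eta^3),
\]
to which I would apply a case analysis: either the linear term dominates and $|\delta| = O(\varepsilon/|1-\phi'(s)|)$, or the quadratic term dominates and $|\delta|^2 = O(\varepsilon/|K|)$. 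The two possibilities together with the universal bound $|K| = O(1/\eta)$ and the worst-case behaviour of $|1-\phi'(s)|$ (handled by carrying the expansion one further order, producing a cubic term bounded by $O(1)$) yields the two summands $O(\varepsilon^{1/3})$ and $O(\varepsilon^{1/2}/\eta^{1/2})$ in the claim. The regime $|\delta| \geq \eta$ is handled separately using the a priori boundedness from Stage 1. The alternative hypothesis (swapping which of $s,s'$ solves the exact equation, under $\Im z + \Im s' \geq \eta$) follows by the symmetric argument.

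\emph{Main obstacle.} The delicate point is the case analysis in Stage 3, since we have no universal lower bound on either $|1-\phi'(s)|$ (which vanishes at the edge of the spectrum) or on the Taylor coefficient $K$ (which can be small for degenerate $\mu$). One must therefore carefully track in which regime we sit, how large $|\delta|$ can be relative to $\eta$ (so that the Taylor expansion is valid and the remainder $O(\delta^3/\eta^3)$ can be absorbed), and use the compact-support boundedness to close the bootstrap.
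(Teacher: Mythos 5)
Your Stages 1 and 2 coincide with the paper: bound $s,s'$ by observing $m_\mu$ is small away from the support, subtract to get $\delta(1-I) = O(\eps)$ with $I := \int d\mu(x)/[(x-w)(x-w')]$ where $w = z+s$, $w'=z+s'$. Stage 3 is where you genuinely diverge from the paper, and this is where the gap lies.

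The paper does \emph{not} Taylor-expand $I$ in $\delta$. Instead, it extracts a quantitative lower bound on the deficit $1 - \Re\, I$ directly from the geometry. The two ingredients are: (i) taking imaginary parts of the defining equations yields $\int d\mu/|x-w|^2 \leq 1 + O(\eps/\eta)$ and $\int d\mu/|x-w'|^2 \leq 1$; and (ii) a strict-AM-GM estimate, pointwise in $x$: since $x-w$ and $x-w'$ both lie strictly in the lower half-plane and $x,w,w' = O(1)$, one has
\[
\Re\!\left[\frac{1}{(x-w)(x-w')}\right] \le (1 - c|w-w'|^2)\left(\frac{1}{2|x-w|^2} + \frac{1}{2|x-w'|^2}\right)
\]
for an absolute $c>0$. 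Integrating and comparing with $\Re\, I = 1 + O(\eps/|\delta|)$ produces $|\delta|^2 \ll \eps/\eta + \eps/|\delta|$, from which the two exponents $\eps^{1/3}$ and $\eps^{1/2}/\eta^{1/2}$ fall out. The point is that this gives a \emph{universal} lower bound on the deficit in terms of $|\delta|^2$, valid for every $\mu$, without isolating any single Taylor coefficient.

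Your Stage 3 cannot close as stated. You acknowledge the obstacle yourself: you have only \emph{upper} bounds on $|1-\phi'(s)|$ and $|K|$, but the case analysis needs \emph{lower} bounds on at least one of them, and these simply do not exist uniformly in $\mu$ and $w$. Writing $|K| = O(1/\eta)$ and ``cubic term bounded by $O(1)$'' does not help: when the linear and quadratic coefficients are both tiny, the relation $\delta(1-\phi'(s)) - \delta^2 K = O(\eps) + O(\delta^3/\eta^3)$ collapses to $O(\eps)=O(\delta^3/\eta^3)$, which gives $|\delta|^3 \ll \eta^3 \eps$ only if the cubic coefficient is $\Omega(1/\eta^3)$ --- again a lower bound you do not have. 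There is also a separate circularity in the regime $|\delta|\ge\eta$: when $\eps^{1/3}\ll\eta$ and $\eps^{1/2}/\eta^{1/2}\ll\eta$, the desired conclusion already forces $|\delta|<\eta$, but your expansion only begins once you know this, and the $O(1)$ a priori bound from Stage~1 does not bridge the gap. The missing idea, in short, is the pointwise strict-convexity/AM-GM estimate, which supplies the needed lower bound on $1-\Re\, I$ in a way that is insensitive to degeneracy of the Taylor coefficients of $m_\mu$.
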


\begin{proof}  
We first show that $s=O(1)$.  Suppose for contradiction that $|s| \geq C$ for some large absolute constant $C$.  Since $|z| \leq R$, we then have $|z+s| \geq C/2$ (if $C$ is large enough), and hence $m_{\mu}(z+s) = O(1/C)$ (here we use the assumption that $\mu$ is supported on a compact set).  But then the right-hand side of \eqref{szs} is $O(1/C) + O(\eps)$, which contradicts the hypothesis $|s| \geq C$ for $C$ large enough.  Thus, $s=O(1)$, and similarly $s'=O(1)$.

Write $w := z+s$ and $w' := z+s'$.  Then $w,w'$ are in the upper half-plane.  Our task is now to show that
$$ |w' - w| \ll \eps^{1/3} + \eps^{1/2} / \eta^{1/2}.$$
We may of course assume that $w' \neq w$, as the claim is trivial otherwise.
Subtracting \eqref{szs} from \eqref{szs-2}, we obtain
$$ m_{\mu}(w') - m_{\mu}(w) = w' - w + O(\eps).$$
But from \eqref{eq:def:stmu}, we have
$$
 m_{\mu}(w')-m_{\mu}(w) = (w'-w) \int_\R \frac{d\mu(x)}{(x-w)(x-w')}$$
 and hence
\begin{equation}\label{dmux}
\int_\R \frac{d\mu(x)}{(x-w)(x-w')} = 1 + O\left( \frac{\eps}{|w-w'|} \right).
 \end{equation}
On the other hand, comparing the imaginary parts for both sides of \eqref{szs}, we obtain
$$ \Im m_{\mu}(w) \leq \Im w + O(\eps).$$
From \eqref{eq:def:stmu}, we have
$$ \Im m_{\mu}(w) = (\Im w) \int_\R \frac{d\mu(x)}{|x-w|^2}$$
and hence
$$ \int_\R \frac{d\mu(x)}{|x-w|^2} \leq 1 + O\left( \frac{\eps}{\Im(w)} \right).$$
Since $w=z+s$, we have $\Im(w) \geq \eta$ and thus
$$ \int_\R \frac{d\mu(x)}{|x-w|^2} \leq 1 + O\left( \frac{\eps}{\eta} \right).$$
Similarly (noting that there is no error term in \eqref{szs-2})
$$ \int_\R \frac{d\mu(x)}{|x-w'|^2} \leq 1.$$
Now we return to \eqref{dmux}.  From the arithmetic mean-geometric mean inequality, we have
$$  \left| \frac{1}{(x-w)(x-w')} \right| \leq \frac{1}{2} \frac{1}{|x-w|^2} + \frac{1}{2} \frac{1}{|x-w'|^2}.$$
Since $w \neq w'$, it follows that
$$ \left|\Re \left[ \frac{1}{(x-w)(x-w')} \right] \right| = (1-\delta)\left( \frac{1}{2} \frac{1}{|x-w|^2} + \frac{1}{2} \frac{1}{|x-w'|^2} \right)$$
for some $\delta > 0$.  Then we have
$$ |x-w| = (1+O(\delta)) |x-w'|. $$
and
$$ \angle(x-w, x-w') = O( \delta^{1/2} ).$$
Since $x,w,w' = O(1)$, we obtain $w-w' = O(\delta^{1/2})$.  We conclude that
\begin{equation}\label{rew}
 \Re \left[\frac{1}{(x-w)(x-w')} \right] \leq (1-c|w-w'|^{2}) \left(\frac{1}{2} \frac{1}{|x-w|^2} + \frac{1}{2} \frac{1}{|x-w'|^2} \right)
\end{equation}
for some $c>0$, and hence
$$
\Re \int_\R \frac{d\mu(x)}{(x-w)(x-w')} \leq (1-c|w-w'|^{2})  \left(1 + O\left( \frac{\eps}{\eta} \right) \right).$$
Comparing this against \eqref{dmux} yields
$$ |w-w'|^{2} \ll \frac{\eps}{\eta} + \frac{\eps}{|w-w'|}, $$
and the claim follows.

A similar argument works when there is an error term in \eqref{szs-2}.
\end{proof}

Unfortunately, pointwise stability breaks down when $z$ approaches the real axis.  For instance, let $\mu = \frac{1}{2} \delta_{\sqrt{2}} + \frac{1}{2} \delta_{-\sqrt{2}}$, so that
$$ m_{\mu}(z) = \frac{1}{2} \frac{1}{\sqrt{2}-z} + \frac{1}{2} \frac{1}{-\sqrt{2}-z} = \frac{z}{2-z^2}.$$
Then the equation $s=m_{\mu}(z+s)$ has a triple root $s=\pm 1,0$ when $z=0$, and so no naive analogue of the above lemma works in the limit when $z \to 0$.  In Theorem \ref{thm:stab} below, we use a continuity argument (rather than a pointwise argument) to obtain the desired stability.

\begin{corollary}  \label{unique} 
There exists a unique holomorphic function $z \mapsto s_0(z)$ from the upper half-plane to the upper half-plane such that $s_0(z) = m_{\mu}(z+s_0(z))$ for all $z$ in the upper half-plane.
\end{corollary}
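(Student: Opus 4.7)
My plan is to handle uniqueness as an immediate consequence of Lemma \ref{stab} in the vanishing-error limit, construct $s_0$ first in the region $\{z \in \C^+ : \Im(z) > T_0\}$ for $T_0$ large by a contraction argument, and extend to all of $\C^+$ via an open-closed continuation argument powered by the implicit function theorem.

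Uniqueness is nearly automatic. Suppose $s_0, s_1 : \C^+ \to \C^+$ are both holomorphic and satisfy $s_i(z) = m_\mu(z + s_i(z))$ identically. Fix any $z_0 \in \C^+$; the pair $(s_0(z_0), s_1(z_0))$ satisfies the hypotheses of Lemma \ref{stab} at $z = z_0$, $R > |z_0|$, $\eta = \Im(z_0)$, for every $\eps > 0$, since exact identities are \emph{a fortiori} $O(\eps)$-approximate. Letting $\eps \to 0$ in the conclusion $s_1(z_0) = s_0(z_0) + O(\eps^{1/3}) + O(\eps^{1/2}/\eta^{1/2})$ forces $s_0(z_0) = s_1(z_0)$. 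For existence in the regime $\Im(z) > T_0$ (with $T_0$ depending on $\mu$), the a priori bound $|m_\mu(w)| \leq 1/\Im(w)$ implies the map $s \mapsto m_\mu(z + s)$ sends the closed set $\{s \in \overline{\C^+} : |s| \leq 1\}$ into itself and is a strict contraction there, producing $s_0(z) \in \C^+$. Holomorphicity in $z$ follows from the implicit function theorem applied to $F(z, s) := s - m_\mu(z + s)$, since $\partial_s F = 1 - m_\mu'(z+s)$ is close to $1$ in this regime.

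To extend $s_0$ to all of $\C^+$, let $U \subseteq \C^+$ be the (open) set on which a holomorphic solution with values in $\C^+$ exists. By the uniqueness already established, this solution is unambiguous on $U$, and the bound $|s_0(z)| = |m_\mu(z + s_0(z))| \leq 1/\Im(z)$ makes $s_0$ locally bounded on $\C^+$. Suppose toward a contradiction that $U \neq \C^+$; pick $z_* \in \partial U \cap \C^+$ and a sequence $z_n \to z_*$ in $U$, and extract a subsequential limit $s_* := \lim s_0(z_n)$. Passing to the limit in the fixed-point equation gives $s_* = m_\mu(z_* + s_*)$, and the identity
\[ \Im(m_\mu(w)) = \Im(w) \int_{\R} \frac{d\mu(x)}{|x - w|^2} \]
applied at $w = z_* + s_*$ forces $\Im(s_*) > 0$, because $\Im(z_* + s_*) \geq \Im(z_*) > 0$ makes the right-hand side strictly positive. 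Then
\[ |m_\mu'(z_* + s_*)| \leq \int_{\R} \frac{d\mu(x)}{|x - z_* - s_*|^2} = \frac{\Im(s_*)}{\Im(z_* + s_*)} < 1, \]
so $\partial_s F(z_*, s_*) \neq 0$, and the implicit function theorem produces a holomorphic $\tilde s$ on a neighborhood of $z_*$ solving the same equation. Uniqueness identifies $\tilde s$ with $s_0$ on the overlap with $U$, placing $z_* \in U$ and contradicting $z_* \in \partial U$.

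I expect the main obstacle to be precisely this last extension step — controlling $s_0(z_n)$ as $z_n$ approaches a hypothetical boundary point of $U$, since one must simultaneously prevent $\Im(s_0(z_n))$ from collapsing to zero and prevent $1 - m_\mu'(z + s_0)$ from vanishing. Pleasantly, both issues are handled by the same mechanism: the self-consistent equation together with $\Im(z) > 0$ automatically forces the ratio $\Im(s_0)/\Im(z + s_0)$ to be strictly less than one, so no genuinely new ingredient beyond Lemma \ref{stab} (for uniqueness) and the Stieltjes representation of $m_\mu$ is required.
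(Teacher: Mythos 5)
Your argument is correct, and it takes a genuinely different route from the paper's. The paper proves existence by a global, topological argument: it observes that $w \mapsto w - m_\mu(w)$ is asymptotic to $w + O(1)$ at infinity and maps the boundary of $\C^+$ appropriately, invokes a winding-number argument to deduce surjectivity onto $\C^+$, and then uses the continuity furnished by Lemma~\ref{stab} together with invariance of domain to conclude that the inverse $z \mapsto w$ is holomorphic (the paper also notes in a footnote that one can appeal to the free convolution with the semicircle law). You instead build the solution locally near $\Im(z) = \infty$ by the Banach fixed point theorem, get holomorphicity there from the implicit function theorem, and then propagate by an open-closed (analytic continuation) argument; the key analytic input that makes the continuation close is the observation that the self-consistent equation forces $\int d\mu(x)/|x - z - s_*|^2 = \Im(s_*)/(\Im(z_*) + \Im(s_*)) < 1$, so $1 - m_\mu'(z_* + s_*)$ never degenerates. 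Both proofs are valid; the paper's is shorter and more structural but leans on topological degree theory, while yours is more elementary and constructive, trades the winding-number step for a contraction estimate, and makes explicit the quantitative nondegeneracy of $\partial_s F$ that the paper only asserts implicitly when it says $w - m_\mu(w)$ has no stationary points. One small point worth being explicit about when you extract the subsequential limit: you should note that $\Im(s_*) \geq 0$ from the limit already guarantees $\Im(z_* + s_*) \geq \Im(z_*) > 0$, which is what justifies passing to the limit in the fixed-point equation before you upgrade to $\Im(s_*) > 0$; as written this is implicit but correct.
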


\begin{proof} 
Uniqueness follows by sending $\eps \to 0$ in Lemma \ref{stab}.  For existence, observe that the function $w \mapsto w-m_{\mu}(w)$ is asymptotic to $w+O(1)$ for large $w$, and takes values in the lower half-plane when $w$ is real.  A winding number argument then shows that for every $z$ in the upper half-plane, there is a $w$ in the upper half-plane with $w-m_{\mu}(w)=z$; setting $s_0(z) := m_{\mu}(w)$, we conclude that $s_0(z)=m_{\mu}(z+s_0(z))$.  From Lemma \ref{stab} it follows that $w$ and $s_0$ depend continuously on $z$.  As $w \mapsto w-m_{\mu}(w)$ is holomorphic and invertible on the range of $w$ (which is open, by the invariance of domain theorem), we conclude that $w-m_{\mu}(w)$ has no stationary points (this can also be obtained from a variant of Lemma \ref{stab}) and so the inverse map $z \mapsto w$ is holomorphic, and the claim follows.\footnote{One can also prove existence by using the free convolution of $\mu$ with the semicircle law.  See, for instance, \cite{AGZ,B,K,V} and references therein.} 
\end{proof}

It is easy to see that this function $z \mapsto s_0(z)$ is asymptotic to $-\frac{1}{z}$ as $|z| \to\infty$, and so by the Herglotz representation theorem (see for instance \cite{AK}), it is the Stieltjes transform of some probability measure $\nu$.

We now verify that the function $z \mapsto s_o(z)$ is H\"older continuous.  

\begin{lemma}[H\"older continuity]\label{hold}  
For $z,z'$ in the upper half-plane with $|z|, |z'| \leq R$, we have
$$ |s_0(z)-s_0(z')| \ll |z-z'|^{1/3}.$$
\end{lemma}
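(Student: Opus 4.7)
The plan is to follow the approach of Lemma \ref{stab}, exploiting the crucial simplification that both $s := s_0(z)$ and $s' := s_0(z')$ satisfy their fixed-point equations \emph{exactly}. This will eliminate the $\eta$-dependent correction $(\eps/\eta)^{1/2}$ appearing there, producing a bound that does not degenerate as $z, z'$ approach the real axis. Setting $w := z + s$ and $w' := z' + s'$, so that $s = m_\mu(w)$ and $s' = m_\mu(w')$, the resolvent identity
\[
m_\mu(w) - m_\mu(w') = (w - w')\int_\R \frac{d\mu(x)}{(x - w)(x - w')} =: (w - w')\, A,
\]
combined with the tautology $w - w' = (z - z') + (s - s')$, yields the key relation $(w - w')(1 - A) = z - z'$. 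The proof thus reduces to bounding $|1 - A|$ from below by a constant multiple of $|w - w'|^2$.

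Taking imaginary parts of $s = m_\mu(w)$ gives
\[
\int_\R \frac{d\mu(x)}{|x - w|^2} = \frac{\Im s}{\Im w} = \frac{\Im s}{\Im z + \Im s} \leq 1,
\]
and likewise for $w'$. The key pointwise estimate is exactly the inequality \eqref{rew} from the proof of Lemma \ref{stab}:
\[
\Re\left[\frac{1}{(x - w)(x - w')}\right] \leq \bigl(1 - c\, |w - w'|^2\bigr) \cdot \tfrac{1}{2}\left(\frac{1}{|x - w|^2} + \frac{1}{|x - w'|^2}\right), \quad x \in \R,
\]
for some $c > 0$ depending only on $R$ and the support of $\mu$. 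It follows from the algebraic identity $\Re[(x-w)(x-w')] = \tfrac{1}{2}(|x-w|^2 + |x-w'|^2) - \tfrac{1}{2}\bigl((\Re(w-w'))^2 + (\Im w + \Im w')^2\bigr)$, the observation that $(\Re(w-w'))^2 + (\Im w + \Im w')^2 - |w-w'|^2 = 4\, \Im w \cdot \Im w' \geq 0$, and the uniform upper bound on $|x-w|, |x-w'|$ that comes from the compactness of the support of $\mu$ together with the a priori estimate $s, s' = O(1)$. Integrating against $d\mu$ and using the two integral bounds above, one obtains $\Re A \leq 1 - c\, |w - w'|^2$, and hence $|1 - A| \geq 1 - \Re A \geq c\, |w - w'|^2$.

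Substituting into $(w - w')(1 - A) = z - z'$ gives $|w - w'|^3 \ll |z - z'|$, so $|w - w'| \ll |z - z'|^{1/3}$. The desired conclusion $|s_0(z) - s_0(z')| \ll |z - z'|^{1/3}$ then follows from $s - s' = (w - w') - (z - z')$ together with the crude bound $|z - z'| \leq (2R)^{2/3} |z - z'|^{1/3}$, which holds since $|z - z'| \leq 2R$. The main obstacle is conceptual rather than technical: one must recognize that, because both fixed-point equations hold \emph{without any $O(\eps)$ error}, the integrals $\int d\mu/|x - w|^2$ and $\int d\mu/|x - w'|^2$ are bounded by $1$ with no $O(\eps/\eta)$ slack, so the cube-root bound propagates unconditionally in $z$ and $z'$. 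Positivity of $\Im z, \Im z'$ enters only through the nonnegative term $4\, \Im w \cdot \Im w'$, which is automatic in the upper half-plane; no quantitative lower bound on $\Im z$ or $\Im z'$ is needed anywhere.
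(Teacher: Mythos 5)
Your proof is correct and takes essentially the same route as the paper: both pass to $w = z + s_0(z)$, $w' = z' + s_0(z')$, use the resolvent identity to express the difference of fixed-point equations in terms of $A := \int d\mu/((x-w)(x-w'))$, take imaginary parts to get $\int d\mu/|x-w|^2 \leq 1$ (and likewise for $w'$), and then invoke the pointwise inequality \eqref{rew} to bound $\Re A$ away from $1$ by $c|w-w'|^2$, yielding $|w-w'|^3 \ll |z-z'|$. The only material difference is that you supply a self-contained derivation of \eqref{rew} via the identity $\Re[(x-w)(x-w')] = \tfrac{1}{2}(|x-w|^2 + |x-w'|^2) - \tfrac{1}{2}((\Re(w-w'))^2 + (\Im w + \Im w')^2)$ and the observation that $(\Re(w-w'))^2 + (\Im w + \Im w')^2 = |w-w'|^2 + 4\,\Im w\,\Im w' \geq |w-w'|^2$, together with the a priori compactness bound on $|x-w|^2 + |x-w'|^2$; the paper simply cites \eqref{rew} as established in the proof of Lemma \ref{stab}. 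Your added detail is sound (and you correctly observe that the last step, replacing $-\frac{|w-w'|^2}{2|x-w|^2|x-w'|^2}$ by $-c|w-w'|^2 \cdot \frac{1}{2}(\tfrac{1}{|x-w|^2}+\tfrac{1}{|x-w'|^2})$, requires the uniform upper bound on $|x-w|,|x-w'|$ furnished by the compact support of $\mu$ and the $O(1)$ bound on $s_0$). The closing step handling the $|z-z'|$ term via $|z-z'| \leq (2R)^{2/3}|z-z'|^{1/3}$ is fine.
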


\begin{proof}  
Write $w = z + s_0(z)$ and $w' = z' + s_0(z')$.  Then
$$ w - m_{\mu}(w) = z$$
and
$$ w' - m_{\mu}(w') = z'.$$
Subtracting as in the proof of Lemma \ref{stab}, we conclude that
$$ \int_\R \frac{d\mu(x)}{(x-w)(x-w')} = 1 + O \left( \frac{|z-z'|}{|w-w'|} \right).$$
Also, taking imaginary parts as in the proof of Lemma \ref{stab}, we have
$$ \int_\R \frac{d\mu(x)}{|x-w|^2} \leq 1$$
and
$$ \int_\R \frac{d\mu(x)}{|x-w'|^2} \leq 1.$$
Using \eqref{rew}, we obtain the claim.
\end{proof}

\begin{remark}
Note that the Stieltjes transform of the semicircle law is in fact H\"older continuous of order $1/2$ rather than $1/3$, so some improvement to Lemma \ref{hold} is presumably possible, but we will not pursue this.
\end{remark}

Now we investigate the stability of $s=m_{\mu}(z+s)$ near the real line.  

\begin{theorem}  \label{thm:stab} 
Let $R>0$ and $\eps > 0$.  Then there exists $\delta > 0$ such that if $N>1$ and $s$ is the Stieltjes transform of a probability measure which satisfies
$$ |s(z)-m_{\mu}(z+s(z))| \leq \delta$$
for all $z$ with $\Im(z)>1/N$ and $|z| \leq R$, then
$$ |s(z)-s_0(z)| \leq \eps$$
for all $z$ with $\Im(z)>1/N$ and $|z| \leq R$.
\end{theorem}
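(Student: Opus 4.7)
The plan is a connectedness argument on the rectangle $\Omega_N := \{z : |z| \leq R,\ \Im(z) > 1/N\}$. I will show that the relatively closed set $A := \{z \in \Omega_N : |s(z) - s_0(z)| \geq \eps\}$ is empty by deriving a contradiction at the point of $A$ having maximal imaginary part.

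First, for $\Im(z) \geq 1$ and $|z| \leq R$, Lemma \ref{stab} applies with $\eta := \Im(z) + \Im(s(z)) \geq 1$, giving $|s(z) - s_0(z)| \ll \delta^{1/3} + \delta^{1/2}$, which is at most $\eps/2$ once $\delta$ is small. Hence $A \subset \{\Im(z) < 1\}$. Suppose $A \neq \emptyset$ and let $z_* \in A$ maximize the imaginary part. By continuity, $|s(z_*) - s_0(z_*)| = \eps$, and $|s(z) - s_0(z)| < \eps$ for all $z \in \Omega_N$ strictly above $z_*$. I next lower-bound $\eta$ at $z_*$: since $s_0$ is the Stieltjes transform of a probability measure $\nu$ with compact support (the equation $s_0 = m_\mu(z+s_0)$ combined with $s_0(z) \sim -1/z$ at infinity forces $s_0$ to be real and bounded outside a compact set), we have $\Im(s_0(z)) \geq c_0 \Im(z)$ for $|z| \leq R$, where $c_0 = c_0(\mu, R) > 0$. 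Combining this with $|s - s_0| < \eps$ just above $z_*$ and sending $z \to z_*$, continuity of $s$ gives $\Im(s(z_*)) \geq c_0 \Im(z_*) - \eps$, so $\eta \geq (1+c_0)\Im(z_*) - \eps$ at $z_*$.

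In the bulk case $\Im(z_*) \geq 2\eps/c_0$, this yields $\eta \geq \eps$, and Lemma \ref{stab} at $z_*$ gives $\eps \ll \delta^{1/3} + \delta^{1/2}/\eps^{1/2}$, a contradiction for $\delta$ sufficiently small (depending only on $\eps$, $\mu$, $R$). The main obstacle is the edge case $\Im(z_*) < 2\eps/c_0$, where the $\eta$-bound alone is insufficient. To treat it, I plan to apply the bulk analysis at the nearby higher point $z_*' := z_* + i\tau$ with $\tau := 2\eps/c_0$; there $\Im(z_*') \geq 2\eps/c_0$ and the previous reasoning gives $|f(z_*')| \leq \eps/10$, where $f := s - s_0$. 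To transfer this control down to $z_*$, I subtract the approximate equations for $s$ and $s_0$ and invoke the resolvent identity $m_\mu(w_1) - m_\mu(w_2) = (w_1 - w_2)\int d\mu(t)/((t-w_1)(t-w_2))$ to obtain $f(z)(1 - I(z)) = O(\delta)$, where $I(z) := \int d\mu(t)/((t - z - s(z))(t - z - s_0(z)))$. The factor $1 - I(z)$ is close to the derivative $g'(w_0(z))$ of the map $g(w) := w - m_\mu(w)$ evaluated at $w_0(z) := z + s_0(z)$, which is nonzero on $\C^+$ by the invertibility argument in Corollary \ref{unique}. The delicate step, where I expect the bulk of the technical work, is to establish a quantitative lower bound on $|1 - I(z_*)|$ that does not degenerate as $\Im(z_*) \to 0$, from which $|f(z_*)| = O(\delta) < \eps$ would follow, contradicting $|f(z_*)| = \eps$.
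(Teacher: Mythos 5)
Your initial setup is sound and even shares some spirit with the paper's proof: both arguments exploit the stability of Lemma \ref{stab} for $\Im(z)$ bounded below, and both push control downward along vertical segments. The bulk case $\Im(z_*)\geq 2\eps/c_0$ is handled correctly. The gap is precisely where you flag it, and it is not a technicality that can be patched by more careful estimates: the quantitative lower bound on $|1-I(z_*)|$ that you need does not exist in the generality of the theorem. The quantity $1-I(z)$ is, up to $O(\eps)$ once $s\approx s_0$, the derivative $g'(w_0(z))$ of $g(w)=w-m_\mu(w)$ at $w_0(z)=z+s_0(z)$, and $g'$ vanishes on the real axis at the edges of the support of the push-forward measure $\nu$. (The paper's own example $\mu=\tfrac12\delta_{\sqrt2}+\tfrac12\delta_{-\sqrt2}$, introduced after Lemma \ref{stab} precisely to show pointwise stability fails near the real axis, makes the same point: there $g(w)=w(1-w^2)/(2-w^2)$ and $g'$ has real zeros at $w^2=(5\pm\sqrt{17})/2$.) The theorem is stated for an arbitrary compactly supported $\mu$, with no hypothesis like condition \textbf{(A)} excluding edges or density zeros from the rectangle $|z|\leq R$, so $z_*$ can sit above such a degenerate point and $1-I(z_*)$ can be arbitrarily small. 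Your contraction-mapping/implicit-function step then gives no information, and the contradiction does not materialize.

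The paper avoids this by abandoning quantitative pointwise control near the real axis altogether. It argues by contradiction and compactness: extract a sequence $\delta_n\to 0$, $z_n\to z_\infty$, $s_n(z_n)\to s_\infty$, $s_0(z_n)\to s_\infty^0$ with $|s_\infty-s_\infty^0|\geq\eps$, then use Lemma \ref{stab} to obtain a dichotomy (either $s_n(z)=s_0(z)+o(1)$, or $\Im z$ and $\Im s_n(z)$ are both $o(1)$), and finally split according to whether $\mu$ charges the real interval between $w_\infty:=z_\infty+s_\infty$ and $w_\infty^0:=z_\infty+s_\infty^0$. If it does, a point-of-density argument plus the intermediate value theorem along a vertical segment forces $\Im s_n$ to be bounded away from zero, contradicting the dichotomy; if it does not, $m_\mu$ is analytic on that interval and the relation $x-m_\mu(x)=z_\infty$ holds on a real interval, which by analytic continuation is absurd. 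This is a qualitative argument that never needs $g'$ to be bounded away from zero. To salvage your proposal you would need to replace the linearization bound at $z_*$ with some analogue of this branch-tracking/analytic-continuation argument, which is a genuinely new idea rather than a refinement of your estimate.

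Two smaller points worth noting: your claim that $\nu$ has compact support deserves a real justification (it follows since $\nu$ is the free additive convolution of $\mu$ with the semicircle law, both compactly supported, but ``$s_0(z)\sim -1/z$'' alone does not give it), and when you ``apply the bulk analysis at $z_*'$'' you must re-derive the $\eta$-lower-bound at $z_*'$ from $|f(z_*')|<\eps$ rather than $=\eps$; this works but should be said.
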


\begin{proof}  
Suppose this is not the case.  Then there exists a sequence $\delta_n>0$ tending to zero, a sequence $s_n$ of Stieltjes transforms of probability measures, a sequence $N_n \geq 1$, and a sequence $z_n$ of complex numbers with $\Im(z_n)>1/N_n$ and $|z| \leq R$, such that
$$ |s_n(z)-m_{\mu}(z+s_n(z))| \leq \delta_n$$
for all $z$ with $\Im(z)>1/N_n$ and $|z| \leq R$, but
\begin{equation}\label{soo}
 |s_n(z_n)-s_0(z_n)| > \eps.
\end{equation}
By passing to a subsequence, we may assume that $z_n$ converges to a limit $z_\infty$.  From Lemma \ref{stab}, both $s_n(z_n)$ and $s_0(z_n)$ are uniformly bounded, so we may assume that they converge to some limits $s_\infty$ and $s^0_\infty$ respectively, with $|s_\infty-s^0_\infty| \geq \eps$.

For the remainder of the proof, we view $n$ as an asymptotic parameter tending to infinity.  We use $X = o(1)$ to mean $X \rightarrow 0$ as $n \rightarrow 0$.  By construction, we have
$$ s_n(z) = m_{\mu}(z+s_n(z)) + o(1)$$
and
$$ s_0(z) = m_{\mu}(z+s_0(z))$$
for all $z$ with $\Im(z)>1/N_n$ and $|z| \leq R$.  Applying Lemma \ref{stab}, we conclude the existence of some $\eta_n > 0$ with $\eta_n \to 0$ such that 
$$ s_n(z) = s_0(z) + o(1)$$
whenever $\Im(z) + \Im(s_n(z)) \geq \eta_n$ and $|z| \leq R$.  To put it another way, we have the following dichotomy: whenever $\Im(z)>1/N_n$ and $|z| \leq R$, then at least one of
\begin{equation}\label{snop}
 s_n(z) = s_0(z) + o(1)
\end{equation}
and
\begin{equation}\label{snop-2}
 \Im(z), \Im(s_n(z)) \leq \eta_n = o(1)
 \end{equation}
are true.

This implies that
$$\Im(z_n) + \Im(s_n(z_n)) = o(1);$$
in particular, $z_\infty$ and $s_\infty$ are real, and $N_n$ must go to infinity as $n \to \infty$.

Seeking a contradiction, suppose that $s_\infty^0$ is not real.  Then for $n$ sufficiently large, $\Im (s_0(z_n))$ is bounded away from zero, and thus by H\"older continuity $\Im(s_0(z_n + \sqrt{-1} t \eta_n))$ is also bounded away from zero for $0 \leq t \leq 1$.  But from the dichotomy, we have $s_n(z_n+\sqrt{-1}\eta_n) = s_0(z_n + \sqrt{-1} \eta_n) + o(1)$. Thus, by a continuity argument, we have $s_n(z_n+\sqrt{-1}t\eta_n) = s_0(z_n + \sqrt{-1} \eta_n) + o(1)$ for all $0 \leq t \leq 1$ (since there is no overlap between the two choices \eqref{snop}, \eqref{snop-2} of the dichotomy), and so $s_n(z_n) = s_0(z_n)+o(1)$, contradicting \eqref{soo}.  We conclude that $s_\infty^0$ is real.

Let $I$ be the interval in $\R$ between $w_\infty^0 := s_\infty^0 + z_\infty$ and $w_\infty := s_\infty + z_\infty$.  By \eqref{soo}, we have $|s_\infty-s_\infty^0| \geq \eps$ and hence $I$ has positive length.

Suppose first that $\mu$ is not entirely vanishing on the interior of $I$.  Then by compactness, there must exist a point $x$ in the interior of $I$ which is a point of density for $\mu$ in the sense that $\liminf_{r \to 0} \frac{1}{2r} \mu([x-r,x+r]) > 0$.  This implies that $m_{\mu}(x+\sqrt{-1}r)$ is bounded away from zero for sufficiently small $r$.

On the other hand, we have $s_n(z_n) = s_\infty + o(1)$, and from the dichotomy (and H\"older continuity) we also have $s_n(z_n+\sqrt{-1} \eta_n) = s_\infty^0 + o(1)$.  The dichotomy (and H\"older continuity) also shows that for all $0 \leq t \leq 1$ we have $\Im s_n(z_n+\sqrt{-1}t\eta_n) = o(1)$.  Thus, by the intermediate value theorem, there must exist some $0 \leq t_n \leq 1$ such that $\Re (z_n + s_n(z_n+\sqrt{-1}t_n \eta_n)) = x$, thus
$$z_n + s_n(z_n+\sqrt{-1}t_n \eta_n) = x + \sqrt{-1}r_n$$
for some $r_n=o(1)$.  But then on applying $m_{\mu}$, we see that $\Im s_n(z_n+\sqrt{-1}t\eta_n)$ is bounded away from zero, a contradiction.

Thus, we may assume that $\mu$ is vanishing on the interior of $I$.  This implies $m_{\mu}$ is complex analytic in a neighborhood of the interior.  As before, we have that for all $x \in I$, we can find $t_n$ such that
$$w_n := z_n + s_n(z_n+\sqrt{-1}t_n \eta_n) = x + o(1).$$
Since $w_n - m_{\mu}(w_n) = z_n +o(1) = z_\infty + o(1)$, we conclude that $x-m_{\mu}(x) = z_\infty+o(1)$, and thus on taking limits one has
$$ x-m_{\mu}(x) = z_\infty$$
for all $x$ in the interior of $I$.  But then by analytic continuation this forces $w-m_{\mu}(w) = z_\infty$ for all $w$ in the upper half-plane, which is absurd.
\end{proof}

\section{Asymptotics for the ESD and the proof of Theorem \ref{thm:corr}} \label{sec:esd}

This section is devoted to proving a local limit law for $\mu_{W_n}$.  We begin by introducing some new notation.  We let $m_n$ denote the Stieltjes transform of $W_n$.  That is, 
$$ m_n(z) := m_{W_n}(z) =  \int_{\mathbb{R}} \frac{ d \mu_{W_n}(x)}{x - z}, \quad z \in \mathbb{C}^+. $$
Under the assumption that $\{D_n\}_{n \geq 1}$ satisfies condition {\bf C1} with measure $\mu_D$, it follows from our discussion above that $m_n(z) \rightarrow m(z)$ almost surely as $n \rightarrow \infty$ for each $z \in \mathbb{C}^+$.  Here $m$ is the Stieltjes transform of a probability measure with continuous density $\rho$.  In particular, 
$$ m(z) := \int_{\mathbb{R}} \frac{\rho(x)}{x - z} dx, \quad z \in \mathbb{C}^+. $$
Furthermore $m$ is the unique Stieltjes transform that satisfies \eqref{eq:prst} for all $z$ in the upper half plane.  

We let $g_n$ denote the Stieltjes transform of $\mu_{D_n}$.  In other words,
$$ g_n(z) := \int_{\mathbb{R}} \frac{ d \mu_{D_n}(x) }{ x - z }, \quad z \in \mathbb{C}^+. $$
If $\{D_n\}_{n \geq 1}$ satisfies condition {\bf C1} with measure $\mu_D$, we let $g$ denote the Stieltjes transform of $\mu_D$.  In this case, it follows that $g_n(z) \rightarrow g(z)$ as $n \rightarrow \infty$ for each $z \in \mathbb{C}^+$.  

In general, the entries of $D_n$ and $M_n$ are allowed to depend on $n$.  We will typically deal with the case when $n$ is large and fixed.  In this case, we drop the dependence on $n$ from our notation; we write $M_n = \{\zeta_{ij}\}_{i,j=1}^n$, and let $d_{11}, d_{22}, \ldots, d_{nn}$ denote the diagonal entries $D_n$.  

We remind the reader that $N_I$ is defined by \eqref{eq:def:NI}.  The results in this section are similar to \cite[Theorem 2.18]{LS}.  We begin with the following crude bound. 

\begin{lemma}[Upper bound on the ESD] \label{lemma:crude}
Let $M_n$ be a random Hermitian matrix that satisfies condition {\bf C0} whose entries are bounded in magnitude by $K$ almost surely for some $K \geq 1$.  Let $\{D_n\}_{n \geq 1}$ satisfy condition {\bf C1} with measure $\mu_D$.  Set $W_n := \frac{1}{\sqrt{n}} M_n + D_n$.  Then for any interval $I \subset \R$ with $|I| \geq \frac{K^2 \log^2 n}{n}$,
$$ N_I \ll n|I| $$
with overwhelming probability.  
\end{lemma}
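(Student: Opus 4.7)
The plan is to reduce the eigenvalue count bound to a bound on the Stieltjes transform of $W_n$ at a point just above the real axis, and then to control that transform via Schur's complement formula combined with the stability result from Section \ref{sec:stability}. Fix an interval $I = [E-\eta, E+\eta]$ with $2\eta = |I| \geq K^2 \log^2 n / n$ and set $z = E + \sqrt{-1}\eta$. The first step is the elementary inequality
$$ N_I \leq 2n\eta \cdot \Im m_n(z), $$
which follows because $\frac{\eta}{(\lambda-E)^2 + \eta^2} \geq \frac{1}{2\eta}$ for every $\lambda \in I$. Thus it suffices to establish $\Im m_n(z) = O(1)$ with overwhelming probability at the scale $\eta \gtrsim K^2 \log^2 n/n$.

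To bound $\Im m_n(z)$, I would invoke Schur's complement formula, which gives, for each $1 \leq k \leq n$,
$$ [(W_n - z)^{-1}]_{kk} = \frac{1}{d_{kk} + \zeta_{kk}/\sqrt{n} - z - Y_k}, $$
where $Y_k = \tfrac{1}{n} X_k^* (W_n^{(k)} - z)^{-1} X_k$, $W_n^{(k)}$ is the $(n-1)\times(n-1)$ principal minor of $W_n$ obtained by deleting row and column $k$, and $X_k$ is the $k$-th column of $M_n$ with the diagonal entry removed. Applying Lemma \ref{lemma:tail} (or an analogous Hanson--Wright type concentration bound) to the quadratic form $Y_k$, using the almost-sure bound $|\zeta_{ij}| \leq K$, one obtains with overwhelming probability
$$ Y_k = \tfrac{1}{n} \tr (W_n^{(k)} - z)^{-1} + O\!\left(\frac{K \log n}{\sqrt{n\eta}}\,\sqrt{\Im m_n^{(k)}(z)}\right), $$
where $m_n^{(k)}$ denotes the Stieltjes transform of $W_n^{(k)}$. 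Eigenvalue interlacing gives $\tfrac{1}{n}\tr(W_n^{(k)}-z)^{-1} = m_n(z) + O(1/(n\eta))$. Averaging over $k$ and using condition {\bf C1} to identify $\tfrac{1}{n}\sum_k \frac{1}{d_{kk} - z - s}$ with $\int d\mu_D(x)/(x-z-s) + O(1/n)$, one arrives at an approximate self-consistent equation
$$ m_n(z) = \int \frac{d\mu_D(x)}{x - z - m_n(z)} + o(1) $$
with overwhelming probability.

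At this point, I would apply the stability result (Theorem \ref{thm:stab}) to conclude that $m_n(z) = m(z) + o(1)$ with overwhelming probability. Since $\rho$ is continuous and compactly supported (by Biane's theorem, as recorded in \eqref{eq:def:rho}), $\Im m(z)$ is uniformly bounded for $z$ in any bounded subset of the upper half-plane, yielding $\Im m_n(z) = O(1)$ as desired. A union bound over an $\eps$-net (Lemma \ref{lemma:epsnet}) of admissible $z$, combined with the trivial Lipschitz bound $|\partial_z m_n| \leq 1/\eta^2$, upgrades this pointwise estimate to a uniform statement over all intervals $I$ of the required length and hence yields the lemma.

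The main obstacle is the circular nature of the concentration step: the error term above involves $\sqrt{\Im m_n^{(k)}(z)}$, which is essentially the quantity we wish to control. I would circumvent this by a standard bootstrapping (continuity) argument in $\eta$: start from $\eta = 1$, where $\Im m_n \leq 1$ is trivial, and decrease $\eta$ in multiplicative steps, using the bound $\Im m_n = O(1)$ at scale $2\eta$ to drive the concentration at scale $\eta$, iterating until the target scale $\eta \gtrsim K^2 \log^2 n/n$ is reached. The logarithmic factors in the hypothesis $|I| \geq K^2 \log^2 n/n$ arise precisely from the concentration error and from the union bound across the net and across the scales.
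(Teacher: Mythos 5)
Your reduction $N_I \leq 2n\eta\,\Im m_n(z)$ and the invocation of Schur's complement are both correct, but the overall strategy runs into two serious problems, and is in fact not the route the paper takes.

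First, there is a circularity. You propose to show that $m_n$ satisfies an approximate self-consistent equation with a small error and then apply Theorem \ref{thm:stab}. But establishing that approximate equation with small error for all relevant $z$ is precisely the content of Lemma \ref{lemma:stbound}, and that lemma's proof \emph{uses} the present Lemma \ref{lemma:crude} as an essential input: the dyadic decomposition by shells $B$ and $A_l$ in the proof of Lemma \ref{lemma:stbound} requires the a priori upper bound $N_J \ll n|J|$ for short intervals $J$ to control the number of eigenvalues in each shell. So the Stieltjes-transform concentration estimate is downstream of the crude ESD bound, not a tool for proving it. Your proposed bootstrap in $\eta$ partially addresses this, but notice that Theorem \ref{thm:stab} is a statement requiring uniform control of the self-consistent error over all $z$ with $\Im z > 1/N$ and $|z| \leq R$; a pointwise estimate at a single scale is not enough for it to apply, so you would essentially have to reproduce the whole Lemma \ref{lemma:stbound} machinery inside your bootstrap.

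Second, and more fundamentally, the concentration error you write for the quadratic form $Y_k$, of order $\frac{K\log n}{\sqrt{n\eta}}\sqrt{\Im m_n^{(k)}}$, is not small at the scale $\eta \sim K^2\log^2 n/n$ that the lemma targets. Even granting $\Im m_n^{(k)} = O(1)$, plugging in gives an error of order $1$, not $o(1)$; indeed the paper's Lemma \ref{lemma:stbound} can only push this down to $o(1)$ at the larger scale $\eta \geq K^2\log^{20}n/n$, and only after a more delicate dyadic decomposition than Hanson--Wright. An $O(1)$ error in the self-consistent equation does not give $m_n = m + o(1)$ via Theorem \ref{thm:stab}, so the conclusion $\Im m_n = O(1)$ does not follow from this route.

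The paper's actual argument is softer and one-sided, which is why it works down to the smaller scale. It argues by contradiction: assume $N_I \geq Cn\eta$; then Cauchy interlacing forces $W_{n,k}$ to have $\geq Cn\eta - 2$ eigenvalues in $I$, whence $\Im Y_k \geq \frac{4}{5\eta}\|P_{H_k} a_k\|^2$ for a subspace $H_k$ of dimension $\Theta(n\eta)$. The Projection Lemma (Lemma \ref{lemma:proj}) gives $\|P_{H_k} a_k\|^2 = \Omega(\eta)$ with overwhelming probability (this is where the threshold $\eta \geq K^2\log^2 n/n$ enters). Then
$$ \Im m_n(z) \leq \frac{1}{n}\sum_{k=1}^n \frac{1}{\eta + \Im Y_k} = O(1), $$
which contradicts $\Im m_n(z) \geq N_I/(2n\eta) \geq C/2$ once $C$ is large. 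No self-consistent equation, no stability, no bootstrap. You should replace your Stieltjes-transform-concentration scheme by this direct interlacing-plus-projection argument.
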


\begin{proof}
By applying the union bound, it is enough to consider the case when $|I| = \frac{K^2 \log^2 n}{n} = : \eta$.  Let $x$ be the center of $I$.  Set $z = x + \sqrt{-1} \eta$.  Define the event
$$ \Omega_n := \{ N_I \geq C n \eta \} \cap \{ \Im(m_n(x+\sqrt{-1}\eta)) \geq C \}. $$
Since $\{ \Im(m_n(x+\sqrt{-1}\eta)) \geq C \} \subset \{ N_I \geq C n \eta \}$, it suffices to show that the event $\Omega_n$ fails with overwhelming probability.  

On the event $\Omega_n$,
\begin{align*}
	C \leq \Im(m_n(x+\sqrt{-1}\eta)) &\leq \frac{1}{n} \sum_{k=1}^n \Im \left(\frac{1}{\frac{1}{\sqrt{n}}\zeta_{kk}+d_{kk} -z - Y_k} \right) \\
		& \leq \frac{1}{n} \sum_{k=1}^n \frac{1}{| \eta + \Im(Y_k)|}
\end{align*}
where $Y_k = a_k^\ast (W_{n,k} - zI)^{-1} a_k$, $W_{n,k}$ is the matrix $W_n$ with the $k$-th row and column removed, and $a_k$ is the $k$-th column of $W_n$ with the $k$-th entry removed.  We can then write
\begin{align*}
	\Im(Y_k) &= \Im \left( \sum_{j=1}^{n-1} \frac{| a_k \cdot u_j(W_{n,k})|^2}{\lambda_j(W_{n,k}) - z} \right) \\
		& = \eta \sum_{j=1}^{n-1} \frac{| a_k \cdot u_j(W_{n,k})|^2}{(\lambda_j(W_{n,k})-x)^2 + \eta^2 }
\end{align*}
where $u_j(W_{n,k})$ is the eigenvector of $W_{n,k}$ corresponding to eigenvalue $\lambda_j(W_{n,k})$.  

On $\Omega_n$, there exists $i_-$ and $i_+$ such that $i_+ - i_- \geq C n \eta$ and $\lambda_i(W_n) \in I$ for $i_- \leq i \leq i_+$.  By Cauchy's Interlacing Theorem, $\lambda_i(W_{n,k}) \in I$ for $i_- < i < i_+$.  So
\begin{align*}
	\Im(Y_k) \geq \eta \sum_{i_- < i < i_+} \frac{|a_k \cdot u_i(W_{n,k})|^2}{ \left( \frac{\eta}{2} \right)^2 + \eta^2 } \geq \frac{4}{5} \frac{1}{\eta} \sum_{i_- < i < i_+} |a_k \cdot u_i(W_{n,k})|^2 \geq \frac{4}{5} \frac{1}{\eta} \| P_{H_k}(a_k) \|^2
\end{align*}
where $P_{H_k}$ is the orthogonal projection on to the $(i_+-i_- -2)$-dimensional subspace $H_k$ spanned by $u_i(W_{n,k})$ for $i_- < i < i_+$.  

Combining the bounds above, we obtain
$$ C \leq \frac{1}{n} \sum_{k=1}^n \frac{1}{\eta + \frac{4}{5} \frac{1}{\eta} \| P_{H_k}(a_k) \|^2 } = \frac{1}{n} \sum_{k=1}^n \frac{\eta}{\eta^2 + \frac{4}{5} \|P_{H_k}(a_k) \|^2 }. $$
By Lemma \ref{lemma:proj} and the union bound over $O(n^2)$ choices of $i_+$ and $i_-$, we have that $\|P_{H_k}(a_k) \|^2 = \Omega(\eta)$ with overwhelming probability.  Therefore, by taking $C$ large enough, it follows that the event $\Omega_n$ fails with overwhelming probability.  
\end{proof}

\begin{lemma}[Control of Stieltjes transform implies control of ESD] \label{lemma:control}
Let $M_n$ satisfy condition {\bf C0}.  Let $\{D_n\}_{n \geq 1}$ satisfy condition {\bf C1} with measure $\mu_D$.  Assume that the limiting density $\rho$ is bounded.  Let $1/10 \geq \eta \geq 1/n$ and $L>1$.  Then for any $0 < \eps < 1/2$, there exists $\delta > 0$ such that the following holds.  If
\begin{equation} \label{eq:snzg}
	|m_n(z) - g(z+m_n(z))| \leq \delta 
\end{equation}
with (uniformly) overwhelming probability for all $z$ with $|z| \leq L$ and $\Im(z) \geq \eta$, then for any interval $I$ in $[-L+1/2,L-1/2]$ with $|I| \geq \max(2 \eta, \frac{\eta}{\eps} \log \frac{1}{\eps})$, one has
$$ \left| N_I - n \int_{I} \rho(x)dx \right| \ll \eps n |I| $$
with overwhelming probability.  
\end{lemma}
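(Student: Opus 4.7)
The plan is to combine the stability theorem from Section \ref{sec:stability} with a Poisson smoothing argument that transfers Stieltjes-transform control into control of the counting function. First I would apply Theorem \ref{thm:stab} with $\mu = \mu_D$, so that $g$ plays the role of $m_\mu$ and the limiting Stieltjes transform $m$ (characterized by $m(z) = g(z+m(z))$) plays the role of $s_0$. Given a target $\epsilon' > 0$, to be chosen later as a small multiple of $\epsilon$, Theorem \ref{thm:stab} supplies a $\delta > 0$ such that, on the overwhelmingly likely event provided by the hypothesis, one has the pointwise upgrade
\[
|m_n(z) - m(z)| \leq \epsilon'
\]
for every $z$ with $|z| \leq L$ and $\Im z \geq \eta$.

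Next I would fix a smoothing scale $\eta_0 := \epsilon|I|/\log(1/\epsilon)$; by the hypothesis $|I| \geq (\eta/\epsilon)\log(1/\epsilon)$, this satisfies $\eta_0 \geq \eta$, so the Stieltjes bound holds at height $\eta_0$. Introduce the Poisson-smoothed indicator
\[
\Phi(\lambda) := \int_a^b P_{\eta_0}(E-\lambda)\,dE, \qquad P_{\eta_0}(x) := \frac{1}{\pi}\frac{\eta_0}{x^2+\eta_0^2}.
\]
Fubini's theorem gives the identity $\int \Phi\,d\nu = \frac{1}{\pi}\int_a^b \Im m_\nu(E+i\eta_0)\,dE$ for any probability measure $\nu$. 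Applying this to $\mu_{W_n}$ and to the limit measure and subtracting,
\[
\left|\int\Phi\,d\mu_{W_n} - \int\Phi\,\rho\,dx\right| \leq \frac{\epsilon'}{\pi}|I|.
\]

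The remaining task is to control the boundary error in replacing $\Phi$ by $\mathbf{1}_I$ on both sides. Using the explicit bound $|\Phi(\lambda)-\mathbf{1}_I(\lambda)| \leq \min(1,\, C\eta_0/\mathrm{dist}(\lambda,\partial I))$, which follows from the arctangent formula for $\Phi$, boundedness of $\rho$ (from \eqref{eq:def:rho}) immediately gives $|\int\rho(\Phi-\mathbf{1}_I)\,dx| = O(\eta_0 \log(|I|/\eta_0))$. For the random side I would split into dyadic shells $\{2^k\eta_0 \leq \mathrm{dist}(\lambda,\partial I) \leq 2^{k+1}\eta_0\}$; Lemma \ref{lemma:crude} bounds the $\mu_{W_n}$-mass of each shell by $C \cdot 2^k\eta_0$ with overwhelming probability, while $|\Phi-\mathbf{1}_I| \lesssim 2^{-k}$ there, so each shell contributes $O(\eta_0)$. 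The far tails $\mathrm{dist}(\lambda,\partial I) \geq |I|$, where $\Phi$ decays like $\eta_0|I|/d^2$, contribute only $O(\eta_0)$ in total. The overall boundary error is therefore $O(\eta_0\log(|I|/\eta_0))$.

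Assembling the three estimates, $|N_I/n - \int_I\rho\,dx| \leq (\epsilon'/\pi)|I| + C\eta_0\log(|I|/\eta_0)$. With the choice $\eta_0 = \epsilon|I|/\log(1/\epsilon)$ one has $\log(|I|/\eta_0) = \log(\log(1/\epsilon)/\epsilon) \leq 2\log(1/\epsilon)$ for $\epsilon$ small, so the second term is bounded by $2C\epsilon|I|$; taking $\epsilon'$ as a sufficiently small multiple of $\epsilon$ completes the argument. The main obstacle is precisely this balancing act: the logarithmic loss $\log(|I|/\eta_0)$ in the Poisson boundary correction must be paid for by the $\log(1/\epsilon)$ saving in the hypothesis on $|I|$, which explains why the assumed lower bound on $|I|$ takes the specific form $(\eta/\epsilon)\log(1/\epsilon)$ rather than the naive $\eta/\epsilon$.
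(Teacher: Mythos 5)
Your proposal follows essentially the same route as the paper: invoke Theorem~\ref{thm:stab} to upgrade the hypothesis to $|m_n(z)-m(z)|\leq\eps'$ uniformly on $\{|z|\leq L,\ \Im(z)\geq\eta\}$, then transfer Stieltjes-transform control to the counting function by convolving with a Poisson kernel and estimating the boundary error. The paper smooths at the given scale $\eta$ (implemented as a Riemann sum over a grid inside $I$), whereas you smooth at the derived scale $\eta_0=\eps|I|/\log(1/\eps)\geq\eta$; this is a cosmetic variant that leads to the same logarithmic bookkeeping, and your final explanation of why the hypothesis is $|I|\geq(\eta/\eps)\log(1/\eps)$ rather than $\eta/\eps$ matches the paper's accounting.

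There is, however, a genuine gap in the tail estimate. You appeal to Lemma~\ref{lemma:crude} to bound the $\mu_{W_n}$-mass of the dyadic shells $\{2^k\eta_0\leq\dist(\lambda,\partial I)\leq 2^{k+1}\eta_0\}$, but that lemma requires the interval in question to have length at least $K^2\log^2 n/n$. The present lemma allows $\eta\geq 1/n$, so when $|I|$ is near its permitted minimum you have $\eta_0\asymp\eta$, and the innermost shells can have width of order $1/n\ll K^2\log^2 n/n$, outside the reach of Lemma~\ref{lemma:crude}. The paper sidesteps this dependence entirely: once Theorem~\ref{thm:stab} gives $|m_n(z)|\ll 1$ on the relevant region, one reads off $\Im m_n(x+\sqrt{-1}\eta)\ll 1$ and hence $N_J\ll n|J|$ for every subinterval $J\subset[-L+1/2,L-1/2]$ with $|J|\geq\eta$ --- exactly the crude bound needed, at exactly the right scale. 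You should replace the appeal to Lemma~\ref{lemma:crude} with this observation (applied at scale $\eta_0$ or $\eta$). Two minor points: boundedness of $\rho$ is a stated hypothesis of this lemma, not a consequence of the inversion formula \eqref{eq:def:rho}; and since $\eta_0$ grows with $|I|$, one should also cap $\eta_0$ (say by $1/10$) to guarantee that the evaluation points $E+\sqrt{-1}\eta_0$ with $E\in I$ stay inside $\{|z|\leq L\}$ --- the paper's choice of smoothing at scale $\eta\leq 1/10$ makes this automatic.
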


\begin{proof}
Let $\eps>0$.  By Theorem \ref{thm:stab} in Section \ref{sec:stability}, there exists $\delta > 0$ such that \eqref{eq:snzg} implies
\begin{equation} \label{eq:snszeps}
	|m_n(z) - m(z)| \leq \eps
\end{equation}
with (uniformly) overwhelming probability for all $z$ with $|z| \leq L$ and $\Im(z) \geq \eta$.  It follows from Lemma \ref{stab} in Section \ref{sec:stability} that $|m_n(x+\sqrt{-1}\eta)| \ll 1$ with overwhelming probability for $-L+1/2 \leq x \leq L-1/2$.  Thus, we have
$$ \Im(m_n(x + \sqrt{-1} \eta)) = \frac{1}{n} \sum_{i=1}^n \frac{\eta}{\eta^2 + (\lambda_i(W_n) -x)^2 } \ll 1 $$
with overwhelming probability for all $-L+1/2 \leq x \leq L-1/2$.  Thus, for $I \subset [-L+1/2,L-1/2]$ with $|I| = \eta$, we have that $N_I \ll n \eta$ with overwhelming probability.  By summing over $I$, we obtain
\begin{equation} \label{eq:nni}
	N_I \ll n |I|
\end{equation}
with overwhelming probability for $I \subset [-L+1/2,L-1/2]$ with length $|I| \geq \eta$.  

Let $I \subset [-L+1/2, L-1/2]$ with $|I| \geq \max(2 \eta, \frac{\eta}{\eps} \log \frac{1}{\eps})$.  Define the function
$$ F(y) := n^{-100} \sum_{x \in I; n^{-100}|x} \frac{\eta}{\pi(\eta^2 + |y-x|^2)} $$
where the sum is over all $x \in I$ that are multiples of $n^{-100}$.  We then have that
$$ \frac{1}{n} \sum_{j=1}^n F(\lambda_j(W_n)) = n^{-100}\frac{1}{\pi} \Im \sum_{x \in I; n^{-100}|x} m_n(x+\sqrt{-1}\eta) $$
and
$$ \int_\R F(y) \rho(y) dy = n^{-100}\frac{1}{\pi} \Im \sum_{x \in I; n^{-100}|x} m(x+\sqrt{-1}\eta). $$

By \eqref{eq:snszeps}, $m_n(x+\sqrt{-1}\eta) = m(x+\sqrt{-1}\eta) + O(\eps)$ with overwhelming probability for all $x \in I$ and hence
$$ \frac{1}{n} \sum_{j=1}^n F(\lambda_j(W_n)) = \int_\R F(y) \rho(y) dy + O(\eps|I|)  $$
with overwhelming probability.  

By Riemann integration,
\begin{equation} \label{eq:inteqF}
	F(y) = \int_I \frac{\eta}{\pi (\eta^2 + (y-x)^2)}dx + O(n^{-10}). 
\end{equation}
From \eqref{eq:inteqF}, we obtain the following pointwise bounds for $y \notin I$:
$$ F(y) \ll \frac{1}{1 + \frac{\dist(y,I)}{\eta}} + n^{-10} $$
when $\dist(y,I) \leq |I|$, and
$$ F(y)  \ll \frac{\eta |I|}{\dist(y,I)^2} + n^{-10} $$
when $\dist(y,I) > |I|$.  

Since
$$ \int_\R \frac{\eta}{\pi (\eta^2 + (y-x)^2)}dx = 1, $$
it follows that
$$ F(y) = 1 + O\left( \frac{1}{1 + \frac{\dist(y,I^c)}{\eta} } \right) + O(n^{-10}) $$
when $y \in I$.  

Thus, we obtain
$$ \int_\R F(y) \rho(y) dy = \int_I \rho(y) dy + O \left( \eta \log \left( \frac{|I|}{\eta} \right) \right).  $$
Similarly, using Riemann integration, \eqref{eq:nni}, and the trivial bound $N_J \leq n$ for $J$ outside $[-L+1/2,L-1/2]$, we obtain
$$ \frac{1}{n} \sum_{i=1}^n F(\lambda_i(W_n)) = \frac{1}{n} N_I + O \left( \eta \log \left( \frac{|I|}{\eta} \right) \right) $$
with overwhelming probability.  Therefore, we conclude that
$$ N_I = n \int_I \rho(y) dy + O (\eps n |I|) $$
since $|I| \geq \frac{\eta}{\eps} \log \frac{1}{\eps} $.  
\end{proof}

\begin{lemma}[Concentration of Stieltjes Transform up to the edge] \label{lemma:stbound}
Let $M_n$ satisfy condition {\bf C0}.  Suppose the entries of $M_n$ are bounded by $K$ for some $K \geq 1$ where $\frac{K}{\sqrt{n}} \ll \frac{1}{\log^3 n}$.  Let $\{D_n\}_{n \geq 1}$ satisfy condition {\bf C1} with measure $\mu_D$.  Then
$$ |m_n(z) - g(z+m_n(z))| \ll_L \frac{1}{\log n} $$
with (uniformly) overwhelming probability for all $z$ with $|z| \leq L$ and 
$$ \Im(z) \geq \frac{K^2 \log^{20} n}{n}. $$
\end{lemma}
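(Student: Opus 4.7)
The plan is to use the Schur-complement reduction standard in random matrix theory. For each $k$, Schur's formula expresses
\[
(W_n - zI)^{-1}_{kk} = \frac{1}{d_{kk} + n^{-1/2}\zeta_{kk} - z - Y_k}, \qquad Y_k := a_k^{*}(W_{n,k} - zI)^{-1} a_k,
\]
where $W_{n,k}$ is the principal minor deleting row and column $k$ and $a_k$ is the $k$-th column of $W_n$ without its $k$-th entry. Defining $\varepsilon_k$ by $(W_n - zI)^{-1}_{kk} = [d_{kk} - z - m_n(z) - \varepsilon_k]^{-1}$ and averaging over $k$ yields the identity
\[
m_n(z) - g_n(z + m_n(z)) \;=\; \frac{1}{n}\sum_{k=1}^n \frac{\varepsilon_k}{(d_{kk}-z-m_n(z))\,(d_{kk}-z-m_n(z)-\varepsilon_k)},
\]
so the task reduces to controlling each $\varepsilon_k$ pointwise and then handling the averaged denominators.

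For the pointwise control, a Hanson--Wright-type concentration bound (derivable from Lemma \ref{lemma:tail} after diagonalizing $W_{n,k}$) controls $|Y_k - n^{-1}\tr(W_{n,k} - zI)^{-1}|$; the Hilbert--Schmidt scale $\|(W_{n,k} - zI)^{-1}\|_F^{2} = \Im\tr(W_{n,k} - zI)^{-1}/\Im z$ is tamed by an a priori bound $\Im m_n(z) = O(1)$, which I extract from Lemma \ref{lemma:crude} by a dyadic decomposition of the spectrum of $W_n$ around $\Re z$. Since $\Im z \geq K^{2}\log^{20}n/n$, this concentration gives $|Y_k - n^{-1}\tr(W_{n,k} - zI)^{-1}| \ll 1/\log n$ with overwhelming probability. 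Cauchy's interlacing theorem gives $|n^{-1}\tr(W_{n,k} - zI)^{-1} - m_n(z)| \leq 1/(n\,\Im z) \ll 1/\log n$, and the hypothesis $K/\sqrt n \ll 1/\log^{3}n$ handles $|\zeta_{kk}|/\sqrt n$. Combining these gives $\max_k|\varepsilon_k| \ll 1/\log n$ with overwhelming probability. To pass to the average, I apply Cauchy--Schwarz together with the Ward identities
\[
\frac{1}{n}\sum_k \bigl|(W_n - zI)^{-1}_{kk}\bigr|^{2} \leq \frac{\Im m_n(z)}{\Im z}, \qquad \frac{1}{n}\sum_k \frac{1}{|d_{kk}-z-m_n(z)|^{2}} = \frac{\Im g_n(z+m_n(z))}{\Im(z+m_n(z))},
\]
and exploit the finite-atom structure of $\mu_{D_n}$ (condition \textbf{C1}: at most $l$ distinct values of $d_{kk}$) to obtain the estimate $|m_n(z) - g_n(z + m_n(z))| \ll 1/\log n$. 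Condition \textbf{C1} also provides $|g_n(w) - g(w)| = O(1/(n\,\Im w)) \ll 1/\log n$ on the relevant region. A polynomial-size $\varepsilon$-net (Lemma \ref{lemma:epsnet}), combined with the local Lipschitz constants of $m_n$ and $g\circ(z+m_n(\cdot))$ on $\{\Im z \geq \eta\}$, then upgrades pointwise overwhelming probability to a uniform overwhelming-probability bound over the required range of $z$.

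The main obstacle is the averaging step. The denominators $|d_{kk} - z - m_n(z)|$ can be as small as $\Im z = \eta$ whenever $\Re(z + m_n(z))$ lands near an atom $a_i$, and a direct Cauchy--Schwarz estimate produces only $\max_k|\varepsilon_k|/\eta^{3/2}$, far too weak for the statement. Circumventing this requires splitting the sum over $k$ according to the value $d_{kk} = a_i$, using that at most a proportion $p_i^{(n)} \leq 1$ of the indices suffers from any one resonance, and trading factors of $1/\Im z$ coming from bad denominators against the a priori bound $\Im m_n(z) = O(1)$ via the Ward identities above.
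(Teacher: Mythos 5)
Your proposal matches the paper's architecture up to the last step: Schur complement, pointwise concentration of $Y_k$ around $n^{-1}\operatorname{tr}(W_{n,k}-zI)^{-1}$, Cauchy interlacing to compare with $m_n$, the a priori bound $m_n(z)=O(1)$, the $\eps$-net to upgrade to uniformity, and replacing $g_n$ by $g$ via condition \textbf{C1} are all present in the paper's argument. But the step you flag as ``the main obstacle'' is a genuine gap, and the circumvention you sketch does not close it.

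The Cauchy--Schwarz/Ward-identity route yields, in the best case,
\[
|m_n(z)-g_n(z+m_n(z))| \;\lesssim\; \max_k|\varepsilon_k|\cdot\left(\frac{\Im g_n(z+m_n(z))}{\Im(z+m_n(z))}\right)^{1/2}\left(\frac{\Im m_n(z)}{\Im z}\right)^{1/2},
\]
and even granting $\Im m_n(z)=O(1)$ and a self-consistent surrogate $\Im g_n(z+m_n(z))\approx\Im m_n(z)$, the product of the two square roots is of order $(\Im z)^{-1/2}$, which at the threshold $\Im z\sim K^2\log^{20}n/n$ costs a factor $\sim\sqrt n/\text{polylog}(n)$ --- far more than the factor $\max_k|\varepsilon_k|\sim\log^{-3}n$ can absorb. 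Splitting by the atom label $a_i$ and trading $1/\Im z$ against the a priori bound, as you suggest, does not repair this: the Ward identity only controls an average, and a single resonant term $1/|(d_{kk}-z-m_n(z))(d_{kk}-z-m_n(z)-\varepsilon_k)|$ can be as large as $1/(\Im z)^2$.

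The missing idea is that, under condition \textbf{C1}, the denominators $d_{kk}-z-m_n(z)$ are in fact \emph{uniformly bounded away from zero} over $1\le k\le n$ and over the whole region $|z|\le L$, $\Im z\ge K^2\log^{20}n/n$ with overwhelming probability; no averaging subtlety then arises, and the $O(\log^{-3}n)$ perturbation of each denominator passes directly through the sum. The mechanism is structural: $\mu_D$ has finitely many atoms $a_1,\ldots,a_l$ with $|\{k:d_{kk}=a_i\}|=\Theta(n)$. If some $a_i-z-m_n(z)$ were small, then $\Theta(n)$ of the diagonal resolvent entries $(W_n-zI)^{-1}_{kk}=(d_{kk}-z-m_n(z)+O(\log^{-3}n))^{-1}$ would simultaneously be large, and the averaged Schur identity $m_n(z)=\frac{1}{n}\sum_k(W_n-zI)^{-1}_{kk}$ would then force $m_n(z)$ to violate the a priori bound $\sup_{z\in Q}|m_n(z)|\ll 1$. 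This is precisely the point where condition \textbf{C1} (atomic $\mu_D$ with atoms of size $\Theta(n)$) enters in an essential way, and it is absent from your argument; a measure $\mu_D$ with a continuous component would not admit this reasoning, which is exactly why the paper restricts to the atomic case. You should replace the Cauchy--Schwarz/Ward step by this ``no resonance'' argument; once the denominators are bounded below the conclusion follows from $\max_k|\varepsilon_k|\ll\log^{-3}n$ by the triangle inequality alone.
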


\begin{proof}
By Schur's complement, for any $z \in \C$ with $\Im(z) > 0$,
\begin{equation} \label{eq:snzeta}
	m_n(z) = \frac{1}{n} \sum_{k=1}^n \frac{1}{\frac{1}{\sqrt{n}} \zeta_{kk} + d_{kk} - z - Y_k} 
\end{equation}
where 
$$ Y_k = a_k^\ast (W_{n,k} - zI)^{-1} a_k, $$
$W_{n,k}$ is the matrix $W_n$ with the $k$-th row and column removed, and $a_k$ is the $k$-th column of $W_{n}$ with the $k$-th entry removed.  Write $z = x + \sqrt{-1}\eta$ where
$$ \eta n \geq K^2 \log^{20} n. $$
Then
$$ \E[Y_k|W_{n,k}] = \frac{1}{n} \tr (W_{n,k} - zI)^{-1} = (1-\frac{1}{n}) m_{n,k}(z), $$
where 
$$ m_{n,k}(z) := \frac{1}{n-1} \sum_{i=1}^{n-1} \frac{1}{\lambda_i(W_{n,k}) - z}. $$
By Cauchy's interlacing law,
$$ m_n(z) - (1-\frac{1}{n})m_{n,k}(z) \ll \frac{1}{n \eta} \ll \frac{1}{\log^{20} n}. $$

We will now show that with overwhelming probability
$$ Y_k - E[Y_k|W_{n,k}] \ll \frac{1}{\log^3 n}. $$
In fact, by rewriting $Y_k$, it suffices to show that
\begin{equation} \label{eq:rjshow}
\sum_{j=1}^{n-1} \frac{R_j}{\lambda_j(W_{n,k}) - x - \sqrt{-1} \eta} \ll \frac{1}{\log^3 n}
\end{equation}
with overwhelming probability, where
$$ R_j := |u_j(W_{n,k})^\ast a_k|^2 - \frac{1}{n}. $$
Let $1 \leq i_- < i_+ \leq n$, then
$$ \sum_{i_- \leq j \leq i_+} R_j = \|P_H a_k \|^2 - \frac{\dim(H)}{n} $$
where $H$ is the space spanned by the $u_j(W_{n,k})$ for $i_- \leq j \leq i_+$.  From Lemma \ref{lemma:proj} and the union bound, we have that
\begin{equation} \label{eq:rj1}
	\left| \sum_{i_- \leq j \leq i_+} R_j \right| \ll \frac{ \sqrt{i_+ - i_i} K \log n + K^2 \log^2 n}{n} 
\end{equation}
with overwhelming probability.  It follows that
$$ \|P_H a_k \|^2 \ll \frac{i_+ - i_-}{n} + \frac{ \sqrt{i_+ - i_i} K \log n + K^2 \log^2 n}{n} $$
and hence by the triangle inequality
\begin{equation} \label{eq:rj2}
	\sum_{i_- \leq j \leq i_+} |R_j| \ll \frac{i_+ - i_-}{n} + \frac{K^2 \log^2 n}{n} 
\end{equation}
with overwhelming probability.  

We now study the sum on the left-hand side of \eqref{eq:rjshow}.  We classify the indices $j$ according to the distance $|\lambda_j(W_{n,k}) - x|$.  To begin, we define
\begin{align*}
	\delta' := \frac{1}{\log^{3.01} n}, \quad \alpha := \frac{1}{\log^{4.01} n}.
\end{align*}
Let
$$ B = \{ j : |\lambda_j(W_{n,k}) - x| \leq \delta' \eta \}. $$
By Lemma \ref{lemma:crude}, $|B| \ll \delta' \eta n$ with overwhelming probability.  For each $j \in B$, 
$$ \frac{1}{\lambda_j(W_{n,k}) - x - \sqrt{-1} \eta} = O \left( \frac{1}{\eta} \right). $$
Thus, by \eqref{eq:rj2}, we obtain 
$$ \sum_{j \in B} \frac{R_j}{\lambda_j(W_{n,k}) - x - \sqrt{-1} \eta} \ll \frac{1}{\eta} \frac{\delta' \eta n}{n} \ll \frac{1}{\log^3 n} $$
with overwhelming probability.  

Next we consider the following set of indices:
$$ A_l = \{ j : (1+\alpha)^l \delta ' \eta < | \lambda_j(W_{n,k}) - x| \leq (1+\alpha)^{l+1} \delta' \eta \} $$
for some integers $0 \leq l \ll \log n/\alpha$.  By Lemma \ref{lemma:crude}, $|A_l| \ll (1+\alpha)^l \alpha \delta' \eta n$ with overwhelming probability.  On $A_l$, the quantity
$$ \frac{1}{\lambda_j(W_{n,k}) - x - \sqrt{-1} \eta} $$
has magnitude $O \left( \frac{1}{(1+\alpha)^l \delta' \eta} \right)$ and fluctuates by $O \left( \frac{\alpha}{(1+\alpha)^l \delta' \eta} \right)$.  So by \eqref{eq:rj1} and \eqref{eq:rj2}, we obtain, with overwhelming probability,
$$ \sum_{j \in A_l} \frac{R_j}{\lambda_j(W_{n,k}) - x - \sqrt{-1} \eta} \ll \alpha(1+\alpha)^{-l/2} \frac{K \log n}{\sqrt{\alpha \delta' \eta n}} + \alpha^2. $$
Summing over $l$, we obtain
$$ \sum_{l} \sum_{j \in A_l} \frac{R_j}{\lambda_j(W_{n,k}) - x - \sqrt{-1} \eta} \ll \frac{K \log n}{\sqrt{\alpha \delta' \eta n}} + \alpha \log n $$
with overwhelming probability.  

Combining the bounds above, we obtain \eqref{eq:rjshow}.  By assumption, we have that
$$ \frac{|\zeta_{kk}|}{\sqrt{n}} \leq \frac{K}{\sqrt{n}} \ll \frac{1}{\log^3 n}. $$
Therefore, by the union bound and \eqref{eq:snzeta}, we obtain
\begin{equation} \label{eq:ndkksum}
	m_n(z) = \frac{1}{n} \sum_{k=1}^n \frac{1}{d_{kk} - z - m_n(z) + O\left( \frac{1}{\log^3 n} \right)}
\end{equation}
with overwhelming probability.  We remark that the implicit constant in the asymptotic notation of equation \eqref{eq:ndkksum} is independent of $k$.  

Let $Q$ be the region in the complex plane where
$$ |z| \leq L, \quad \Im(z) \geq \frac{K^2 \log^{20} n}{n}. $$
We apply an $\eps$-net argument to extend \eqref{eq:ndkksum} to the entire region $Q$.  Indeed, by the resolvent identity, for $z,z' \in Q$, we have
$$ |m_n(z) - m_n(z')| \leq \frac{|z-z'|}{\Im(z) \Im(z')} \leq n^{2} |z-z'|. $$
Thus, by a standard $\eps$-net argument with (say) $\eps:=n^{-100}$, Lemma \ref{lemma:epsnet}, and the union bound, we conclude that 
\begin{equation} \label{eq:ndkksum2}
	m_n(z) = \frac{1}{n} \sum_{k=1}^n \frac{1}{d_{kk} - z - m_n(z) + O\left( \frac{1}{\log^3 n} \right)} + O \left( \frac{1}{\log^3 n} \right)
\end{equation}
with (uniformly) overwhelming probability for all $z \in Q$.  

From \eqref{eq:ndkksum2}, we see that $\sup_{z \in Q} |m_n(z)| \ll 1$ with overwhelming probability.  We now take advantage of the particular form $\mu_D$ takes under the assumptions of condition {\bf C1}.   In particular, let $l \geq 1, 0 < p_1, \ldots, p_l \leq 1, a_1, \ldots, a_l \in \mathbb{R}$ be the constants from Definition \ref{def:C1}.  Then for any $1 \leq k \leq n$, there exists $1 \leq i \leq l$ such that $d_{kk} = a_i$.  

Suppose $a_i - z - m_n(z) = o(1)$ (or some subsequence $a_i - z - m_{n_r}(z) = o(1)$) for some $1 \leq i \leq l$ and some $z \in Q$.  Then it must be the case that $\Im(z) = o(1)$ since the imaginary parts of $m_n(z)$ and $z$ are both positive in the region $Q$.  We now consider the set of indices 
$$ C := \{1 \leq k \leq n : d_{kk} = a_i \}. $$
From condition {\bf C1}, it follows that 
$$ \frac{1}{n} \sum_{k \notin C} \frac{1}{d_{kk} - z - m_n(z) + O\left( \frac{1}{\log^3 n} \right)} = O(1) $$
and $|C| = \Theta(n)$.  This implies that $m_n(z)$ is unbounded, a contradiction.  Thus, by another $\eps$-net argument, we conclude that $m_n(z) - z - a_i$ is bounded away from zero for all $1 \leq i \leq l$ and $z \in Q$ with overwhelming probability.  

Therefore, from \eqref{eq:ndkksum2}, we have 
\begin{equation} \label{eq:condest1}
	m_n(z) = g_n(z + m_n(z)) + \left( \frac{1}{\log^3 n} \right)
\end{equation}
with (uniformly) overwhelming probability for all $z \in Q$.  By the assumptions of condition {\bf C1}, we have that
\begin{equation} \label{eq:condest2}
	g_n(z) = g(z) + O \left( \frac{1}{n \Im(z)} \right) 
\end{equation}
uniformly for any $z \in \C^+$.  Thus, we conclude that
$$ m_n(z) = g(z + m_n(z)) + \left( \frac{1}{\log^3 n} \right) $$
with (uniformly) overwhelming probability for all $z \in Q$.  The proof of Lemma \ref{lemma:stbound} is complete. 
\end{proof}

\begin{remark} \label{rem:C1}
In the proof of Lemma \ref{lemma:stbound}, we use the fact that $\{D_n\}_{n \geq 1}$ satisfies condition {\bf C1} in order to establish \eqref{eq:condest1} and \eqref{eq:condest2}.  In particular, \eqref{eq:condest2} allows us to replace the function $g_n$ by $g$.  This is an important step because we must replace $g_n$ by $g$ in order to apply the stability results from Section \ref{sec:stability}.  
\end{remark}

Since $\|W_n\| \leq \frac{1}{\sqrt{n}} \|M_n\| + \|D_n\| = O(1)$ with overwhelming probability\footnote{In particular, a bound for $\|M_n\|$ can be obtained by using that $\|M_n\| \leq 2 \sup_x\|M_n x\|$ where $x$ ranges over a $1/2$-net of the unit ball.  Thus, by the union bound and a concentration of measure result one can obtain that $\frac{1}{\sqrt{n}} \|M_n\| = O(1)$ with overwhelming probability.} (see for example \cite[Chapter 5]{BSbook}), we obtain the following theorem as a consequence of Lemma \ref{lemma:control} and Lemma \ref{lemma:stbound}.

\begin{theorem}[Concentration of the ESD up to the edge] \label{thm:concentration}
Let $M_n$ satisfy condition {\bf C0}.  Suppose the entries of $M_n$ are bounded by $K$ for some $K \geq 1$ where $\frac{K}{\sqrt{n}} \ll \frac{1}{\log^3 n}.$  Let $\{D_n\}_{n \geq 1}$ satisfy condition {\bf C1} with measure $\mu_D$.  Assume the limiting density $\rho$ is bounded.  Let $I$ be an interval such that
$$ |I| \geq \frac{K^2 \log^{22} n}{n}. $$
Then 
$$ N_{I} = n\int_{I} \rho(x) dx + o(n|I|) $$
with overwhelming probability.  
\end{theorem}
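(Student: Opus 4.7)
The plan is to combine Lemma \ref{lemma:stbound} with Lemma \ref{lemma:control} in a straightforward way, using the spectral norm bound to restrict attention to a bounded portion of the real line. The key observation is that Lemma \ref{lemma:stbound} produces a Stieltjes transform error of size $O(1/\log n) \to 0$, which is small enough to activate the hypothesis of Lemma \ref{lemma:control} for any fixed $\eps > 0$, while the interval size hypothesis $|I| \geq K^2 \log^{22} n/n$ has been chosen with an extra factor of $\log^{2} n$ over $\eta := K^2 \log^{20} n/n$ so that the condition $|I| \geq (\eta/\eps)\log(1/\eps)$ in Lemma \ref{lemma:control} holds for every fixed $\eps$ once $n$ is large.

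First, since $\|W_n\| = O(1)$ with overwhelming probability and $\rho$ has compact support (as $\mu_D$ is compactly supported), I fix $L$ sufficiently large so that $\mathrm{supp}(\rho)\subset [-L+1/2,L-1/2]$ and all eigenvalues of $W_n$ lie in this interval with overwhelming probability. For any $I$ with $|I|\ge K^2\log^{22}n/n$, one may reduce to the case $I\subset [-L+1/2,L-1/2]$: if $I$ is disjoint from this interval, then $N_I=0$ and $\int_I \rho = 0$ trivially, and in the general case one splits $I$ into its intersection with $[-L+1/2,L-1/2]$ and the complementary piece, on which both $N_I$ and $\int_I\rho$ vanish.

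Next, I apply Lemma \ref{lemma:stbound} with this $L$ and with $\eta := K^2 \log^{20} n / n$ to obtain
\[
|m_n(z) - g(z+m_n(z))| \ll \frac{1}{\log n}
\]
with (uniformly) overwhelming probability for all $z$ with $|z|\le L$ and $\Im z \ge \eta$. Fix any $\eps_0 > 0$ and let $\delta = \delta(\eps_0) > 0$ be the constant produced by Lemma \ref{lemma:control}; for $n$ large enough depending on $\eps_0$, we have $1/\log n < \delta$, so the hypothesis of Lemma \ref{lemma:control} is met. Moreover, for $n$ sufficiently large (depending on $\eps_0$),
\[
|I| \;\geq\; \frac{K^2\log^{22} n}{n} \;\geq\; \max\!\left(2\eta,\ \frac{\eta}{\eps_0}\log\frac{1}{\eps_0}\right),
\]
so Lemma \ref{lemma:control} applies and yields $|N_I - n\int_I \rho(x)\,dx| \ll \eps_0\, n|I|$ with overwhelming probability, uniformly in such $I\subset [-L+1/2,L-1/2]$.

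Since $\eps_0 > 0$ was arbitrary, this gives $N_I = n\int_I \rho(x)\,dx + o(n|I|)$ with overwhelming probability, completing the proof. There is no real obstacle here beyond carefully tracking quantifiers: one must first fix $\eps_0$, then let $n$ be large enough for both the Stieltjes transform bound to fall below $\delta(\eps_0)$ and the interval size condition to hold, and only afterward let $\eps_0 \to 0$. The extra $\log^{2} n$ factor in the hypothesis $|I|\ge K^2\log^{22}n/n$ (as opposed to $\log^{20}$) is precisely what allows an $\eps_0$ shrinking with $n$—e.g.\ $\eps_0 = 1/\log^{1/2} n$—to be accommodated, which is what turns the ``for each fixed $\eps_0$'' conclusion of Lemma \ref{lemma:control} into the $o(n|I|)$ statement.
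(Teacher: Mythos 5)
Your argument is correct and is exactly the route the paper takes: the paper's entire proof of Theorem \ref{thm:concentration} is the one-sentence remark preceding it, stating that it follows from Lemma \ref{lemma:control} and Lemma \ref{lemma:stbound} together with the overwhelming-probability operator norm bound $\|W_n\|=O(1)$, and your write-up supplies precisely the missing quantifier bookkeeping (pick $L$ from the norm bound, set $\eta=K^2\log^{20}n/n$, use Lemma \ref{lemma:stbound} to verify the $\delta$-hypothesis of Lemma \ref{lemma:control} for any fixed $\eps_0$, then diagonalize). One small caveat: the closing remark that the $\log^2 n$ slack ``precisely'' accommodates a concrete rate such as $\eps_0=1/\log^{1/2}n$ overstates what is justified, since the threshold $\delta(\eps_0)$ produced by Theorem \ref{thm:stab} via Lemma \ref{lemma:control} is not explicit and so one cannot name a specific shrinking rate for $\eps_0$; but this is commentary only and the diagonal argument you actually run (fix $\eps_0$, take $n$ large, let $\eps_0\to 0$ afterward) is exactly what the $o(n|I|)$ conclusion requires.
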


With Theorem \ref{thm:concentration} in hand, we now complete the proof of Theorem \ref{thm:corr} assuming Theorem \ref{thm:4moment}.  The proof is based on the arguments from \cite{TVuniv}.  

\begin{proof}[Proof of Theorem \ref{thm:corr}]
Fix $k\geq 1$, and let $x_0, W_n, W_n'$ be as in Theorem \ref{thm:corr}.  It suffices to show that 
\begin{equation} \label{eq:rhokf}
	\int_\mathbb{R} f(u_1, \ldots, u_k) \left( \frac{1}{n \rho(x_0) } \right)^k \rho_n^{(k)} \left(x_0 + \frac{u_1}{n \rho(x_0)}, \ldots, x_0 + \frac{u_k}{n \rho(x_0)} \right) du_1 \ldots du_k
\end{equation}
only changes by $o(1)$ when $\rho_n^{(k)}$ is replaced by ${\rho'}_n^{(k)}$, for any fixed test function $f$.  By an approximation argument, we can take $f$ to be smooth.  

Using \eqref{cord}, we rewrite \eqref{eq:rhokf} as
\begin{equation} \label{eq:rhokfe}
	\E \sum_{\substack{i_1, \ldots, i_k\\ \text{distinct}}} f \left( (\lambda_{i_1}(A_n) - x_0 n) \rho(x_0), \ldots, (\lambda_{i_1}(A_n) - x_0 n) \rho(x_0) \right), 
\end{equation}
where $A_n = n W_n$.  Set $A_n' = n W_n'$; by Theorem \ref{thm:4moment}, we have
\begin{align*}
	\E f &\left( (\lambda_{i_1}(A_n) - x_0 n) \rho(x_0), \ldots, (\lambda_{i_1}(A_n) - x_0 n) \rho(x_0) \right) \\
	&= \E f \left( (\lambda_{i_1}(A'_n) - x_0 n) \rho(x_0), \ldots, (\lambda_{i_1}(A'_n) - x_0 n) \rho(x_0) \right) + O(n^{-c_0})
\end{align*}
for each $i_1, \ldots, i_k$ in the $(\eps,n)$-bulk of $\rho$.  By Theorem \ref{thm:concentration}, only $O(n^c)$ eigenvalues contribute to \eqref{eq:rhokfe}, where $c>0$ can be made arbitrarily small.  Thus, the claim follows from the triangle inequality (choosing $c$ small enough compared to $c_0$).  
\end{proof}

It remains to prove Theorem \ref{thm:4moment} and Theorem \ref{thm:gap}.

\section{Delocalization of the Eigenvectors} \label{sec:eigenvectors}

We now use Theorem \ref{thm:concentration} to study the eigenvectors of $W_n$.  In particular, this section is devoted to the following theorem.  

\begin{theorem}[Delocalization of the eigenvectors] \label{thm:eigenvectors}
Let $M_n$ be a $n \times n$ matrix that satisfies condition {\bf C0}.  Suppose the entries of $M_n$ are bounded by $K \geq 1$, where $K \ll \log^{O(1)} n$.  Let $\{D_n\}_{n \geq 1}$ satisfy condition {\bf C1} with measure $\mu_D$.  Assume the limiting density $\rho$ satisfies condition {\bf (A)}.  Let $\eps > 0$ be a small parameter.  If $i$ is in the $(\eps,n)$-bulk of $\rho$, then
$$ \sup_{1 \leq j \leq n} |u_i(W_n)^\ast e_j| \ll_\eps \frac{\log^{O(1)} n}{\sqrt{n}} $$
with overwhelming probability, where $e_1, \ldots, e_n$ is the standard basis.  
\end{theorem}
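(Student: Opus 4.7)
The strategy is the familiar Schur--complement argument of Tao--Vu, adapted to the external-source setting by invoking the local law of Theorem \ref{thm:concentration}. Fix a bulk index $i$ and a column index $j \in \{1,\ldots,n\}$. Let $W_{n,j}$ denote the $(n-1) \times (n-1)$ minor of $W_n$ obtained by deleting the $j$-th row and column, and let $a_j \in \C^{n-1}$ denote the $j$-th column of $W_n$ with the $j$-th entry removed. Since $D_n$ is diagonal, the entries of $\sqrt{n}\, a_j$ are exactly the off-diagonal entries of $M_n$ in column $j$, so they are independent of $W_{n,j}$, mean zero, unit variance, and bounded in magnitude by $K$. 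A standard block computation yields
$$ |u_i(W_n)^\ast e_j|^2 = \frac{1}{1 + \sum_{k=1}^{n-1} \frac{|a_j^\ast u_k(W_{n,j})|^2}{(\lambda_k(W_{n,j}) - \lambda_i(W_n))^2}}.$$
Thus it is enough to show that with overwhelming probability the denominator is $\Omega(n/\log^{O(1)} n)$; a union bound over $j$ then gives the theorem.

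To produce the lower bound, set $\eta := K^2 \log^{22} n / n$. Since $i$ lies in the $(\eps,n)$-bulk of $\rho$, the quantile $x_i$ corresponding to $i/n$ lies in a closed subinterval $K_\eps \subset \bigcup_j (\alpha_j,\beta_j)$ on which, by continuity and \eqref{eq:condAnon0}, $\rho$ is bounded below by some $c_\eps > 0$. Applying Theorem \ref{thm:concentration} to intervals of length $\eta$ centered at an $\eta$-net of $K_\eps$ (and taking a union bound), we conclude that with overwhelming probability $\lambda_i(W_n) \in K_\eps$ and the number $N_I$ of eigenvalues of $W_n$ in $I := [\lambda_i(W_n) - \eta,\, \lambda_i(W_n) + \eta]$ satisfies $N_I = \Theta(n\eta)$. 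Cauchy's interlacing inequality then forces the same bound (up to an additive $O(1)$, absorbed since $n\eta \gg 1$) for $W_{n,j}$.

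Now let $H \subset \C^{n-1}$ be the span of the eigenvectors $u_k(W_{n,j})$ whose eigenvalues lie in $I$, so $d := \dim H = \Theta(n\eta)$ with overwhelming probability. The subspace $H$ depends only on $W_{n,j}$ and is therefore independent of $a_j$, so Lemma \ref{lemma:proj} applied conditionally to $\sqrt{n}\, a_j$ gives $\|P_H(\sqrt{n}\, a_j)\| = \sqrt{d} + O(K \log n)$ with overwhelming probability. Since $d = \Theta(K^2 \log^{22} n) \gg K^2 \log^2 n$, squaring yields $\|P_H a_j\|^2 = \Theta(d/n) = \Theta(\eta)$. Restricting the sum in the Schur identity to those indices $k$ with $u_k(W_{n,j}) \in H$, for which $(\lambda_k(W_{n,j}) - \lambda_i(W_n))^2 \leq \eta^2$, we obtain
$$ \sum_{k=1}^{n-1} \frac{|a_j^\ast u_k(W_{n,j})|^2}{(\lambda_k(W_{n,j}) - \lambda_i(W_n))^2} \geq \frac{\|P_H a_j\|^2}{\eta^2} = \Theta\!\left(\frac{1}{\eta}\right),$$
which is $\Omega(n/\log^{O(1)} n)$ since $K \ll \log^{O(1)} n$. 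Substituting back into the Schur identity yields $|u_i(W_n)^\ast e_j|^2 \ll \log^{O(1)} n / n$ with overwhelming probability, as required.

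The only step requiring genuine input beyond the abstract Tao--Vu template is verifying that $W_{n,j}$ has $\Theta(n\eta)$ eigenvalues in the short window $I$ of length $\Theta(\eta)$; this is where both condition \textbf{(A)} (to produce a positive lower bound on $\rho$ in the bulk) and the local law Theorem \ref{thm:concentration} (to translate that lower bound into an eigenvalue count on the microscopic scale $\eta$) are essential. Once this is in place, the projection lemma does the rest and the argument is a direct transcription of the Tao--Vu delocalization proof.
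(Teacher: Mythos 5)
Your proposal follows the same overall architecture as the paper's proof: reduce a single bulk coordinate to the Schur-complement identity (Lemma \ref{lemma:eigenvector_entry}), locate $\lambda_i(W_n)$ in the open bulk via Lemma \ref{lemma:epsbulk}, invoke the local ESD concentration (Theorem \ref{thm:concentration}) plus Cauchy interlacing to produce $\Theta(n\eta)$ eigenvalues of the minor in a short window around $\lambda_i(W_n)$, and then lower-bound the relevant quadratic form with the Projection Lemma \ref{lemma:proj}. Bounding the squared denominator directly rather than passing through the unsquared reduction the paper borrows from \cite{TVedge} is a harmless simplification.

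There is one genuine gap, though: the sentence ``The subspace $H$ depends only on $W_{n,j}$ and is therefore independent of $a_j$'' is false as written. You defined $I := [\lambda_i(W_n)-\eta,\lambda_i(W_n)+\eta]$, and $\lambda_i(W_n)$ is a function of the \emph{full} matrix $W_n$, hence of $a_j$. So $H$ (the span of minor eigenvectors with eigenvalues in $I$) does depend on $a_j$, and Lemma \ref{lemma:proj} cannot be applied conditionally as stated --- the lemma requires the subspace to be independent of the random vector being projected. You already have the right tool in hand: you set up an $\eta$-net of $K_\eps$ for the local-law step, but you need to use it here as well. For each \emph{deterministic} interval $J$ in the net define $H_J$ (which is genuinely measurable with respect to $W_{n,j}$ alone), apply Lemma \ref{lemma:proj} conditionally to each $H_J$, take a union bound over the $O(n)$ intervals, and then on the complementary event select the net interval $J$ containing $\lambda_i(W_n)$; every minor eigenvalue in $J$ is within $O(\eta)$ of $\lambda_i(W_n)$, so the denominator bound and the resulting $\Theta(1/\eta)$ lower bound go through unchanged. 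This is exactly how the paper handles the issue, by partitioning $(\alpha_k+\delta,\beta_k-\delta)$ into deterministic pieces of length $\log^A n/n$ rather than centering an interval at the random point $\lambda_i(W_n)$. With this one repair your argument matches the paper's.
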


We remind the reader that $s_0(\rho), s_1(\rho), \ldots, s_q(\rho)$ are the specific quantiles of $\rho$ defined in \eqref{eq:def:svalues}.  We begin with the following lemmas.

\begin{lemma} \label{lemma:epsbulk}
Let $M_n$ satisfy condition {\bf C0} and $\{D_n\}_{n \geq 1}$ satisfy condition {\bf C1} with measure $\mu_D$.  Suppose the entries of $M_n$ are bounded by $K$, where $K$ satisfies the assumptions of Theorem \ref{thm:concentration}.  Assume the density $\rho$ satisfies condition {\bf (A)}.  In particular, let $[\alpha_j, \beta_j], j = 1,\ldots,q$ be the intervals from Definition \ref{def:A}.  Let $\eps > 0$ be a small parameter.  Then there exists $\delta, c_\eps > 0$ such that the following holds.
\begin{enumerate}[(a)]
\item The intervals $(\alpha_j + \delta, \beta_j - \delta), j=1,\ldots,q$ are non-empty.
\item $\rho(x) \geq c_\eps$ for all $x \in \cup_{j=1}^q [\alpha_j +\delta, \beta_j - \delta]$. \label{bulk:non0}
\item If the index $i$ is in the $(\eps,n)$-bulk of $\rho$, then $\lambda_i(W_n) \in \cup_{j=1}^q (\alpha_j +\delta, \beta_j - \delta)$ with overwhelming probability.  In particular, if $(s_{j-1}(\rho) + \eps)n \leq i \leq (s_j(\rho) - \eps)n$ for some $j=1,\ldots,q$, then $\lambda_i(W_n) \in (\alpha_j + \delta, \beta_j -\delta)$ with overwhelming probability.  \label{bulk:eps}
\end{enumerate}
\end{lemma}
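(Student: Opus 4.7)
The plan is to choose $\delta > 0$ subject to constraints coming from all three parts, handling (a) and (b) as essentially tautological consequences of condition \textbf{(A)} before engaging the real content in (c).

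For (a), continuity of $\rho$ together with the positivity hypothesis \eqref{eq:condAnon0} forces each interval $(\alpha_j, \beta_j)$ to have positive length, so any $\delta < \frac{1}{3}\min_j(\beta_j - \alpha_j)$ makes $(\alpha_j + \delta, \beta_j - \delta)$ nonempty for every $j$. For (b), once such a $\delta$ is fixed, $\rho$ is continuous and strictly positive on the compact set $\bigcup_j [\alpha_j + \delta, \beta_j - \delta]$, hence attains a positive minimum there, which I would call $c_\eps$.

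For (c), I would shrink $\delta$ further and invoke Theorem \ref{thm:concentration}. Since $\rho$ is continuous on its compact support it is bounded by some $M > 0$; choose $\delta$ small enough that $2M\delta < \eps$. Fix $j$ and an index $i$ with $(s_{j-1}(\rho) + \eps)n \leq i \leq (s_j(\rho) - \eps)n$, and fix $L > 0$ large enough that $[-L+1,L-1]$ contains $\bigcup_j [\alpha_j, \beta_j]$ and the spectrum of $W_n$ with overwhelming probability (the operator norm bound under \textbf{C0} recalled just before Theorem \ref{thm:concentration}). Apply Theorem \ref{thm:concentration} to $I_- := [-L, \alpha_j + \delta]$ and $I_+ := [-L, \beta_j - \delta]$, both of length $\Theta(1)$ and therefore automatically satisfying the resolution threshold $|I| \geq K^2 \log^{22} n / n$ under the hypothesis $K \ll \log^{O(1)} n$. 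Using the elementary estimates $\int_{-L}^{\alpha_j + \delta} \rho(x)\,dx \leq s_{j-1}(\rho) + M\delta$ and $\int_{-L}^{\beta_j - \delta} \rho(x)\,dx \geq s_j(\rho) - M\delta$, the theorem yields, with overwhelming probability,
$$ N_{I_-} \leq \bigl(s_{j-1}(\rho) + \tfrac{\eps}{2}\bigr) n \quad \text{and} \quad N_{I_+} \geq \bigl(s_j(\rho) - \tfrac{\eps}{2}\bigr) n $$
for $n$ large. These are respectively strictly less than $i$ and strictly greater than $i$, which forces $\lambda_i(W_n) \in (\alpha_j + \delta, \beta_j - \delta)$ (for the right endpoint one may replace $\delta$ by $2\delta$ throughout the argument to upgrade the a priori $\leq$ to a strict $<$, or simply invoke the absolute continuity of the eigenvalue distribution arranged in Section \ref{sec:tools}). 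A union bound over the finitely many values of $j$ completes part (c).

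I do not foresee any serious obstacle: the lemma is fundamentally a translation of the already-proved global concentration statement Theorem \ref{thm:concentration} into a statement about positions of individual eigenvalues. The care required is essentially bookkeeping, namely choosing $\delta$ uniformly small in terms of $\eps$, $\sup\rho$, and the gaps of the support; pushing the truncation point $L$ safely beyond the support of $\rho$; and verifying that the fixed-length intervals $I_\pm$ beat the resolution threshold $K^2\log^{22} n/n$, which is automatic once $n$ is large.
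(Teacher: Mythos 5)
Your proof is correct and takes essentially the same route as the paper: parts (a) and (b) follow from continuity and compactness exactly as in the paper, and part (c) is the same application of Theorem \ref{thm:concentration} to the half-lines $(-\infty, \alpha_j+\delta]$ and $(-\infty, \beta_j-\delta]$ (intersected with a bounded window containing the spectrum) to sandwich $\lambda_i(W_n)$. The only cosmetic difference is how $\delta$ is constrained: you bound $\int_{\alpha_j}^{\alpha_j+\delta}\rho$ by $(\sup\rho)\delta$ and take $\delta$ small relative to $\sup\rho$, whereas the paper imposes the integral bound $\int_{\alpha_j}^{\alpha_j+\delta}\rho \leq \eps/2$ directly; the two are equivalent since $\rho$ is continuous on its compact support and hence bounded.
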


\begin{proof}
Define $\delta>0$ such that
$$ \int_{\alpha_j}^{\alpha_j + \delta} \rho(x) dx \leq \frac{\eps}{2} \quad \text{and} \quad \int_{\beta_{j} - \delta}^{\beta_j} \rho(x) dx \leq \frac{\eps}{2} \quad \text{for} \quad j=1,\ldots,q.  $$
Since we can always take $\delta$ smaller and still satisfy the above conditions, we assume $\delta$ is sufficiently small such that the intervals $(\alpha_j + \delta, \beta_j - \delta), j=1,\ldots,q$ are non-empty.

Since $\rho$ is a continuous probability density, it follows that $\rho$ attains its minimum on the compact set $\cup_{j=1}^q [\alpha_j +\delta, \beta_j - \delta]$.  Assumption \eqref{eq:condAnon0} from condition {\bf (A)} ensures that the minimum is non-zero.  This proves part \eqref{bulk:non0}.  

We now prove part \eqref{bulk:eps}.  Since $\|D_n\| = O(1)$, it follows that there exists $C>0$ such that $\lambda_i(W_n) \in [-C,C]$ for all $1 \leq i \leq n$ with overwhelming probability (see \cite[Chapter 5]{BSbook} or the previous remarks regarding $\frac{1}{\sqrt{n}} \|M_n\|$).   By Theorem \ref{thm:concentration}, with overwhelming probability,
$$ N_I = o(n) $$
where $I = [-C,C] \setminus \cup_{j=1}^q [\alpha_j, \beta_j]$.  Suppose $i$ is in the $(\eps,n)$-bulk of $\rho$.  Then there exists $1 \leq j \leq q$ such that
$$ (s_{j-1}(\rho) + \eps)n \leq i \leq (s_{j}(\rho) - \eps)n. $$
We will show that, with overwhelming probability, $\lambda_i(W_n) \in (\alpha_j + \delta, \beta_j - \delta)$.  By definition of $\delta$ above and Theorem \ref{thm:concentration}, we have that
\begin{align*}
	N_{(-\infty, \alpha_j + \delta]} = N_{[-C,\alpha_j + \delta]} &= n s_{j-1} + o(n) + N_{[\alpha_j,\alpha_j + \delta]} \\
	&\leq n s_{j-1}(\rho) + \frac{\eps}{2} n + o(n) \\
	&\leq n s_{j-1}(\rho) + \frac{3}{4} \eps n 
\end{align*}
with overwhelming probability.  Thus $\lambda_i(W_n) > \alpha_j + \delta$ with overwhelming probability.  Similarly, since 
$$ N_{[\beta_j - \delta, \beta_j]} \leq \frac{\eps}{2} n + o(n) \leq \frac{3}{4}\eps n $$
with overwhelming probability, it follows that 
$$ N_{[-C, \beta_j - \delta]} \geq n s_j(\rho) - \frac{3}{4} \eps n. $$
Therefore, we conclude that $\alpha_j + \delta < \lambda_i(W_n) < \beta_{j} - \delta$ with overwhelming probability.  
\end{proof}

From \cite{TVuniv}, we have the following result.

\begin{lemma} \label{lemma:eigenvector_entry}
Let 
$$ A_n = \begin{pmatrix} a & X^\ast \\ X & A_{n-1} \end{pmatrix} $$
be a $n \times n$ Hermitian matrix for some $a \in \R$ and $X \in \C^{n-1}$, and let $\begin{pmatrix} x \\ v\end{pmatrix}$ be a unit eigenvector of $A_n$ with eigenvalues $\lambda_i(A_n)$, where $x \in \C$ and $v \in \C^{n-1}$.  Suppose that none of the eigenvalues of $A_{n-1}$ are equal to $\lambda_i(A_n)$.  Then
$$ |x|^2 = \frac{1}{ 1 + \sum_{j=1}^{n-1} (\lambda_j(A_{n-1})-\lambda_i(A_{n}))^{-2} |u_j(A_{n-1})^\ast X|^2 } $$
where $u_j(A_{n-1})$ is a unit eigenvector corresponding to the eigenvalue $\lambda_j(A_{n-1})$.  
\end{lemma}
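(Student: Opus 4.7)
The plan is to derive the identity directly from the eigenvalue equation by a block computation, exploiting the assumption that $\lambda_i(A_n)$ is not an eigenvalue of $A_{n-1}$ so that $(A_{n-1} - \lambda_i(A_n) I)$ is invertible. Writing out
$$ A_n \begin{pmatrix} x \\ v \end{pmatrix} = \lambda_i(A_n) \begin{pmatrix} x \\ v \end{pmatrix} $$
in block form yields two equations: a scalar one $a x + X^\ast v = \lambda_i(A_n) x$, and a vector one
$$ x X + A_{n-1} v = \lambda_i(A_n) v. $$
I will only need the vector equation. Rearranging it gives $(A_{n-1} - \lambda_i(A_n) I) v = - x X$, and by the non-collision hypothesis I may invert to obtain
$$ v = - x (A_{n-1} - \lambda_i(A_n) I)^{-1} X. $$

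Next I will use the spectral decomposition $A_{n-1} = \sum_{j=1}^{n-1} \lambda_j(A_{n-1}) u_j(A_{n-1}) u_j(A_{n-1})^\ast$ to write the resolvent as
$$ (A_{n-1} - \lambda_i(A_n) I)^{-1} = \sum_{j=1}^{n-1} \frac{1}{\lambda_j(A_{n-1}) - \lambda_i(A_n)} u_j(A_{n-1}) u_j(A_{n-1})^\ast. $$
Applying this to $X$ and taking squared norm, the orthonormality of $\{u_j(A_{n-1})\}$ collapses the cross terms, giving
$$ \|v\|^2 = |x|^2 \sum_{j=1}^{n-1} \frac{|u_j(A_{n-1})^\ast X|^2}{(\lambda_j(A_{n-1}) - \lambda_i(A_n))^2}. $$

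Finally I will invoke the unit-norm condition $|x|^2 + \|v\|^2 = 1$ and substitute the previous display to conclude
$$ |x|^2 \left( 1 + \sum_{j=1}^{n-1} \frac{|u_j(A_{n-1})^\ast X|^2}{(\lambda_j(A_{n-1}) - \lambda_i(A_n))^2} \right) = 1, $$
which rearranges to the claimed formula. There is no real obstacle: the argument is a standard Schur-type computation and each step is forced by the block structure; the only place where the hypothesis enters is the inversion of $A_{n-1} - \lambda_i(A_n) I$, and the only nontrivial manipulation is reading off $\|v\|^2$ from the spectral decomposition, which is immediate from Parseval.
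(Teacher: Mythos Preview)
Your proof is correct. The paper does not actually supply a proof of this lemma; it simply quotes the result from \cite{TVuniv}. Your direct derivation via the block eigenvalue equation, inversion of $A_{n-1}-\lambda_i(A_n)I$, and the spectral decomposition is exactly the standard argument (and is essentially what appears in \cite{TVuniv}), so there is nothing to compare.
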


We now prove Theorem \ref{thm:eigenvectors}.  Let $\eps > 0$ and assume $i$ is in the $(\eps,n)$-bulk of $\rho$.  Thus there exists $1 \leq k \leq q$ such that $(s_{k-1}(\rho) +\eps) n \leq i \leq (s_k(\rho)-\eps)n$.  By the symmetry of $W_n$ and the union bound, it suffices to show that
$$ |u_i(W_n)^\ast e_1| \ll_{\eps} n^{-1/2} \log^{O(1)} n $$
with overwhelming probability.  

The continuity of the entries of $M_n$ ensure that the hypothesis of Lemma \ref{lemma:eigenvector_entry} is obeyed almost surely.  Using Lemma \ref{lemma:eigenvector_entry} and the arguments in \cite{TVedge}, our task reduces to verifying that
\begin{equation} \label{eq:showsumjt}
	\sum_{j: i-T_- \leq j \leq i+T_+} \frac{ |u_j(W_{n-1})^\ast Y|^2}{|\lambda_j(W_{n-1}) - \lambda_i(W_n)|} \gg_{\eps} \log^{-O(1)} n 
\end{equation}
with overwhelming probability for some $1 \leq T_-,T_+ \ll \log^{O(1)} n$, where $Y$ is the bottom left $(n-1) \times 1$ vector of $\frac{1}{\sqrt{n}} M_n$ and $W_{n-1}$ is the bottom right $(n-1) \times (n-1)$ minor of $W_n$.  

By Lemma \ref{lemma:epsbulk}, there exists $\delta>0$ such that $\lambda_i(W_n) \in (\alpha_k+\delta, \beta_k-\delta)$ with overwhelming probability.  Moreover, there exists $c_\eps>0$ such that $\rho(x) \geq c_\eps$ for all $x \in (\alpha_k+\delta, \beta_k-\delta)$.  

Let $A$ be a large constant to be chosen later.  By partitioning the interval $(\alpha_k+\delta, \beta_k-\delta)$, it follows that there exists an interval $I$ of length $\log^A n/n$ that contains $\lambda_i(W_n)$ with overwhelming probability.  By Theorem \ref{thm:concentration}, $I$ contains $\Theta_{\eps}(\log^A n)$ eigenvalues of $W_n$ with overwhelming probability.  We now note that if $\lambda_j(W_n), \lambda_{j+1}(W_n) \in I$, then by Cauchy's interlacing property, 
$$ |\lambda_{j}(W_{n-1}) - \lambda_{i}(W_n)| \ll \frac{\log^A n}{n}. $$
Thus, for a suitable choice of $T_-,T_+$, 
\begin{align*}
	\sum_{j: i-T_- \leq j \leq i+T_+} \frac{ |u_j(W_{n-1})^\ast Y|^2}{|\lambda_j(W_{n-1}) - \lambda_i(W_n)|} & \gg \sum_{j: \lambda_j(W_n),\lambda_{j+1}(W_n) \in I} \frac{ |u_j(W_{n-1})^\ast Y|^2}{|\lambda_j(W_{n-1}) - \lambda_i(W_n)|} \\
	& \gg n \log^{-A} n \sum_{j: \lambda_j(W_n),\lambda_{j+1}(W_n) \in I} |u_j(W_{n-1})^\ast Y|^2 \\
	& \gg \log^{-A} n \| \pi_H(X) \|^2
\end{align*}
where $H$ is the span of $u_j(W_{n-1})$ for $\lambda_j(W_n), \lambda_{j+1}(W_n) \in I$ and $X = \sqrt{n} Y$.  Condition \eqref{eq:showsumjt} now follows from Lemma \ref{lemma:proj} by taking $A$ sufficiently large.  The proof of Theorem \ref{thm:eigenvectors} is complete.  

\section{Lower Bound on the Eigenvalue Gap} \label{sec:gap}

This section is devoted to Theorem \ref{thm:gap}.  Define 
$$ A_n := \sqrt{n} M_n + n D_n = n W_n. $$  

We will need the following deterministic lemma from \cite{ESY3} (see also \cite{TVuniv}). 

\begin{lemma}[Interlacing identity]
Let $A_n$ be an $n \times n$ Hermitian matrix, let $A_{n-1}$ be the top $n-1 \times n-1$ minor, let $a_{nn}$ be the bottom right component, and let $X \in \C^{n-1}$ be the rightmost column with the bottom entry $a_{nn}$ removed.  Suppose that $X$ is not orthogonal to any of the unit eigenvectors $u_j(A_{n-1})$ of $A_{n-1}$.  Then we have
\begin{equation} \label{eq:interlace}
	\sum_{j=1}^{n-1} \frac{ |u_j(A_{n-1})^\ast X|^2 }{\lambda_j(A_{n-1}) - \lambda_{i}(A_n) } = a_{nn} - \lambda_{i}(A_n)
\end{equation}
for every $1 \leq i \leq n$.
\end{lemma}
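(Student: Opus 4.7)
The plan is to derive the identity by a Schur-complement computation: combine the block form of the eigenvalue equation $A_n\psi = \lambda_i(A_n)\psi$ with the spectral expansion of the resolvent of $A_{n-1}$.

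First, I would write $A_n$ in block form as
\[
A_n = \begin{pmatrix} A_{n-1} & X \\ X^\ast & a_{nn} \end{pmatrix}
\]
and pick a unit eigenvector $\psi = \begin{pmatrix} v \\ x \end{pmatrix}$ of $A_n$ for the eigenvalue $\lambda := \lambda_i(A_n)$, with $v \in \C^{n-1}$ and $x \in \C$. Expanding $A_n\psi = \lambda\psi$ block by block yields
\[
(A_{n-1} - \lambda I)\,v = -xX, \qquad X^\ast v = (\lambda - a_{nn})\,x.
\]

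The next step is to use the non-orthogonality hypothesis to rule out two potentially degenerate situations. Suppose first that $\lambda = \lambda_j(A_{n-1})$ for some $j$. Pairing the first block equation with $u_j(A_{n-1})$ gives $0 = -x\,u_j(A_{n-1})^\ast X$, so combined with the hypothesis $u_j(A_{n-1})^\ast X \neq 0$ this forces $x = 0$. But then the first equation makes $v$ an eigenvector of $A_{n-1}$ lying in the $\lambda_j$-eigenspace, while the second forces $X^\ast v = 0$, once again contradicting the non-orthogonality hypothesis (choosing an orthonormal basis of the eigenspace containing $v/\|v\|$ if the eigenvalue is multiple). The same reasoning rules out $x = 0$ when $\lambda \notin \mathrm{spec}(A_{n-1})$, since then the first equation would already force $v = 0$. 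Hence $\lambda \notin \mathrm{spec}(A_{n-1})$ and $x \neq 0$.

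With these degeneracies ruled out, I would invert the first block equation to get $v = -x(A_{n-1} - \lambda I)^{-1}X$, substitute into the second, and divide by $x$ to reach the Schur-complement identity
\[
a_{nn} - \lambda = X^\ast (A_{n-1} - \lambda I)^{-1} X.
\]
Inserting the spectral decomposition
\[
(A_{n-1} - \lambda I)^{-1} = \sum_{j=1}^{n-1} \frac{u_j(A_{n-1})\,u_j(A_{n-1})^\ast}{\lambda_j(A_{n-1}) - \lambda}
\]
on the right-hand side then yields \eqref{eq:interlace}. The manipulations are essentially mechanical; the only genuinely delicate point is the dichotomy used to exclude $\lambda \in \mathrm{spec}(A_{n-1})$ and $x = 0$, which is where the non-orthogonality hypothesis enters in an essential way.
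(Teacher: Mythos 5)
The paper does not prove this lemma itself; it cites it as a known deterministic fact from \cite{ESY3} (see also \cite{TVuniv}). Your Schur-complement argument is precisely the standard proof given in those references (eliminate $v$ from the two block equations and expand the resolvent spectrally), so this is the same approach. One small point worth tightening: in the subcase where $\lambda_i(A_n)$ hits a \emph{multiple} eigenvalue of $A_{n-1}$, the hypothesis ``$X$ is not orthogonal to $u_j(A_{n-1})$ for every $j$'' refers to a fixed orthonormal basis, and your appeal to re-choosing the basis to contain $v/\|v\|$ is not literally licensed by that hypothesis (a vector can be non-orthogonal to each of $u_1,u_2$ yet orthogonal to some combination of them). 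In the paper's actual use of the lemma this is moot, since the entries are taken to have absolutely continuous distribution and the eigenvalues of $A_{n-1}$ are almost surely simple, in which case the eigenspace is one-dimensional and your argument is airtight.
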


We now prove Theorem \ref{thm:gap}.  Fix $\eps, c_0>0$.  We write $n_0, i_0$ for $n,i$; thus $(s_{k-1}(\rho) + \eps)n_0 \leq i_0 \leq (s_k(\rho) - \eps) n_0$ for some $k=1,\ldots,q$.  It thus suffices to show that
$$ \lambda_{i_0+1}(A_{n_0}) - \lambda_{i_0}(A_{n_0}) > n_0^{-c_0} $$
with high probability.    

For each $n_0/2 \leq n \leq n_0$ let $A_n$ be the top left $n \times n$ minor of $A_{n_0}$.  Following \cite{TVuniv,TVedge} we define the regularized gap
$$ g_{i,l,n} := \inf_{1 \leq i_- \leq i-l < i_- \leq i_+ \leq n} \frac{\lambda_{i_+}(A_n) - \lambda_{i_-}(A_n)}{\min(i_+-i_-,\log^{C_1} n_{0})^{\log^{0.9} n_0}}, $$
for all $n_0/2 \leq n \leq n_0$ and $1 \leq i-l < i \leq n$, where $C_1$ is a large constant to be chosen later.  It will suffice to show that 
$$ g_{i_0,1,n_0} > n^{-c_0} $$
with high probability.  Let $X_n$ be the rightmost column of $A_{n+1}$ with the bottom coordinate removed.  We then have the following deterministic lemma, which is a slightly modified version of \cite[Lemma 51]{TVuniv}.

\begin{lemma}[Backwards propagation of gap] \label{lemma:propagation}
Suppose that $n_0/2 \leq n < n_0$ and $l \leq \eps n/10$ is such that
$$ g_{i_0,l,n+1} \leq \delta $$
for some $0 < \delta \leq 1$ (which can depend on $n$), and that
$$ g_{i_0,l+1,n} \geq 2^m g_{i_0,l,n+1} $$
for some $m \geq 0$ with
$$ 2^m \leq \delta^{-1/2}. $$
Then one of the following statements hold:
\begin{enumerate}[(i)]
\item (Macroscopic spectral concentration) \label{item:macro} There exists $1 \leq i_- < i_+ \leq n+1$ with $i_+ - i_- \geq \log^{C_1/2} n$ such that
$$ |\lambda_{i_+}(A_{n+1}) - \lambda_{i_-}(A_{n+1})| \leq \delta^{1/4} \exp(\log^{0.95} n)(i_+ - i_i). $$
\item (Small inner product) \label{item:smallip} There exists $\eps n/2 \leq i_- \leq i_0 - l < i_0 \leq i_+ \leq (1-\eps/2)n$ with $i_+-i_- \leq \log^{C_1/2} n$ such that
$$ \sum_{i_- \leq j < i_+} |u_j(A_n)^\ast X_n|^2 \leq \frac{n(i_+ - i_-)}{2^{m/2} \log^{0.01} n}. $$
\item (Large coefficient) \label{item:coef} We have
$$ |a_{n+1,n+1}| \geq \frac{ n \exp(-\log^{0.95} n) }{ 2 \delta^{1/2} }. $$
\item (Large eigenvalue) \label{item:eigenvalue} For some $1 \leq i \leq n+1$ one has
$$ |\lambda_i(A_{n+1})| \geq \frac{ n \exp(-\log^{0.95} n) }{2 \delta^{1/2}}. $$
\item (Large inner product in bulk) \label{item:bulkip} There exists $\eps n/10 \leq i \leq (1-\eps/10)n$ such that
$$ |u_i(A_n)^\ast X_n|^2 \geq \frac{n \exp(-\log^{0.96} n)}{\delta^{1/2}}. $$
\item (Large row) \label{item:row} We have
$$ \|X_n\|^2 \geq \frac{n^2 \exp(-\log^{0.96} n) }{\delta^{1/2}}. $$
\item (Large inner product near $i_0$) \label{item:largeip} There exits $\eps n/10 \leq i \leq (1-\eps/10) n$ with $|i-i_0| \leq \log^{C_1} n$ such that
$$ |u_i(A_n)^\ast X_n|^2 \geq 2^{m/2} n \log^{0.8} n. $$
\end{enumerate}
\end{lemma}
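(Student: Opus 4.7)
The strategy is to mirror the argument of Lemma 51 in \cite{TVuniv}: assume that alternatives (i)--(vi) all fail, and derive (vii). Each failing alternative will provide a quantitative bound on one piece of a master identity built from the interlacing identity \eqref{eq:interlace}.

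The hypothesis $g_{i_0,l,n+1} \leq \delta$ produces witnesses $1 \leq i_- \leq i_0 - l < i_0 \leq i_+ \leq n+1$ with
$$\lambda_{i_+}(A_{n+1}) - \lambda_{i_-}(A_{n+1}) \leq \delta \min(i_+ - i_-, \log^{C_1} n_0)^{\log^{0.9} n_0}.$$
Using the crude estimate $(\log^{C_1} n_0)^{\log^{0.9} n_0} \leq \exp(\log^{0.95} n)$ (valid for $n$ large, since $n_0/2 \leq n \leq n_0$), the case $i_+ - i_- \geq \log^{C_1/2} n$ immediately yields alternative (i) after absorbing $\delta$ into $\delta^{1/4}$. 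So from now on I may assume $i_+ - i_- < \log^{C_1/2} n$, which together with $l \leq \eps n/10$ places both indices comfortably in the $\eps/2$-bulk of $A_{n+1}$.

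The core step is to apply \eqref{eq:interlace} to $A_{n+1}$ at both $\lambda_{i_-}(A_{n+1})$ and $\lambda_{i_+}(A_{n+1})$ and subtract, yielding the master identity
$$\sum_{j=1}^n \frac{|u_j(A_n)^\ast X_n|^2}{(\lambda_j(A_n) - \lambda_{i_-}(A_{n+1}))(\lambda_j(A_n) - \lambda_{i_+}(A_{n+1}))} = -1.$$
I would split the $j$-sum into a tail range (a) of $j$ outside the $\eps/10$-bulk of $A_n$, a middle range (b) of $j$ in the bulk with $|j - i_0| > \log^{C_1} n$, and a central range (c) with $|j - i_0| \leq \log^{C_1} n$. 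Range (a) is controlled by the failure of (iii), (iv), and (vi): the denominators are at least a constant multiple of $n$, so the total contribution is $o(1)$. For range (b), the amplification hypothesis $g_{i_0,l+1,n} \geq 2^m g_{i_0,l,n+1}$, combined with Cauchy interlacing to pass from $\lambda_j(A_n)$ to the nearby $\lambda_j(A_{n+1}), \lambda_{j+1}(A_{n+1})$, forces $|\lambda_j(A_n) - \lambda_{i_\pm}(A_{n+1})| \gtrsim 2^m g_{i_0,l,n+1} |j-i_0|^{\log^{0.9} n_0}$; a dyadic decomposition in $|j - i_0|$, with the inner-product mass in each block controlled by the failure of (ii) and (v), yields a range-(b) contribution of size $O(2^{-m/2} \log^{O(1)} n)$, again $o(1)$ using $2^m \leq \delta^{-1/2}$. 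The central range (c) must then account for essentially all of the $-1$ on the right-hand side. Since Cauchy interlacing confines each $\lambda_j(A_n)$ in this range to a narrow interval around $\lambda_{i_0}(A_{n+1})$, the denominators can be very small but there are only polylogarithmically many terms, so at least one inner product $|u_j(A_n)^\ast X_n|^2$ with $|j - i_0| \leq \log^{C_1} n$ must exceed $2^{m/2} n \log^{0.8} n$, which is precisely alternative (vii).

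The main obstacle will be the dyadic bookkeeping in range (b): the constant $C_1$ must be chosen large enough that the cutoff $\log^{C_1} n$ absorbs both the factor $(\log^{C_1} n_0)^{\log^{0.9} n_0}$ coming from the normalization in $g_{i,l,n}$ and the logarithmic losses from summing over dyadic scales in $|j - i_0|$, yet not so large as to trivialize the bound in (vii) (since $C_1$ controls the size of the candidate set in range (c)). Calibrating this constant and carefully propagating Cauchy interlacing through the dyadic scales is where the quantitative heart of the argument lies; the rest is the familiar master-identity case analysis of \cite{TVuniv}, adapted only in that eigenvalues of $A_n$ now live on the scale $n$ rather than $\sqrt{n}$ due to the presence of the external source $nD_n$.
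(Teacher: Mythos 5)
Your high-level strategy -- run the contradiction argument of \cite[Lemma 51, Section 6]{TVuniv}, using the interlacing identity \eqref{eq:interlace} at $\lambda_{i_\pm}(A_{n+1})$ and the failure of alternatives (i)--(vi) to force (vii) -- is exactly what the paper does. In fact the paper's written proof is even more terse: it simply defers wholesale to the TVuniv argument and records the single modification needed, namely that because $a_{n+1,n+1}$ now carries the external-source contribution $n\,d_{n+1,n+1}$, the single TVuniv alternative bounding $|a_{n+1,n+1} - \lambda_i(A_{n+1})|$ is split into (iii) and (iv), and the needed bound $|a_{n+1,n+1} - \lambda_i(A_{n+1})| \leq n\exp(-\log^{0.95}n)/\delta^{1/2}$ is recovered by the triangle inequality.

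That one modification is precisely the point your reconstruction glosses over, and it exposes a small inconsistency in your write-up. Your ``master identity'' is formed by subtracting \eqref{eq:interlace} at $\lambda_{i_-}(A_{n+1})$ and $\lambda_{i_+}(A_{n+1})$ and dividing, which cancels $a_{n+1,n+1}$ entirely. If that were the only use of the interlacing identity, alternative (iii) would never enter; yet you invoke (iii) in controlling the tail range (a), and the paper's note makes clear that the TVuniv argument really does use the un-subtracted identity (and hence $|a_{n+1,n+1} - \lambda_i(A_{n+1})|$) at some stage. Two smaller imprecisions: for $j$ outside the $\eps/10$-bulk of $A_n$ the relevant denominators are of size $\gg n^2$, not $\gg n$ (the spectrum of $A_n$ lives at scale $n$ and the bulk is separated from the tail by $\Theta(n)$); and the closing parenthetical about a change of scale relative to TVuniv is not right -- there $A_n = \sqrt{n}M_n$ already has eigenvalues at scale $n$, so adding $nD_n$ does not change the spectral scale, which is exactly why the TVuniv quantitative constants carry over with so little modification. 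None of this changes the verdict that you identified the correct route; but a complete verification would have to make the role of (iii)--(iv) explicit in the way the paper indicates.
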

\begin{remark}
Lemma \ref{lemma:propagation} is almost identical to \cite[Lemma 51]{TVuniv} except for conditions \eqref{item:coef} and \eqref{item:eigenvalue} which have only been modified slightly.  
\end{remark}
\begin{proof}[Proof of Lemma \ref{lemma:propagation}]
The proof of Lemma \ref{lemma:propagation} follows the proof of \cite[Lemma 51]{TVuniv} in \cite[Section 6]{TVuniv} almost exactly.  Only the following changes have to be made.
\begin{itemize}
\item Under the assumption that conditions \eqref{item:coef} and \eqref{item:eigenvalue} fail, 
$$ |a_{n+1,n+1} - \lambda_{i}(A_{n+1})| \leq \frac{n \exp(-\log^{0.95} n)}{\delta^{1/2}} $$
by the triangle inequality.  Since this is the same bound obtained in \cite[Section 6]{TVuniv}, no further changes are required. 
\end{itemize}
\end{proof}

Next, we show that each of the bad events \eqref{item:macro}-\eqref{item:largeip} occur with small probability.  

\begin{proposition}[Bad events are rare] \label{prop:bad}
Suppose that $n_0/2 \leq n < n_0$ and $l \leq \eps n/10$, and set $\delta := n_0^{-\kappa}$ for some sufficiently small fixed $\kappa >0$.  Then:
\begin{enumerate}[(a)]
\item \label{item:highprob} The events \eqref{item:macro}, \eqref{item:coef}, \eqref{item:eigenvalue}, \eqref{item:bulkip}, and \eqref{item:row} in Lemma \ref{lemma:propagation} all fail with high probability.
\item \label{item:condition} There is a constant $C'$ such that all the coefficients of the eigenvectors $u_j(A_n)$ for $j$ in the $(\eps/2,n)$-bulk of $\rho$ are of magnitude at most $n^{-1/2} \log^{C'} n$ with overwhelming probability.  Conditioning $A_n$ to be a matrix with this property, the events \eqref{item:smallip} and \eqref{item:largeip} occur with a conditional probability of at most $2^{-\kappa m} + n^{-\kappa}$.  
\item \label{item:condition2} Furthermore, there is a constant $C_2$ (depending on $C',\kappa,C_1$) such that if $l \geq C_2$ and $A_n$ is conditioned as in \eqref{item:condition}, then \eqref{item:smallip} and \eqref{item:largeip} in fact occur with a conditional probability of at most $2^{-\kappa m}\log^{-2C_1} n + n^{-\kappa}$.  
\end{enumerate}
\end{proposition}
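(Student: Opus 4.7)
The plan is to address the seven bad events of Lemma \ref{lemma:propagation} separately, splitting them into those controllable by global estimates (events (i), (iii), (iv), (v), (vi) in part (a)) and those requiring the interaction between the independent column $X_n$ and the eigenbasis of $A_n$ (events (ii), (vii) in parts (b) and (c), which invoke Theorem \ref{thm:eigenvectors} together with Lemma \ref{lemma:tail}).

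For part (a), event (i) is ruled out by Theorem \ref{thm:concentration} applied to $W_{n+1}=A_{n+1}/(n+1)$: for intervals of length $\geq \log^{22}n/n$ in the bulk, $N_I=(n+1)|I|\rho+o(n|I|)$, and Lemma \ref{lemma:epsbulk} ensures $\rho$ is bounded below; scaling back up by $n+1$ forces $\lambda_{i_+}(A_{n+1})-\lambda_{i_-}(A_{n+1})\gtrsim_\eps(i_+-i_-)$ whenever $i_+-i_-\geq \log^{C_1/2}n$, easily exceeding the threshold $n^{-\kappa/4+o(1)}(i_+-i_-)$. Events (iii) and (iv) fail by direct magnitude bounds: the truncation $|\zeta_{ij}|\leq \log^{C+1}n$ gives $|a_{n+1,n+1}|=O(n)$, while the standard operator norm estimate recalled just before Theorem \ref{thm:concentration} gives $\|A_{n+1}\|\ll n$, both far below the threshold $\Omega(n^{1+\kappa/2-o(1)})$. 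Event (vi) fails because $\|X_n\|^2=n\sum_i|\zeta_{i,n+1}|^2=n^2(1+o(1))$ with high probability. For event (v), conditioning on $A_n$, the sum $u_i(A_n)^\ast X_n=\sqrt n\sum_j u_i(A_n)_j\zeta_{j,n+1}$ is a sum of independent truncated sub-exponential terms with total variance $n$, and Bernstein's inequality gives an $\exp(-n^{\kappa/2-o(1)})$ tail that easily survives a union bound over the $O(n)$ bulk indices.

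For parts (b) and (c), Theorem \ref{thm:eigenvectors} provides with overwhelming probability the event $\mathcal{G}$ that every bulk eigenvector $u_j(A_n)$ has entries bounded by $\sigma:=n^{-1/2}\log^{C'}n$. Conditioning on $A_n\in\mathcal{G}$, the column $X_n$ remains independent of $A_n$, and the variables $S_j:=u_j(A_n)^\ast X_n/\sqrt n$ have mean zero, variance $1$, and incompressibility $\sigma$; Lemma \ref{lemma:tail} applies directly. Event (vii), $|S_i|^2\geq 2^{m/2}\log^{0.8}n$ for some $|i-i_0|\leq \log^{C_1}n$, follows from the upper tail $\Prob(|S_i|\geq 2^{m/4}\log^{0.4}n)\ll \exp(-c\,2^{m/2}\log^{0.8}n)+\sigma$ together with a union bound. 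Event (ii), recast as $\|\vec S\|^2\leq N/(2^{m/2}\log^{0.01}n)$ with $\vec S=(S_{i_-},\dots,S_{i_+-1})$ and $N:=i_+-i_-\leq \log^{C_1/2}n$, follows from the lower tail $\Prob(\|\vec S\|\leq t)\ll (2^{m/2}\log^{0.01}n)^{-N/8}+N^4t^{-3}\sigma$ with $t^2=N/(2^{m/2}\log^{0.01}n)$, again after a union bound over pairs $(i_-,i_+)$. Both yield the announced $2^{-\kappa m}+n^{-\kappa}$.

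Part (c) is a sharper accounting of the same estimates. When $l\geq C_2$ the range $N$ in event (ii) is at least $C_2$, so choosing $C_2$ large in terms of $C_1,C',\kappa$ makes the first term $(2^{m/2}\log^{0.01}n)^{-N/8}$ smaller than $2^{-\kappa m}\log^{-4C_1}n$, and the $O(\log^{2C_1}n)$ union bound over $(i_-,i_+)$ leaves the improved factor $\log^{-2C_1}n$; a parallel argument treats event (vii) by slightly raising the threshold to absorb a $\log^{2C_1}n$ loss into the exponential tail $\exp(-c\,2^{m/2}\log^{0.8}n)$. I expect the main obstacle to be the bookkeeping in part (c): the choice of $C_2$ must balance the exponential gain $N/8$ in the lower tail against the polylogarithmic loss from the union bound, and one must verify that the $\sigma$-tail contribution $N^4t^{-3}\sigma$ never dominates — this requires a delicate choice of the threshold parameter and is where the $\log^{C'}n$ delocalization from Theorem \ref{thm:eigenvectors} is used to full strength.
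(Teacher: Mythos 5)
Your proposal is correct and follows essentially the same route as the paper: part (a) is disposed of by Theorem \ref{thm:concentration}, the truncation bound, and the operator norm bound; parts (b) and (c) condition on the delocalization event from Theorem \ref{thm:eigenvectors} and then apply Lemma \ref{lemma:tail} (upper tail for event (vii), lower tail for event (ii)), with part (c) exploiting $N=i_+-i_-\geq C_2$ to gain the extra $\log^{-2C_1}n$ factor. The minor differences are cosmetic: you invoke Bernstein for event (v) where the paper uses the projection lemma (Lemma \ref{lemma:proj}); you omit the paper's explicit case split on whether $2^m$ (resp.\ $i_+-i_-$) exceeds a large power of $\log n$ -- in that regime the paper switches to Lemma \ref{lemma:proj} for an overwhelming-probability bound, though as you note Lemma \ref{lemma:tail} also suffices for the target $2^{-\kappa m}+n^{-\kappa}$; and in part (c) you propose a separate threshold adjustment for event (vii), which is unnecessary since the paper's estimate \eqref{eq:psi} already carries the $\log^{-2C_1}n$ factor and serves both (b) and (c) at once.
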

\begin{proof}
Event \eqref{item:macro} fails with overwhelming probability due to Theorem \ref{thm:concentration}.  For $n$ sufficiently large, the events \eqref{item:coef} and \eqref{item:row} are empty due to the truncation assumption.  Moreover, event \eqref{item:bulkip} fails with overwhelming probability due to the truncation assumption and Lemma \ref{lemma:proj} for $n$ large enough.  Since the operator norm of $A_n$ is $O(n)$ with overwhelming probability (see \cite[Chapter 5]{BSbook} or our previous remarks regarding the norm of $W_n$), it follows that event \eqref{item:eigenvalue} fails with overwhelming probability.  The proof of \eqref{item:highprob} is complete.

We now prove \eqref{item:condition} and \eqref{item:condition2} at the same time.  By Theorem \ref{thm:eigenvectors}, there exists a positive constant $C'$ such that all the coefficients of the eigenvectors $u_j(A_n)$ for $j$ in the $(\eps/2,n)$-bulk of $\rho$ are of magnitude at most $n^{-1/2} \log^{C'} n$ with overwhelming probability.  

We begin with event \eqref{item:largeip}.  There are two cases to consider.  If $2^m \geq \log^{C_3} n$ for some sufficiently large constant $C_3$, then event \eqref{item:largeip} fails with overwhelming probability due to Lemma \ref{lemma:proj} and the truncation assumption.  If $2^m \leq \log^{O(1)} n$, we need to show that 
\begin{equation} \label{eq:psi}
	\P(|S_i| \geq 2^{m/4} \log^{0.4} n) \leq 2^{-\kappa m} \log^{-2C_1} n + n^{-\kappa}, 
\end{equation}
where $S_i \in \C$ is the random walk
\begin{equation} \label{eq:sidef}
	S_i := \zeta_{1,n+1,} w_{i,1} + \cdots + \zeta_{n,n+1} w_{i,n} 
\end{equation}
and $w_{i,1}, \ldots, w_{i,n}$ are the coefficients of the unit eigenvector $u_i(A_n)$.  Since $S_i$ has mean zero and unit variance, Lemma \ref{lemma:tail} gives the desired bound in \eqref{eq:psi} by taking $t=2^{m/4} \log^{0.4} n$ and $\sigma = n^{-1/2} \log^{O(1)} n$.  

We now consider the event \eqref{item:smallip}.  Again we consider two cases.  If $i_+-i_- \geq \log^{C_3} n$ for some sufficiently large constant $C_3$, then \eqref{item:smallip} fails with overwhelming probability by Lemma \ref{lemma:proj} and the truncation assumption.  Assume $i_+ - i_- \leq \log^{O(1)} n$.  By Theorem \ref{thm:concentration} and condition {\bf (A)}, this implies that $i_+$ and $i_-$ are in the $(\eps/2,n)$-bulk of $\rho$ with overwhelming probability.  We can rewrite event \eqref{item:smallip} as
$$ \|\vec{S}\| \leq \frac{(i_+ - i_-)^{1/2}}{2^{m/4} \log^{0.005} n} $$
where $\vec{S} \in \C^{i_+-i_-}$ with components $S_j$ defined as in \eqref{eq:sidef}.  Since the above event is non-increasing in $m$, without loss of generality we assume $2^m \leq n^{1/100}$ (say).  So by Lemma \ref{lemma:tail}, we conclude 
\begin{equation} \label{eq:pvecst}
	\P(\|\vec{S}\|\leq t) \ll \left( \frac{t}{(i_+ - i_-)^{1/2}} \right)^{(i_+ - i_-)/4} + n^{-1/2} t^{-3} \log^{O(1)} n. 
\end{equation}
Taking $t = \frac{(i_+ - i_-)^{1/2}}{2^{m/4} \log^{0.005} n}$ and the trivial bound $i_+ - i_i \geq 1$ completes the proof of \eqref{item:condition}.  Applying \eqref{eq:pvecst} with $t = \frac{(i_+ - i_-)^{1/2}}{2^{m/4} \log^{0.005} n}$ and $i_+ - i_- \geq l \geq C_2$ yields 
$$ \P \left( \|\vec{S}\| \leq \frac{(i_+ - i_-)^{1/2}}{2^{m/4} \log^{0.005} n} \right) \ll (2^{m/4} \log^{0.005} n)^{-C_2/4} + n^{-1/2} 2^{3m/4} \log^{O(1)} n. $$
This completes the proof of \eqref{item:condition2} by taking $C_2$ sufficiently large (and recalling that $2^m \leq n^{1/100}$).  
\end{proof}

Using Lemma \ref{lemma:propagation} and Proposition \ref{prop:bad}, the proof of Theorem \ref{thm:gap} can be concluded by following the proof of \cite[Theorem 19]{TVuniv} in \cite[Section 3.5]{TVuniv}.  Only the following changes have to be made:
\begin{itemize}
\item In the definition of the event $E_n$, the range $\eps n/2 \leq j \leq (1-\eps/2)n$ needs to be changed to include only $j$ in the $(\eps/2,n)$-bulk of $\rho$.
\item All references to \cite[Lemma 51]{TVuniv} need to be replaced with Lemma \ref{lemma:propagation}.
\item All references to \cite[Proposition 53]{TVuniv} need to be replaced with Proposition \ref{prop:bad}.
\end{itemize}

\section{The Four Moment Theorem} \label{sec:4moment}

This section is devoted to Theorem \ref{thm:4moment}.  We begin with the following result from \cite{TVuniv}.

\begin{proposition}[Replacements given a good configuration] \label{prop:replacement}
There exists a positive constant $C_1$ such that the following holds.  Let $k \geq 1$ and $\eps_1>0$, and assume $n$ sufficiently large depending on these parameters.  Let $1 \leq i_1 < \cdots < i_k \leq n$.  For a complex parameter $z$, let $A(z)$ be a (deterministic) family of $n \times n$ Hermitian matrices of the form
$$ A(z) = A(0) + z e_p e^\ast_q + \bar{z} e_q e^\ast_p $$
where $e_p, e_q$ are unit vectors.  We assume that for every $1 \leq j \leq k$ and every $|z| \leq n^{1/2+\eps_1}$ whose real and imaginary parts are multiples of $n^{-C_1}$, we have
\begin{itemize}
\item (Eigenvalue separation) For any $1 \leq i \leq n$ with $|i-i_j| \geq n^{\eps_1}$, we have
\begin{equation} \label{eq:evaluesep}
	|\lambda_i(A(z)) - \lambda_{i_j}(A(z))| \geq n^{-\eps_1}|i-i_j|. 
\end{equation}
\item (Delocalization at $i_j$) If $P_{i_j}(A(z))$ is the orthogonal projection to the eigenspace associated to $\lambda_{i_j}(A(z))$, then
\begin{equation} \label{eq:delocal}
\| P_{i_j}(A(z))e_p \|, \|P_{i_j}(A(z))e_q \| \leq n^{-1/2 + \eps_1}. 
\end{equation}
\item For every $\alpha \geq 0$, 
\begin{equation} \label{eq:delocal2}
	\| P_{i_j,\alpha}(A(z))e_p\|,\|P_{i_j,\alpha}(A(z)) e_q \| \leq 2^{\alpha/2} n^{-1/2+\eps_1} 
\end{equation}
whenever $P_{i_j,\alpha}$ is the orthogonal projection to the eigenspaces corresponding to eigenvalues $\lambda_i(A(z))$ with $2^\alpha \leq |i-i_j| < 2^{\alpha+1}$.
\end{itemize}
We say that $A(0)$,$e_p$, $e_q$ are a good configuration for $i_1,\ldots, i_k$ if the above properties hold.  Assuming this good configuration, then we have
\begin{equation} \label{eq:efzefz'}
	\E F(\zeta) = \E F(\zeta') + O(n^{-(r+1)/2+O(\eps_1)})
\end{equation}
whenever
$$ F(z) := G(\lambda_{i_1}(A(z)), \ldots, \lambda_{i_k}(A(z)), Q_{i_1}(A(z)), \ldots, Q_{i_k}(A(z))), $$
and 
$$ G = G(\lambda_{i_1}, \ldots, \lambda_{i_k}, Q_{i_1}, \ldots, Q_{i_k}) $$
is a smooth function from $\R^k \times \R^k_+ \rightarrow \R$ that is supported on the region
$$ Q_{i_1}, \ldots, Q_{i_k} \leq n^{\eps_1} $$
and obeys the derivative bounds
$$ |\nabla^jG| \leq n^{\eps_1} $$
for all $0 \leq j \leq 5$, and $\zeta,\zeta'$ are random variables with $|\zeta|,|\zeta'| \leq n^{1/2+\eps_1}$ almost surely, which match to order $r$ for some $r=2,3,4$.  

If $G$ obeys the improved derivative bounds
$$ |\nabla^jG| \leq n^{-Cj\eps_1} $$
for $0 \leq j \leq 5$ and some sufficiently large absolute constant $C$, then we can strengthen $n^{-(r+1)/2+O(\eps_1)}$ in \eqref{eq:efzefz'} to $n^{-(r+1)/2-\eps_1}$. 
\end{proposition}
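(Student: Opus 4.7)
My plan is to view $F$ as a smooth function of the two real variables $x := \Re z$ and $y := \Im z$, and to Taylor expand around $(x,y) = (0,0)$ to order $5$:
\begin{equation*}
F(z) = \sum_{j+k \le 5} \frac{1}{j!\,k!}\, (\partial_x^j \partial_y^k F)(0) \, x^j y^k + R(x,y),
\end{equation*}
where $R(x,y)$ is the Taylor remainder. Substituting $z = \zeta$ and $z = \zeta'$ and taking expectations, the moment-matching hypothesis (to order $r$) causes every polynomial term of total degree $\le r$ to contribute identically to $\E F(\zeta)$ and $\E F(\zeta')$, so the difference reduces to
\begin{equation*}
\E F(\zeta) - \E F(\zeta') = \sum_{r+1 \le j+k \le 5} \frac{(\partial_x^j\partial_y^k F)(0)}{j!k!}\bigl(\E(\Re\zeta)^j(\Im\zeta)^k - \E(\Re\zeta')^j(\Im\zeta')^k\bigr) + \E R(\zeta) - \E R(\zeta').
\end{equation*}
Each mixed moment in the sum is of size $O(n^{(j+k)/2 + O(\eps_1)})$ under the almost-sure bound $|\zeta|,|\zeta'| \le n^{1/2+\eps_1}$, so it will suffice to show that $|(\partial_x^j \partial_y^k F)(0)| \le n^{-(j+k) + O(\eps_1)}$ for $j+k \le 5$, with analogous uniform bounds controlling $R$. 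Summed against the $n^{(r+1)/2+O(\eps_1)}$ moment factor, this yields the claimed error of $n^{-(r+1)/2 + O(\eps_1)}$; in the improved case, each $n^{\eps_1}$ loss from differentiating $G$ is replaced by an $n^{-C\eps_1}$ gain, which, with $C$ sufficiently large, upgrades the error to $n^{-(r+1)/2 - \eps_1}$.

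The derivative bounds on $F$ would be established via Hadamard-type perturbation formulas. Writing $V_1 := e_p e_q^* + e_q e_p^*$ and $V_2 := \sqrt{-1}(e_p e_q^* - e_q e_p^*)$, one has
\begin{equation*}
\partial_x \lambda_{i_j} = \langle u_{i_j}, V_1 u_{i_j}\rangle,\qquad \partial_x P_{i_j} = \sum_{i \ne i_j}\frac{P_i V_1 P_{i_j} + P_{i_j} V_1 P_i}{\lambda_{i_j} - \lambda_i},
\end{equation*}
and similarly for $\partial_y$, with higher-order derivatives producing iterated sums of the same structure. By the delocalization hypothesis \eqref{eq:delocal}, each inner product $\langle u_i, V_\ell u_{i_j}\rangle$ at $i = i_j$ is of size at most $n^{-1+O(\eps_1)}$, while the denominators $|\lambda_{i_j} - \lambda_i|$ are controlled from below by the eigenvalue separation \eqref{eq:evaluesep}. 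Splitting the summation index $i$ into dyadic shells $2^\alpha \le |i - i_j| < 2^{\alpha+1}$ and combining \eqref{eq:delocal2} with \eqref{eq:evaluesep} converts each nested sum into a geometrically convergent series in $\alpha$, losing only $n^{O(\eps_1)}$ factors. Propagating this through up to five differentiations and combining with the chain-rule application of the bound $|\nabla^j G| \le n^{\eps_1}$ yields the required $n^{-(j+k)+O(\eps_1)}$ estimate on each Taylor coefficient.

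The principal obstacle is bookkeeping at the $4$th and $5$th orders: iterated perturbation formulas produce many terms involving products of several eigenvector inner products and several resolvent-type denominators, and one must verify that each term is absorbed by the dyadic summation argument without catastrophic losses. A secondary subtle point is controlling the remainder $R(x,y)$ uniformly over $(x,y)$ with $x^2+y^2 \le n^{1+2\eps_1}$ rather than merely at the origin; this requires the good-configuration bounds to hold along the entire line segment from $A(0)$ to $A(z)$, which is precisely why the hypotheses \eqref{eq:evaluesep}--\eqref{eq:delocal2} are imposed uniformly in $z$ across the disk (and along a grid of resolution $n^{-C_1}$, which by a continuity/deterministic grid argument suffices to conclude the same uniform bounds at every $z$). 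With these uniform estimates in hand, the Taylor-plus-moment-matching calculation runs essentially as in \cite{TVuniv}; the external source $D_n$ enters only through the underlying reference matrix $A(0)$ and does not affect the perturbation-theoretic estimates directly.
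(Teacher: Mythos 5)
Your sketch reproduces the Taylor-expansion/Lindeberg-swapping argument of Tao and Vu: expand $F$ to fifth order in $(\Re z,\Im z)$, use moment matching to kill terms of total degree at most $r$, and bound the remaining Taylor coefficients by iterating Hadamard perturbation formulas with the good-configuration hypotheses \eqref{eq:evaluesep}--\eqref{eq:delocal2} controlling the dyadic sums. The paper itself gives no proof of Proposition \ref{prop:replacement} --- it imports the statement verbatim from \cite{TVuniv}, whose proof is precisely the one you outline --- so your approach is essentially the same as the paper's (cited) one.
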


We will verify that the good configuration property holds with overwhelming probability.  

\begin{proposition}[Good configurations occur very frequently] \label{prop:config}
Let $\eps, \eps_1 > 0$ be small parameters, and $C,C_1,k\geq 1$.  Let $M_n = (\zeta_{ij})_{1 \leq i,j \leq n}$ satisfy condition {\bf C0}.  Let $\{D_n\}_{n \geq 1}$ satisfy condition {\bf C1}.  Assume the density $\rho$ satisfies condition {\bf (A)} and let $ i_1 < \cdots < i_k$ be in the $(\eps,n)$-bulk of $\rho$.  Let $1 \leq p,q \leq n$ and let $e_1, \ldots, e_n$ be the standard basis of $\C^n$.  Let $A(0) = \sqrt{n} M_n(0) + n D_n$ where $M_n(0)$ is the matrix $M_n$ with $\zeta_{pq} = \zeta_{qp} = 0$ and $|\zeta_{ij}| \leq \log^{C} n$ for all $1 \leq i,j \leq n$.  Then $A(0), e_p, e_q$ obey the Good configuration Condition in Proposition \ref{prop:replacement} for $i_1, \ldots, i_k$ and with the indicated value of $\eps_1,C_1$ with overwhelming probability.
\end{proposition}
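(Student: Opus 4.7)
The plan is to choose a polynomial-size discretisation of the target disc, verify the three good-configuration conditions for $A(z)$ with overwhelming probability at each net point, and then close the loop with a union bound. Concretely, let $\mathcal{N}$ denote the set of $z \in \mathbb{C}$ with $|z| \leq n^{1/2+\eps_1}$ whose real and imaginary parts are integer multiples of $n^{-C_1}$; then $|\mathcal{N}| = n^{O(1)}$, so since overwhelming probability is stable under union bounds over polynomially many events, it suffices to fix $z \in \mathcal{N}$ and verify \eqref{eq:evaluesep}--\eqref{eq:delocal2} with overwhelming probability. For such $z$, the matrix $A(z)/n$ has the form $\frac{1}{\sqrt{n}}\tilde M_n + D_n$, where $\tilde M_n$ differs from $M_n(0)$ only in the entries $(p,q)$ and $(q,p)$, now set to the deterministic values $z/\sqrt{n}$ and $\bar z/\sqrt{n}$. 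Since $|z|/\sqrt{n} \leq n^{\eps_1}$, every entry of $\tilde M_n$ is bounded by $K := n^{\eps_1}$, and (for $\eps_1$ small) this satisfies the hypotheses of Theorems \ref{thm:concentration} and \ref{thm:eigenvectors}.

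For the eigenvalue separation \eqref{eq:evaluesep}, I would combine Theorem \ref{thm:concentration} with Lemma \ref{lemma:epsbulk}. The latter gives $\rho \geq c_\eps > 0$ on a fixed neighbourhood of $\lambda_{i_j}(A(z))/n$, which itself lies in that neighbourhood with overwhelming probability. The former then implies that any interval of length $K/n$ with $K \geq \log^{22} n$ centred at $\lambda_{i_j}(A(z))/n$ contains at most $(c_\eps^{-1} + o(1))K$ eigenvalues of $A(z)/n$; taking $K$ slightly below $c_\eps|i-i_j|$ forces $|\lambda_i(A(z)) - \lambda_{i_j}(A(z))| \geq c_\eps|i-i_j|/2$ whenever $|i-i_j| \geq n^{\eps_1}$, which is much stronger than the required $n^{-\eps_1}|i-i_j|$.

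For the delocalization bounds, Theorem \ref{thm:eigenvectors} would immediately yield \eqref{eq:delocal}, since $|u_{i_j}(A(z))^\ast e_p|,|u_{i_j}(A(z))^\ast e_q| \leq n^{-1/2}\log^{O(1)} n \leq n^{-1/2+\eps_1}$ for $n$ large. For the dyadic estimate \eqref{eq:delocal2}, I would split on $\alpha$. When $2^\alpha \leq n^{1-2\eps_1}$, every index $i$ with $2^\alpha \leq |i-i_j| < 2^{\alpha+1}$ still lies in the $(\eps/2, n)$-bulk of $\rho$ for $n$ large, so Theorem \ref{thm:eigenvectors} gives $|u_i(A(z))^\ast e_p|^2 \leq n^{-1}\log^{O(1)}n$ and Pythagoras yields
\[ \|P_{i_j, \alpha}(A(z)) e_p\|^2 = \sum_{2^\alpha \leq |i - i_j| < 2^{\alpha+1}} |u_i(A(z))^\ast e_p|^2 \leq 2^{\alpha+1} n^{-1}\log^{O(1)}n \leq (2^{\alpha/2}n^{-1/2+\eps_1})^2. \]
When $2^\alpha > n^{1-2\eps_1}$, the trivial bound $\|P_{i_j,\alpha}(A(z)) e_p\| \leq 1 \leq 2^{\alpha/2}n^{-1/2+\eps_1}$ already suffices. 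Only $O(\log n)$ values of $\alpha$ arise, so a further union bound over $\alpha$ is harmless.

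The delicate point is justifying that Theorems \ref{thm:concentration} and \ref{thm:eigenvectors}, stated for matrices with all entries random, apply to $A(z)$ despite its two deterministic entries. I expect this to be the main obstacle, resolved via two standard observations: first, a Hermitian rank-two perturbation shifts eigenvalue indices by at most two (Cauchy interlacing), so the ESD counting in Theorem \ref{thm:concentration} is unaffected up to an $O(1)$ additive error; second, the projection and quadratic-form lemmas (Lemmas \ref{lemma:proj} and \ref{lemma:tail}) that drive the delocalization argument sum over indices disjoint from the modified pair, so the modification is invisible to those estimates. Alternatively, one may establish the three conditions for $A(0)$ first and transfer them to $A(z)$ via Theorem \ref{thm:gap} together with standard eigenvector perturbation theory, using that $\|A(z) - A(0)\| = O(n^{\eps_1})$ is much smaller than the typical bulk gaps of $A(z)$.
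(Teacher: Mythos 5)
Your proposal follows essentially the same route as the paper: fix a net point $z$ and apply a union bound, invoke Theorem~\ref{thm:concentration} (with Lemma~\ref{lemma:epsbulk}) for eigenvalue separation, Theorem~\ref{thm:eigenvectors} for \eqref{eq:delocal}, and a Pythagoras argument for \eqref{eq:delocal2} split according to whether the contributing indices remain in the $(\eps/2,n)$-bulk, falling back to the trivial bound $\|P_{i_j,\alpha}e_p\|\leq 1$ when $2^\alpha$ is of order $n$. One minor correction: your first justification for handling the deterministic entry, namely taking $K:=n^{\eps_1}$, does \emph{not} satisfy the hypothesis $K\ll\log^{O(1)}n$ of Theorem~\ref{thm:eigenvectors}; the relevant observation is rather the one you give afterward, that the proof of that theorem (via Lemmas~\ref{lemma:proj} and~\ref{lemma:tail}, applied to minors) is insensitive to the pair of modified entries, which is precisely what the paper leaves implicit when it directly cites Theorem~\ref{thm:eigenvectors}.
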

\begin{proof}
By the union bound it suffices to fix $1 \leq j \leq k$ and $|z| \leq n^{1/2+\eps_1}$ whose real and imaginary parts are multiples of $n^{-C_1}$.  By a further application of the union bound and Theorem \ref{thm:concentration}, condition \eqref{eq:evaluesep} holds with overwhelming probability for every $1 \leq i \leq n$ with $|i - i_j| \geq n^{\eps_1}$.  Condition \eqref{eq:delocal} holds with overwhelming probability by Theorem \ref{thm:eigenvectors}.  A similar argument using Pythagoras' theorem gives \eqref{eq:delocal2} with overwhelming probability, unless the eigenvalues $\lambda_i(A(z))$ contributing to \eqref{eq:delocal2} are indexed by $i$ not contained in the $(\eps/2,n)$-bulk of $\rho$.  By Lemma \ref{lemma:epsbulk} a contribution from outside the bulk would imply that $2^\alpha \gg_\eps n$, in which case \eqref{eq:delocal2} follows from the crude bound $\|P_{i_j, \alpha}(A(z))e_p \|, \|P_{i_j,\alpha}(A(z))e_q\| \leq 1$.   
\end{proof}

Using Proposition \ref{prop:config}, the proof of Theorem \ref{thm:4moment} repeats the proof of \cite[Theorem 15]{TVuniv} in \cite[Section 3]{TVuniv} almost exactly.  Only the following changes have to be made:
\begin{itemize}
\item All references to \cite[Theorem 19]{TVuniv} need to be replaced with Theorem \ref{thm:gap}.
\item All references to \cite[Proposition 46]{TVuniv} need to be replaced with Proposition \ref{prop:replacement}.
\item All references to \cite[Proposition 48]{TVuniv} need to be replaced with Proposition \ref{prop:config}.
\item The reference to the bulk in the proof of \cite[Lemma 49]{TVuniv} needs to be replaced with the $(\eps,n)$-bulk of $\rho$.
\end{itemize}

\end{document}